\patchcmd{\thebibliography}{\section*{\refname}}{}{}{}
\newtheorem{theorem}{Theorem}[section]
\newtheorem{corollary}{Corollary}[theorem]
\newtheorem{lemma}[theorem]{Lemma}
\newtheorem{definition}[theorem]{Definition}
\newtheorem{example}[theorem]{Example}
\newtheorem{remark}[theorem]{Remark}
\let\oldbibliography\thebibliography
\renewcommand{\thebibliography}[1]{%
  \oldbibliography{#1}%
  \setlength{\itemsep}{-1.5mm}%
}
\def\R{\mathbb{R}}
\def\N{\mathbb{N}}
\def\P{\mathbb{P}}
\def\E{\mathbb{E}}
\newcommand{\be}{\begin{equation}}
\newcommand{\ee}{\end{equation}}
\newcommand{\bea}{\begin{eqnarray}}
\newcommand{\eea}{\end{eqnarray}}
\newcommand{\beann}{\begin{eqnarray*}}
\newcommand{\eeann}{\end{eqnarray*}}
\newcommand{\benn}{\begin{equation*}}
\newcommand{\eenn}{\end{equation*}}
\newcommand{\cB}{{\mathcal B}}  
\newcommand{\cD}{{\mathcal D}}  
\newcommand{\cL}{{\mathcal L}}  
\newcommand{\cS}{{\mathcal S}}  
\title{Action convergence of general hypergraphs and tensors}
\author{Giulio Zucal\thanks{giulio.zucal@mis.mpg.de}}
\affil[1]{Max Planck Institute for Mathematics in the Sciences, Leipzig, Germany}
\date{\today}
\begin{document}
\maketitle

\begin{abstract}

Action convergence provides a limit theory for linear bounded operators $A_n:L^{\infty}(\Omega_n)\longrightarrow L^1(\Omega_n)$ where $\Omega_n$ are potentially different probability spaces. This notion of convergence emerged in graph limits theory as it unifies and generalizes many notions of graph limits. We generalize the theory of action convergence to sequences of multi-linear bounded operators $A_n:L^{\infty}(\Omega_n)\times \ldots \times L^{\infty}(\Omega_n)\longrightarrow L^1(\Omega_n)$. Similarly to the linear case, we obtain that for a uniformly bounded (under an appropriate norm) sequence of multi-linear operators, there exists an action convergent subsequence. Additionally, we explain how to associate different types of multi-linear operators to a tensor and we study the different notions of convergence that we obtain for tensors and in particular for adjacency tensors of hypergraphs. We obtain several hypergraphs convergence notions and we link these with the hierarchy of notions of quasirandomness for hypergraph sequences. This convergence also covers sparse and inhomogeneous hypergraph sequences and it preserves many properties of adjacency tensors of hypergraphs. Moreover, we explain how to obtain a meaningful convergence for sequences of non-uniform hypergraphs and, therefore, also for simplicial complexes. Additionally, we highlight many connections with the theory of  dense uniform hypergraph limits (hypergraphons) and {\color{black}we conjecture the equivalence of this theory with a modification of multi-linear action convergence}.

    \vspace{0.2cm}
\noindent {\bf Keywords:}  Graph limits, Hypergraphs, Action convergence, Tensors, Higher-order interactions
    \vspace{0.2cm}
\noindent {\bf  Mathematics Subject Classification Number:}  05C65
\end{abstract}

\section{Introduction}

In the last 20 years, the study of complex networks has permeated many areas of social and natural sciences. Important examples are computer, telecommunication, biological, cognitive, semantic and social networks. In particular, in all of these areas, understanding large networks is a fundamental problem.

Network structures are usually modelled using graph theory to represent pairwise interactions between the elements of the network. However, for very large networks, such as the internet, the brain and social networks among others, exact information about the number of nodes and other specific features of the underlying graph is not available. For this reason, there is the need for a mathematical definition of synthetic structures containing only the relevant information for a very large graph. This is equivalent to assuming that the number of nodes is so big that the graph can be well approximated with a “graph-like” object with infinite number of nodes. This motivated the development of graph limits theory, the study of graph sequences, their convergence and their limit objects. In mathematical terms, one is interested in finding a metric on the space of graphs and a completion of this space with respect to this metric. This is a very active field of mathematics that connects graph theory with many other mathematical areas such as stochastic processes, ergodic theory, spectral theory and several branches of analysis and topology.  

From the rise of graph limits theory, two different cases have been mostly considered. The first case is the limits of dense graphs, i.e.\ when the number of edges of the graphs in the sequence is asymptotically proportional to the square of the number of vertices. This case, where the limit objects are called graphons (from graph functions), is now very well understood thanks to the contributions of L.\ Lovász, B.\ Szegedy, C.\ Borgs and J.\ Chayes among others  \cite{BORGS20081801,Lovsz2007SzemerdisLF, LOVASZ2006933}. The dense graph limit convergence is metrized by the cut-metric and is equivalent to the convergence of homomorphism densities. The completion of the set of all graphs in this metric is compact, i.e. every graph sequence has a convergent subsequence, which is a very useful property. A shortcoming of the dense graph limit theory is that it has not enough expressive power to study graph sequences in which the number of edges is sub-quadratic in the number of vertices. In fact, every sparse graph is considered to be similar to the empty graph in this metric. An important generalization of this theory are $L^p-$graphons \cite{LpGraphons1,Lpgraphon2}. The other case that has been well studied are graph sequences with uniformly bounded degree and the associated notion of convergence was introduced by I.\ Benjamini and O.\ Schramm \cite{BenjaminiLimit} and it has a stronger version called local-global convergence \cite{local-global1,Hatami2014LimitsOL}. The limits of such convergent sequences can be represented as objects called graphings. For a thorough treatment of these topics see the monograph by L.\ Lovász \cite{LovaszGraphLimits}. 

Unfortunately, for most applications, the really interesting case is the intermediate degree case, not covered by the previously presented theories. Real networks are usually sparse but not very sparse and heterogeneous. For this reason, the intermediate case attracted a lot of attention recently, see for example \cite{MarkovSpaces,KUNSZENTIKOVACS20191,KUNSZENTIKOVACS2022109284}.  In particular, in a recent work Á.\ Backhausz and B.\ Szegedy introduced a new functional analytic/measure theoretic notion of convergence \cite{backhausz2018action}, that not only covers the intermediate degree case but also unifies the graph limit theories previously presented. This notion of convergence is called action convergence and the limit objects for graph sequences are called graphops (from graph operators). More generally this is a notion of convergence for $P-$operators, i.e.\ linear bounded operators
$$
A:L^{\infty}(\Omega)\longrightarrow L^1(\Omega)
$$
where $\Omega$ is a probability space. As a matrix can be naturally interpreted as a $P-$operator, we obtain as a special case a notion of convergence for matrices. The notions of convergence for graphs are derived by associating to graphs (properly normalized) matrices, for example, adjacency matrices or Laplacian matrices. 
In this work we extend the notion of action convergence to multi-linear operators. More specifically, we consider multi-linear operators of the form 
$$
A:L^{\infty}(\Omega)^r\longrightarrow L^1(\Omega)
$$
where $\Omega$ is a probability space and $L^{\infty}(\Omega)^r=L^{\infty}(\Omega)\times\ldots\times L^{\infty}(\Omega)$ is the cartesian product of $L^{\infty}(\Omega)$ with itself $r$ times. We name such operators multi-$P-$operators. 

This convergence notion comes with an associated pseudo-metric $d_M$. We therefore say that two multi-$P-$operators $A$ and $B$ are isomorphic if $d_M(A,B)=0$. The space of classes of isomorphism of multi-$P-$operators equipped with $d_M$ is a metric space.

We obtain a compactness result for multi-$P-$operators analogous to the compactness result for the case of $P-$operators: Sequences of multi-$P-$operators $(A_n)_{n}$ that have a uniform bound $C>0$ on the quantity
$$
\|A_n\|_{p_1,\ldots,p_{r}\rightarrow q}=\sup_{0\neq f_1,\ldots, f_r\in L^\infty(\Omega_n)}\frac{\|A_n[f_1,\ldots,f_r]\|_q}{\|f_1\|_{p_1}\ldots \|f_r\|_{p_r}}\leq C
$$
for all $n\in \mathbb{N}$ have a convergent subsequence in the space of isomorphism classes of multi-$P-$operators equipped with the metric $d_M$.  Moreover,
$$
\|A\|_{p_1,\ldots,p_{r}\rightarrow q}\leq\lim_{n\rightarrow \infty}\|A_n\|_{p_1,\ldots,p_{r}\rightarrow q}\leq C.
$$
if the sequence is convergent with limit multi-$P-$operator $A$. 

We focus on using multi-linear action convergence to define meaningful convergence notions for tensors and hypergraphs.

\begin{definition}
Let $r,n\geq 2$. An $r$-th order $n$-dimensional \emph{tensor} $T$ consists of $n^r$ entries
\begin{equation*}
    T_{i_1,\ldots,i_r}\in \mathbb{R},
\end{equation*}
where $i_1,\dots,i_r\in[n]$.\newline 
The tensor $T$ is \emph{symmetric} if its entries are invariant under any permutation of their indices.\newline
\end{definition}
First of all, we explain how symmetric tensors can be associated with multi-$P-$operators in multiple ways. For example, for a $3-$rd order symmetric tensor 
$$
T_{i_1,i_2, i_3}
$$
we can consider the operator
$$
T_1[\mathrm{v},\mathrm{w}]=\sum^n_{i_1,i_2=1}T_{i_1,i_2, i_3}v_{i_1}w_{i_2}
$$
where $\mathrm{v}=(v_i)_i,\mathrm{w}=(w_i)_i\in \R^n$ are vectors or alternatively

$$
T_2[f,g]=\frac{1}{2}(\sum^n_{i_2=1}T_{i_1,i_2,i_3}f_{i_1,i_2}g_{i_2,i_3} +\sum^n_{i_2=1}T_{i_1,i_2,i_3}f_{i_3,i_2}g_{i_2,i_1})
$$
where $f=(f_{ij})_{ij},g=(g_{ij})_{ij}$ are symmetric matrices. These different choices of associating a multi-$P-$operator to a tensor give rise in general to different convergence notions for tensors. In the case of $3-$rd order symmetric tensors the second choice presented seems to be in many cases more appropriate. However, one can require action convergence of both multi-$P-$operators $(T)_1$ and $(T)_2$ associated to the tensor $T$ at the same time.

Recently, in network sciences, a lot of interest has been generated by higher-order interactions (interactions that are beyond pairwise) and the phenomena generated by them \cite{HypergraphsNetworks,HigerOrdIntBook,carletti2020dynamical,majhi2022dynamics,MHJ,HypergraphsDynamics,Bohle_2021,JOST2019870,JostMulasBook}. Hypergraphs are the natural mathematical/combinatorial structure to represent higher-order interactions. 

\begin{definition}
 An \emph{hypergraph} is a pair $H=(V,E)$ where $V=\{v_1,\ldots,v_n\}$ is the set of \emph{vertices}, $E=\{e_1,\ldots,e_m\}$ is the set of \emph{edges} and $\emptyset \neq e \subseteq V$ for each $e\in E$.\newline
 A hypergraph $H$ is $k$\emph{-uniform} if $|e|=k$ for every $e\in E$.
 \end{definition}

Limit theories for hypergraphs are much less developed than the ones for graphs due to the bigger combinatorial complexity and for this reason very limited to the uniform and dense hypergraph sequences case. The first contributions on hypergraph limits,\cite{hypergrELEK20121731,HypergraphsSzegedy2} by Elek and Szegedy used techniques from nonstandard analysis/model theory to define uniform and dense hypergraph limit objects. This approach using “ultralimits” is well explained in the recent book \cite{RandomneLimitTow} by Towsner. A more classical approach using “quotients” and regularity partitions obtaining the same type of limits has been developed by Zhao in  \cite{HypergraphonsZhao}. The limit objects of this convergence notions appeared earlier in the context of exchangeable arrays of random variables\cite{kallenberg1992symmetries, hoover1979relations, aldous1981representations, aldous2010exchangeability,austin2008exchangeable,diaconis2007graph}. For sparse uniform hypergraph sequences, a removal lemma is obtained in \cite{SparseHypLogicLimit} using again techniques from logic but no limit theory/convergence for sparse hypergraphs is developed to the best of our knowledge. Our hypergraphs convergence based on multi-linear action convergence instead is based on functional analytic and measure-theoretic techniques and can be applied to any hypergraph sequence, also for non-uniform, very sparse and heterogeneous hypergraphs sequences. 

To apply action convergence we associate to hypergraphs their adjacency tensor

\begin{definition}
Let $H=(V,E)$ be a hypergraph on $n$ nodes with largest edge cardinality $r$. The \emph{adjacency tensor} of $H$ is the $r$-th order $n$-dimensional tensor $A=A(H)$ with entries
\begin{equation*}
A_{i_1,\ldots,i_r}:=\begin{cases}
0 & \text{ if }\{v_{i_1},\ldots,v_{i_r}\}\notin E\\
1 &  \text{ if }\{v_{i_1},\ldots,v_{i_r}\}\in E.\\
\end{cases}
\end{equation*}
\end{definition}

(possibly multiplied by some normalizing constant) and as already explained we can associate a tensor with different multi-$P-$operators and therefore different convergence notions with a relationship between them. These different types of convergence are related to the different types of quasi-randomness for sequences of hypergraphs. In particular, we focus our attention on one notion of convergence obtained in such a way, which we consider being in many cases the most appropriate, and we compare it with the existing notion of convergence for dense hypergraphs (hypergraphon convergence). We underline many similarities in the two theories and we look at some motivating examples that bring us {\color{black} to conjecture that a modification of action convergence of the normalized adjacency tensor and hypergraphon convergence are equivalent. }

The generalization of action convergence to multi-linear operators allows us to study hypergraph limits and therefore to represent conveniently large hypernetworks with objects that we will call hypergraphops. Hypergraphops are symmetric and positivity-preserving multi-$P-$operators and hypergraphs are obviously special cases of hypergraphops. We show that the space of hypergraphops (with a uniform bound on some operator norm) is closed. In fact, symmetry and positivity of multi-$P-$operators are preserved under action convergence, i.e.\ the limit of an action convergent sequence of symmetric, respectively positivity-preserving, multi-$P-$operators is again symmetric, respectively positivity-preserving.\newline

{\color{black}We compare the action convergence metric with other norms and metrics in order to better understand this convergence. These comparisons allow us to give several examples of action convergent sequences of hypergraphs and their limits.  }\newline

We also study other possible tensors associated with hypergraphs and their associated action convergence. In particular, we present possible choices to obtain meaningful limit objects for inhomogeneous and non-uniform hypergraph sequences. In particular, to the best of our knowledge, we are the first to introduce a meaningful convergence for non-uniform hypergraphs. Covering the case of non-uniform hypergraphs, our limit theory gives us a convergence for simplicial complexes as a special case answering a question in \cite{Bobrowski2022}.

Generalising the results of \cite{backhausz2018action} is technically challenging as it requires us to use multi-linear operators and tensors instead of linear operators and matrices, for which there are fewer results. Furthermore, this generalisation also significantly complicates the associated notation. However, on a more conceptual level, the main challenge of understanding the limit objects of hypergraphs requires a deeper understanding of action convergence to which we contribute here.

\subsection*{Structure of the paper}

In Section \ref{SecNotation} we introduce the notation and basic definitions from functional analysis and probability theory. In Section \ref{SecActionConve} we briefly recall the theory of action convergence and in Section \ref{SecTensHyp} we introduce relevant notions for hypergraphs and tensors. In Section \ref{SecMultiActConv} we finally introduce the generalization of  action convergence to multi-linear operators and in Section \ref{SecConstrLimObj} we prove the main compactness result. Moreover, in Section \ref{SecPropLimObje} we study several properties of multi-linear action convergence and of the related limit objects. {\color{black} In Section \ref{SecNormsMulti} we compare the action convergence distance with other norms and distances for multi-linear operators.} In Section \ref{SecMultActTensHyp} and \ref{NonUnifHypSect} we investigate how action convergence for multi-linear operators can be specialized to tensors and hypergraphs in different ways and we study the different convergence notions obtained. In Section \ref{SectHypergraphons} we point out many relationships between  {\color{black}hypergraph convergence notions obtained from action convergence and hypergraphon convergence in the context of dense hypergraph sequences and we conjecture the equivalence of a modification of action convergence and hypergraphon convergence.}  

\section{Notation}\label{SecNotation}

In the following, we will denote with $(\Omega,\mathcal{F},\P)$ a standard probability space where $\mathcal{F}$ is a  $\sigma-$algebra and $\P$ is a probability measure on $(\Omega,\mathcal{F})$. We will denote with $\mathcal{P}(\Omega,\mathcal{F})$ or shortened $\mathcal{P}(\Omega)$ the set of probability measures on $(\Omega,\mathcal{F})$. Moreover, we will indicate the expectation of a real-valued measurable function (or in probabilistic language a random variable) $f$ on $(\Omega,\mathcal{F},\P)$ with $\E[f]$. We indicate the (possibly infinite) $L^p-$norm of a real-valued measurable function $f$ with  
$$
\|f\|_p=\left(\int_{\Omega}|f(\omega)|^pd\P(\omega)\right)^{1/p}=\left(\E[|f|^p]\right)^{1/p}.
$$ If a measurable function $f$ has finite $L^p-$norm we say that $f$ is $p-$integrable (or has finite $p-$moment). We denote with $L^p(\Omega,\mathcal{F},\P,)$ the usual Banach space of the real-valued measurable $p-$integrable functions (identified if they are equal almost everywhere) on $(\Omega,\mathcal{F},\P)$ equipped with the usual $L^p-$norm or equivalently, in probabilistic language, the space of random variables with finite $p-$moment. We will use a lot of times the shortened notations $L^p(\Omega)$ or $L^p$ when there is no risk of confusion. For a set $S\subset \R$ we will also denote with $L_S^p(\Omega)$ the space of the $p-$integrable random variables taking values in $S$.\newline 

For a linear operator 
$$\begin{aligned}
    A:L^p(\Omega, \mathcal{F},\P)&\longrightarrow L^q(\Omega,\mathcal{F},\P)\\
& f\mapsto Af
\end{aligned}$$ we define the $(p,q)-$operator norm $$
\|A\|_{p\rightarrow q}=\sup_{f\in L^p,\ f\neq 0} \frac{\|Af\|_q}{\|f\|_p}.
$$
The linear operator $A$ is said to be bounded (or equivalently continuous) if the operator norm is finite. We denote with $\cB_{p,q}$ the Banach space of linear bounded operators from $L^p(\Omega)$ to $L^q(\Omega)$ equipped with the $(p,q)-$operator norm.\newline
A $k-$dimensional random vector is a measurable function $\mathbf{f}$ from a probability space $(\Omega,\mathcal{F},\P)$ to $\R^k$ and we can naturally represent it as $$
\mathbf{f}=(f_1,\ldots,f_k)
,$$ where $f_1,\ldots,f_k$ are real-valued random variables on $(\Omega,\mathcal{F},\P,)$. Therefore, a real-valued random variable is a $1-$dimensional random vector. For a $k-$dimensional random vector $\mathbf{f}$, we denote with $\mathcal{L}(\mathbf{f})=\mathcal{L}(f_1,\ldots,f_k)$ its distribution (or law), that is the measure on $\R^k$ defined as  
$$
\mathcal{L}(\mathbf{f})(A)=\P(\mathbf{f}^{-1}(A))
$$
where $A$ is a set in the Borel $\sigma$-algebra of $\R^k$. \newline
Given $n\in\N$,
we denote by $[n]$ the set $\{1,\ldots,n\}$. In the case of a finite probability space, the law of a random vector has a particularly easy representation. We show this with the following example that will be important in the next sections. 
\begin{example}
Let's consider the probability space $([n],\mathcal{D},\mathcal{U})$ where $\mathcal{U}$ is the uniform probability measure on $[n]$ and with $\mathcal{D}$ the discrete $\sigma-$algebra on $[n]$. Then for any $k-$dimensional random vector $$
\mathbf{f}=(f_1,\ldots,f_k)$$
the law $\mathcal{L}(\mathbf{f})$ is 
$$
\mathcal{L}(\mathbf{f})=\frac{1}{n}\sum^n_{i=1}\delta_{(f_1(i),\ldots,f_k(i))}
$$
where $\delta_{(x_1,\ldots,x_k)}$ is the Dirac measure centered in $(x_1,\ldots,x_k)\in \R^k$.

\end{example}

\section{Action convergence}\label{SecActionConve}
We briefly recall here, following \cite{backhausz2018action}, the notion of action convergence of operators, a very general notion of convergence for operators acting on $L^p$ spaces defined on different probability spaces, introduced in the context of graph limit theory. Other related works to this limit notion are \cite{ArankaAction2022,MeasTheorActionZucal}.\newline
We start giving the following definition.
\begin{definition}
A $P-$\emph{operator} is a linear bounded operator $$\begin{aligned}
     A:L^\infty(\Omega, \mathcal{F},\P)&\longrightarrow L^1(\Omega,\mathcal{F},\P)
\end{aligned}
$$
for any probability space $(\Omega, \mathcal{F},\P)$.\newline
A $P-$operator $A$ is \emph{acting} on the probability space $(\Omega,\mathcal{F},\P)$ if the $L^1$ and $L^\infty$ spaces are defined on $(\Omega, \mathcal{F},\P)$. We denote the set of all $P-$operators with $\mathcal{B}$ and the set of all $P-$operators acting on $(\Omega,\mathcal{F},\P)$ with $\mathcal{B}(\Omega, \mathcal{F},\P)$.
\end{definition}

We give here an example that will be important in the following.
\begin{example}
A matrix $A=(A_{i,j})_{i,j\in [n]}$ can be interpreted as a $P-$operator acting on the probability space $\Omega=([n],\mathcal{D},\mathcal{U})$ where we denoted with $\mathcal{U}$ the uniform probability measure on $[n]$ and with $\mathcal{D}$ the discrete $\sigma$-algebra on $[n]$. In fact, for $\mathrm{v}=(v_i)_{i\in [n]}\in \R^n\cong L^{\infty}(\Omega)\cong L^1(\Omega)$
$$
A:L^{\infty}(\Omega)\longrightarrow L^1(\Omega)
$$

$$
(A\mathrm{v})_i=\sum^n_{j=1}A_{ij}v_j.
$$ In particular, a graph can be associated to its adjacency matrix (or its Laplacian matrix) and therefore it can be interpreted as a $P-$operator.
\end{example}

We would now like to introduce a metric on $P-$operators possibly acting on different probability spaces. This means that we would like to equip $\mathcal{B}$ with a metric and, therefore, with a natural notion of convergence. In reality, we will equip $\mathcal{B}$ with a pseudo-metric and then quotient over equivalent classes (elements at distance $0$) of $\mathcal{B}$ to obtain a proper metric space.  We will see that for graphs (adjacency matrices of graphs) this identification of elements is exactly what we want as it identifies isomorphic graphs.\newline
By definition, an element $f$ of $L^\infty(\Omega,\mathcal{F},\P)$ is a real-valued bounded random variable on $(\Omega,\mathcal{F},\P)$. Therefore, for a $P-$operator $A$ acting on $(\Omega,\mathcal{F},\P)$, 
$$
Af\in L^1(\Omega,\mathcal{F},\P) $$is, by definition, a real-valued random variable with finite expectation. Therefore, for functions $f_1,\ldots,f_k\in L^{\infty}(\Omega)$ we can consider the $2k-$dimensional random vector
$$
(f_1,Af_1,\ldots , f_k,Af_k)
$$and in particular its distribution $\mathcal{L}(f_1,Af_1,\ldots , f_k,Af_k)\in \mathcal{P}(\R^{2k})$. For a $P-$operator $A$, if a measure $\mu\in\mathcal{P}(\R^{2k})$ is such that $$
\mu= \mathcal{L}(f_1,Af_1,\ldots , f_k,Af_k)
$$
for some functions $f_1,\ldots, f_k\in L^{\infty}(\Omega)$ we say that $\mu$ is a measure generated by $A$ through $f_1,\ldots, f_k$.
We now define the set of measures generated by $A$. For reasons that will be clear in the following, it will be convenient to allow in our sets only measures generated by functions in $L_{[-1,1]}^{\infty}(\Omega)$, i.e. functions taking values between $+1$ and $-1$ almost everywhere. Therefore, we define the $k-$\emph{profile} of $A$,  $\cS_k(A)$, as the set of measures generated by $A$ through functions in  $L_{[-1,1]}^{\infty}(\Omega)$, i.e.\
\begin{equation}\label{EqDefKprofPoper}
\cS_k(A)=\bigcup_{f_1,\ldots, f_k\in L_{[-1,1]}^{\infty}(\Omega)}\{\mathcal{L}(f_1,Af_1,\ldots , f_k,Af_k)\}.
\end{equation}

This is a set of measures. To compare sets of measures, first of all, we will need  a metric on the space of measures.  For this reason, we recall the following well-known metric:

\begin{definition}[Lévy-Prokhorov metric]\label{LevyProk}
 The \emph{Lévy-Prokhorov Metric} $d_{\mathcal{LP}}$ on the space of probability measures $\mathcal{P}\left(\mathbb{R}^{k}\right)$ is for $\eta_1,\eta_2\in \mathcal{P}\left(\mathbb{R}^{k}\right)$
$$\begin{aligned}
d_{\mathcal{LP}}\left(\eta_{1}, \eta_{2}\right)=&\inf \left\{\varepsilon>0: \eta_{1}(U) \leq \eta_{2}\left(U^{\varepsilon}\right)+\varepsilon \text{ and } \right.\\
&\left.\eta_{2}(U) \leq \eta_{1}\left(U^{\varepsilon}\right)+\varepsilon  \text{ for all } U \in \mathcal{U}_{k}\right\},
\end{aligned}$$

where $\mathcal{U}_{k}$ is the Borel $\sigma$-algebra on $\mathbb{R}^{k}$, $U^{\varepsilon}$ is the set of points that have Euclidean distance smaller than $\varepsilon$ from $U$.
\end{definition}

The above metric metrizes the weak/narrow convergence for measures. \newline
We want to be able to compare sets of measures. We, therefore, introduce the following (pseudo-)metric on the sets of measures.

\begin{definition}[Hausdorff metric]\label{DefHausdorffDist}
 Given $X, Y\subset \mathcal{P}\left(\mathbb{R}^{k}\right)$, their \emph{Hausdorff distance} 
$$
d_{H}(X, Y):=\max \left\{\sup _{x \in X} \inf _{y \in Y} d_{\mathcal{LP}}(x, y), \sup _{y \in Y} \inf _{x \in X} d_{\mathcal{LP}}(x, y)\right\}.
$$

\end{definition}
Note that $d_{H}(X, Y)=0$ if and only if $\operatorname{cl}(X)=\operatorname{cl}(Y)$, where $\operatorname{cl}$ is the closure in $d_{\mathcal{LP}} .$ It follows that $d_{H}$ is a pseudometric for all subsets in $\mathcal{P}\left(\mathbb{R}^{k}\right)$, and it is a metric for closed sets.

Moreover, observe that by definition, the Lévy-Prokhorov distance between probability measures is upper-bounded by $1$ and, therefore, the Hausdorff metric for sets of measures is upper-bounded by  $1$.\newline

We are now ready to define the pseudo-metric we are interested in.
Consider two $P-$operators
$$
A: L^{\infty}(\Omega_1,\mathcal{F}_1,\P_1)\rightarrow L^1(\Omega_1,\mathcal{F}_1,\P_1)
$$and
$$
B: L^{\infty}(\Omega_2,\mathcal{F}_2,\P_2)\rightarrow L^1(\Omega_2,\mathcal{F}_2,\P_2).
$$
\begin{definition}[Metrization of action convergence]\label{DefActMetric} For the two $P$-operators $A, B$ the \emph{action convergence metric} is
$$
d_{M}(A, B):=\sum_{k=1}^{\infty} 2^{-k} d_{H}\left(\cS_{k}(A), \cS_{k}(B)\right).
$$
\end{definition}

Moreover, we will say that a sequence of $P$-operators $\left\{A_i \in \mathcal{B}\left(\Omega_i\right)\right\}_{i=1}^{\infty}$ is a Cauchy sequence if the sequence is Cauchy in $d_M$. \newline

The metric $d_M$ has some nice compactness properties. In particular, the following theorem gives us that sets of $P-$operators with uniformly bounded $(p,q)-$norm with $p\neq \infty$ are pre-compact in the action convergence metric.

\begin{theorem}[Theorem 2.14 in \cite{backhausz2018action}]\label{THMCompBack} Let $p\in [1,\infty)$ and $q\in [1,\infty]$. Let $\{A_i\}_{i=1}^\infty$ be a Cauchy sequence of $P$-operators with uniformly bounded $\|\cdot\|_{p\to q}$ norms. Then there is a $P$-operator $A$ such that $\lim_{i\to\infty} d_M(A_i,A)=0$, and $\|A\|_{p\to q}\leq\sup_{i\in\mathbb{N}}\|A_i\|_{p\to q}$. Therefore, the sequence $\{A_i\}_{i=1}^\infty$ is action convergent. 
\end{theorem}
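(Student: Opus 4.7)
My plan is to construct a limit $P$-operator $A$ by combining Hausdorff convergence of the $k$-profiles with an ultraproduct/Loeb-space construction of the underlying probability space, and then match the profiles and norm bound. First I would use the Cauchy hypothesis together with completeness of $(\mathcal{P}(\mathbb{R}^{2k}),d_{LP})$ and completeness of the induced Hausdorff metric on closed subsets (as recorded in the remark above) to extract, for each $k\in\mathbb{N}$, a closed limit set $X_k\subset\mathcal{P}(\mathbb{R}^{2k})$ with $d_H(S_k(A_i),X_k)\to 0$. The uniform $(p,q)$-bound with $p<\infty$ plays a crucial tightness role here: for $f\in L^\infty_{[-1,1]}(\Omega_i)$ the estimate $\|A_i f\|_q\le C\|f\|_p\le C$ (valid because $\Omega_i$ is a probability space and $p<\infty$) gives uniformly bounded $q$-th moments on the ``output'' marginals in every measure in $S_k(A_i)$, and this moment control passes to the limit sets $X_k$.

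For the construction, I would fix a non-principal ultrafilter $\omega$ on $\mathbb{N}$ and take $(\Omega,\mathcal{F},\P)$ to be the Loeb probability space attached to the ultraproduct of $\{(\Omega_i,\mathcal{F}_i,\P_i)\}$, which is a standard probability space. Any $f\in L^\infty(\Omega)$ is represented by a bounded sequence $(f_i)\in\prod_i L^\infty(\Omega_i)$, and I would define
$$Af \;:=\; \omega\text{-}\lim A_i f_i,$$
where the ultralimit makes sense as an element of $L^q(\Omega)\subset L^1(\Omega)$ because $\|A_i f_i\|_q\le C\|f_i\|_p\le C\|f\|_\infty$ uniformly in $i$. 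Linearity of $A$ is inherited from linearity of the $A_i$, so $A$ is a $P$-operator. The bound $\|A\|_{p\to q}\le \sup_i\|A_i\|_{p\to q}$ would follow from lower semicontinuity of the $L^q$-norm under ultralimits applied to $L^\infty$ inputs, then extended to $L^p$ by density (using $p<\infty$). For the profile identity $S_k(A)=X_k$, given $f_1,\dots,f_k\in L^\infty_{[-1,1]}(\Omega)$ with representing sequences in $L^\infty_{[-1,1]}(\Omega_i)$, the joint distribution $\mathcal{L}(f_1,Af_1,\dots,f_k,Af_k)$ is the weak-$\omega$-limit of $\mathcal{L}(f_{1,i},A_i f_{1,i},\dots,f_{k,i},A_i f_{k,i})\in S_k(A_i)$ by the continuous mapping/portmanteau theorem together with the tightness from Step 1, and therefore lies in $X_k$; conversely any $\mu\in X_k$ is approximated by $\mu_i\in S_k(A_i)$ whose witnessing functions assemble via the ultraproduct into $f_j\in L^\infty_{[-1,1]}(\Omega)$ realizing $\mu$ as an element of $S_k(A)$.

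The main obstacle I foresee is the construction step itself: showing that the ultralimit genuinely yields an operator valued in $L^1(\Omega)$ that depends linearly and measurably on $f$, rather than merely a compatible family of random variables reproducing the finite-dimensional marginals $X_k$. This is precisely where the hypothesis $p<\infty$ becomes indispensable: on a probability space it gives $\|f_i\|_p\le \|f_i\|_\infty$ and hence a uniform $L^q$-bound on $A_i f_i$ for $f_i$ bounded, which supplies the uniform integrability required to define $Af$ as a bona fide $L^1$ element and to apply lower semicontinuity of $\|\cdot\|_q$ under ultralimits. Without this the companion lemma only delivers Cauchy subsequences, not a limit $P$-operator. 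A secondary technical point I would have to handle carefully is the compatibility of the ultraproduct with the dense collections of witnesses used to realise $X_k$: given approximants $\mu_i\to\mu$ with witnesses $(f_{j,i})_j$, I need to verify via a diagonal/extraction argument that the ultraproduct elements $f_j=[(f_{j,i})]_\omega$ indeed satisfy $\mathcal{L}(f_1,Af_1,\dots,f_k,Af_k)=\mu$, closing the loop and yielding full action convergence.
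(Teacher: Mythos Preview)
Your ultraproduct/Loeb-space route is genuinely different from the construction the paper uses (which specializes the argument of Section~\ref{SecConstrLimObj} to $r=2$, reproducing the original Backhausz--Szegedy proof). The paper avoids ultrafilters entirely: it fixes a countable dense subset $X_k'\subset X_k$ for each $k$, builds a countable free semigroup $F=F(G,L)$ whose generators are witnesses for the points of the $X_k'$ and whose operators are the nonlinear maps $v\mapsto h_{y,z}(A_i v)$ with $h_{y,z}$ bounded continuous ``tent'' functions, realizes each word $w\in F$ as a concrete $v_{i,w}\in L^\infty(\Omega_i)$, and takes $\Omega=\R^{F\times\{1,2\}}$ with the weak-limit law $\kappa$ of the joint distribution of $(v_{i,f},A_i v_{i,f})_{f\in F}$. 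The operator is then defined on coordinate functions by $A\pi_{(f,1)}:=\pi_{(f,2)}$ and extended by density in $L^p$; closure under products and under the tent operators is precisely what makes the span of coordinate functions dense, and this density step is where $p<\infty$ enters. Your approach buys conceptual economy (the operator and the profile matching come almost for free from the Loeb construction), while the paper's buys an explicit limit on a Polish space using only classical weak-convergence and density arguments.

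Two points in your sketch need tightening before it becomes a proof. First, it is not true that ``any $f\in L^\infty(\Omega)$ is represented by a bounded sequence $(f_i)$'': Loeb-measurable functions are generally not internal, so you must invoke the lifting theorem to produce a representing sequence, and then verify (this is indeed where $p<\infty$ is used) that two liftings $(f_i),(f_i')$ of the same $f$ satisfy $\|f_i-f_i'\|_p\to 0$ along $\omega$ and hence yield the same $Af$. Second, Loeb spaces are typically not standard Borel; if standardness of the underlying probability space is required you need an additional transfer via the measure algebra.
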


Importantly, we can relate action convergence to the notions of convergence arising in the cases of dense graph sequences convergence (cut-metric, graphons) and uniformly bounded degree graph sequences convergence (local-global convergence). 

In particular, consider the sequence of adjacency matrices $A_n$ of graphs $G_n$, and let $v_n$ be the number of vertices of $G_n$. Then,  \begin{itemize}
    \item The action convergence of the sequence $$
   \frac{A_n}{v_n} 
    $$
    coincides with graphon convergence \cite[Theorem 8.2 and Lemma 8.3]{backhausz2018action}
    \item The action convergence of the sequence $$
    A_n
    $$
    coincides with local-global convergence \cite[Theorem 9.2]{backhausz2018action}.\end{itemize}
    We refer to \cite{backhausz2018action} for more details.

A lot of properties of matrices and graphs can be directly translated into the language of $P-$operators: self-adjointness, positivity, positivity-preservation and regularity, see Definition 3.1 in \cite{backhausz2018action}.

All these properties carry over in the limit if we assume a uniform bound on the $(p,q)-$norm with $p,q\notin\{1,\infty\}$. For the rigorous results, see Lemma 3.2 in \cite{backhausz2018action}, Proposition 3.4 in\cite{backhausz2018action}, Corollary 2.2 in \cite{ArankaAction2022}, where counterexamples are provided for when the assumptions of the Corollary 2.2 are not satisfied, and Proposition 3.4 in \cite{backhausz2018action}. 

In particular, the following special class of $P-$operators is important in graph limits theory. 
\begin{definition}
A positivity-preserving and self-adjoint $P-$operator
is called a \emph{graphop}.
\end{definition}

A graphop is the natural translation in the language of $P-$operators of the adjacency matrix of a graph when we consider the uniform probability measure on the nodes.

\section{Tensors and hypergraphs}\label{SecTensHyp}

We start by giving some preliminary definitions and notations on tensors and hypergraphs.\newline 

We indicate a vector in $\R^n$ by $\textbf{x}=(x_1,\dots,x_n)$. For a set $E$ we denote with $|E|$ the cardinality of $E$ and with $2^E$ the powerset of $E$.\newline
\begin{definition}
The \emph{symmetrization} of an  $r-$th order tensor $T$ is the $r$-th order tensor $Sym(T)$ where
$$
(Sym(T))_{i_1\ldots,i_r}=\frac{1}{r!}\sum_{\sigma \in \Sigma}T_{i_{\sigma(1)},\ldots,i_{\sigma(r)}},
$$
where $\Sigma$ is the set of all permutations of $[r]$.
\end{definition}

It will be very convenient in the following to consider a tensor as many different possible operators. 
\begin{definition}\label{DefAction}
For an $r-$th order $n-$dimensional symmetric tensor $T$ and for $s\in [r-1]$, the \emph{$s-$action} of $T$ on the $s-$th order $n-$dimensional symmetric tensors $f^{(1)},\ldots, f^{(r-1)}$ is the operation 
\begin{equation*}
\begin{aligned}
&(T[f^{(1)},\ldots,f^{(r-1)}])_{i_1,\ldots,i_s}=\\
Sym(\sum^n_{j_1,\ldots,j_{r-s}=1}T_{j_1,\ldots,j_{r-s},i_1,\ldots,i_{s}}&f^{(1)}_{i_2,\ldots,i_{s},j_1}f^{(2)}_{i_3,\ldots,i_{s},j_1,j_2}\ldots f^{(r-2s+1)}_{j_{r-2s+2},\ldots,j_{r-s},i_1}\ldots f^{(r-1)}_{j_{r-s},i_1,\ldots,i_{s-1}}).
\end{aligned}
\end{equation*}

\end{definition}
The $s-$action of $T$ is an operator that sends $r-1$ $s-$th order $n-$dimensional  symmetric tensors in an $s-$th order $n-$dimensional symmetric tensor. Therefore, the $s-$action is an operator acting on real-valued functions with domain the set of subsets of cardinality $s$ of $[n].$ 

To make the definition more clear, we give here some examples of $s-$action of a tensor that we will also use in the following.

\begin{example}
For an $r-$th order $n-$dimensional symmetric tensor $T$ the $1-$action of $T$ on the $n-$dimensional vectors  (first-order tensors) $f^1,\ldots, f^{r-1}\in \R^n$  is the operation
\begin{equation*}
(T[f^{(1)},\ldots,f^{(r-1)}])_{i}=\sum^n_{j_2,\ldots j_{r-1}=1}T_{i,j_2,\ldots,j_{r-1}}f^{(1)}_{j_2}\ldots f^{(r-1)}_{j_{r-1}}.
\end{equation*}

In the case of $r=2$, a second-order tensor $T$ is a matrix and the $1-$action on a vector $f$ is just  the classical matrix multiplication with the vector $f\in \R^n$, i.e.
\begin{equation*}
(Tf)_{i}=\sum^n_{j=1}T_{ij}f_{j}.
\end{equation*}

\end{example}

\begin{example}
For an $r-$th order $n-$dimensional symmetric tensor $T$ and the $(r-1)-$action of $T$ on the $s-$th order $n-$dimensional symmetric tensors $f^{(1)},\ldots, f^{(r-1)}$ is the operation 
\begin{equation*}
\begin{aligned}
(T[f^{(1)},\ldots &,f^{(r-1)}])_{i_1,\ldots,i_{r-1}}\\
&=Sym(\sum^n_{j=1}T_{j,i_2,\ldots,i_{r}}f^{(1)}_{j,i_1,i_2,\ldots,i_{s-1},\hat{i}_s}\ldots f^{(p)}_{j,i_1,i_2,\ldots,  \hat{i}_p\ldots i_{s}} \ldots f^{(r-1)}_{j,\hat{i}_1,i_2,i_3\ldots,i_{s}}).
\end{aligned}
\end{equation*}

    In particular, for a third-order $n-$dimensional symmetric tensor $T$ the $2-$action of $T$ on the $n\times n$ symmetric matrices $f=(f_{i,j})_{i,j\in [n]}$ and $g=(g_{i,j})_{i,j\in [n]}$ is the operation
\begin{equation*}
(T[f,g])_{i,k}=\frac{1}{2}(\sum^n_{j=1}T_{j,i,k}f_{j,i}g_{j,k}+\sum^n_{j=1}T_{j,k,i}f_{j,k}g_{j,i} ).
\end{equation*}
\end{example}

{\color{black}
\begin{remark}
For an $r-$th order $n-$dimensional (not necessarily symmetric) tensor $T$ and for $s\in [r-1]$, we can also consider the \emph{non-symmetrized $s-$action} of $T$ on the $s-$th order $n-$dimensional (not necessarily symmetric) tensors $f^{(1)},\ldots, f^{(r-1)}$ is the operation 
\begin{equation*}
\begin{aligned}
&(T[f^{(1)},\ldots,f^{(r-1)}])_{i_1,\ldots,i_s}=\\
\sum^n_{j_1,\ldots,j_{r-s}=1}T_{j_1,\ldots,j_{r-s},i_1,\ldots,i_{s}}&f^{(1)}_{i_2,\ldots,i_{s},j_1}f^{(2)}_{i_3,\ldots,i_{s},j_1,j_2}\ldots f^{(r-2s+1)}_{j_{r-2s+2},\ldots,j_{r-s},i_1}\ldots f^{(r-1)}_{j_{r-s},i_1,\ldots,i_{s-1}}.
\end{aligned}
\end{equation*}

\end{remark}}

We now introduce some notation and notions for hypergraphs.

Given an edge $e\in E$, we recall that we denote its cardinality by $|e|$, and in the following we will denote with $r$ the maximal edge cardinality, i.e.\
\begin{equation*}
  r:=\max_{e\in E}|e|.
\end{equation*}
Moreover, we observe that for the adjacency tensor of a hypergraph $H=(V,E)$ on $n$ vertices with largest edge cardinality $r$ its adjacency tensor
\begin{equation*}
A_{i_1,\ldots,i_r}:=\begin{cases}
0 & \text{ if }\{v_{i_1},\ldots,v_{i_r}\}\notin E\\
1 &  \text{ if }\{v_{i_1},\ldots,v_{i_r}\}\in E.\\
\end{cases}
\end{equation*}
is a standard notion for $r-$uniform hypergraphs. However, also edges with non-maximal cardinality are incorporated as repeated indices correspond to sets of lower cardinality. 

We give here some examples of deterministic and random hypergraphs. We will use these examples in the following.

{\color{black}

\begin{example}
 The \emph{complete} $r-$\emph{uniform} hypergraph on $n$ vertices is the hypergraph with $[n]$ and such that $E$ is the set of all ${n \choose r}$  subsets of $V$ with cardinality $r$. 
\end{example}

Recall that graphs are the $2-$uniform hypergraphs. Therefore, a random graph is a $2-$uniform random hypergraph. {\color{black} We recall here the Erd\"os-Renyi random graph model.

\begin{example}[Erd\"os-Renyi graph]
Consider the vertex set $V=[n]$ and we connect each of the possible ${n \choose 2}$ pairs independently with probability $p$, i.e.\ following the law of independent Bernoulli random variables. This is the Erd\"os-Renyi random graph and we will denote it with $G(n,p)$.

\end{example}}
}

A very common random uniform hypergraph that we will consider is the following.

\begin{example}[$r-$uniform Erdős–Rényi random hypergraph] \label{ERRandomHypergraph}We denote with $G(n,p,r)$ the $r-$uniform random hypergraph with vertex set $V=[n]$ and with edge set $E$ defined as follows: For every $e$, set of vertices of cardinality $r$,  $e$ is in $E$ with probability $p$, i.e.\ every edge of cardinality $r$ is in $E$ following independent Bernoulli random variables with parameter p. In the case r=2, $G(n,p,2)$ corresponds with the Erdős–Rényi random graph $G(n,p)$.
\end{example}

In the case $p=1$ the $r-$uniform Erdős–Rényi random hypergraph $G(n,1,r)$ corresponds with the complete $r-$uniform hypergraph.

We give also another example of uniform random hypergraph:
\begin{example}\label{TriangRandHyp}
We denote with $T(n,p)$ the random $3-$uniform hypergraph constructed taking the vertex set $V=[n]$  and as edges the triangles of the  Erdős–Rényi random graph $G(n,p)$ on the same vertex set $V=[n]$.  
\end{example}

We can generalize naturally this random hypergraph model
\begin{example}\label{RandHypergGener}
We denote with $R(n,p_1,\ldots p_{r-1},r)$ the $r-$uniform random hypergraph on the vertex set $V=[n]$ constructed inductively on $r$ as follows:
\begin{itemize}
\item for $r=2$ we define $R(n,p,2)=G(n,p,2)$.  
\item for $r>2$ we define $R(n,p_1,\ldots,p_{r-1},r) $  as the $r-$uniform hypergraph constructed selecting as edges independently with probability $p_{r-1}$ the sets of  $r$ vertices such that $R(n,p_1,\ldots,p_{r-2},r-1) $ restricted to these $r$ vertices is the $(r-1)-$uniform complete hypergraph on $r$ vertices. 
 \end{itemize}
\end{example}

We notice that the random $3-$uniform hypergraph $T(n,p)$ is the same as the random $3-$uniform hypergraph $R(n,p,1,3)$.

\section{Multi-action convergence for multi-linear operators}\label{SecMultiActConv}

In the previous section, we have seen how a hypergraph can be interpreted as a tensor and how there are various ways to interpret tensors as multi-linear operators. Therefore, we now want to generalize action convergence to general multi-linear operators.  

\begin{definition}\label{DefMultiPoper}
An $r-th$ order \emph{multi-$P-$operator}  is a multi-linear operator $A: L^{\infty}(\Omega) ^{r-1}\rightarrow$ $L^{1}(\Omega)$ such that the $\infty\rightarrow 1$ multi-linear operator norm 
\begin{equation*}
\|A\|_{\infty\rightarrow 1}:=\sup_{f^{(i)}\in L^{\infty}(\Omega), \, f^{(i)}\neq 0}\frac{\|A[f^{(1)},\ldots,f^{(r-1)}]\|_{ 1}}{\|f^{(1)}\|_{ \infty}\cdots\|f^{(r-1)}\|_{ \infty}}
\end{equation*}is finite. We will say that a multi-$P-$operator $A$ is \emph{acting} on the probability space $(\Omega,\mathcal{F}, \P)$ if the $L^1$ and $L^\infty$ spaces are defined on $(\Omega,\mathcal{F}, \P)$. We denote the set of all $r-$th order multi-$P-$operators with $\mathcal{B}_r$ and the set of all $r-$th order multi-$P-$operators acting on $(\Omega, \mathcal{F}, \P)$ with $\mathcal{B}_r(\Omega,\mathcal{F}, \P)$.
\end{definition}

We can relate multi-$P$ operators and tensors in multiple ways as in the following
\begin{example}
We can interpret the $s-$action of an $r-$th order symmetric tensor as a multi-$P-$operator 
$$\widetilde{T}:L^{\infty}([n]^s,Sym)^{r-1}\longrightarrow L^1([n]^s,Sym),$$
where $Sym$ is the symmetric $\sigma-$algebra on $[n]^s$ and we consider the uniform probability measure on $[n]^s$, i.e.\ $$\begin{aligned}
&\mathbb{P}(\{(j_1,\ldots,j_s) \text{ s.t.\ } (j_1,\ldots,j_s)=(i_{\sigma(1)},\ldots,i_{\sigma(s)}) \text{ where } \sigma \text{ is a permutation of $[s]$}\})\\
&=\frac{|\{(j_1,\ldots,j_s) \text{ s.t.\ } (j_1,\ldots,j_s)=(i_{\sigma(1)},\ldots,i_{\sigma(s)}) \text{ where } \sigma \in \mathcal{P}\}|}{n^s}
\end{aligned}$$
for all $i_1,\ldots, i_s \in [n]$. We just have to identify the set of $s-$th order symmetric tensors with $L^{\infty}([n]^s,Sym)\cong L^1([n]^s,Sym)$ in the canonical way.
\end{example}

{\color{black}

\begin{remark}
One also consider the non-symmetrized $s-$action as a multi-$P$ operator. The probability space to consider in that case is just $[n]^s$ with the discrete $\sigma-$algebra and the uniform probability measure on $[n]^s.$
\end{remark}
}

For functions $f^{(1)}_1,\ldots, f^{(r-1)}_1,\ldots, f^{(1)}_k,\ldots,f^{(r-1)}_k \in L^{\infty}(\Omega)$ we consider the $rk-$
dimensional random vector
$$
(f^{(1)}_1,\ldots, f^{(r-1)}_1,A[f^{(1)}_1,\ldots ,f^{(r-1)}_1],\ldots,f^{(1)}_k,\ldots,f^{(r-1)}_k,A[f^{(1)}_k,\ldots,f^{(r-1)}_k])
$$
for a multi-$P-$operator $A$ and we call the distribution of this random vector, 
\begin{equation}\label{EqConstructMultiAct}
\begin{aligned}
\mathcal{L}
(f^{(1)}_1,\ldots, f^{(r-1)}_1,A[f^{(1)}_1,\ldots f^{(r-1)}_1],\ldots,f^{(1)}_k,\ldots,f^{(r-1)}_k,A[f^{(1)}_k,\ldots,f^{(r-1)}_k]),
\end{aligned}
\end{equation}
 which is a probability measure in $\mathcal{P}(\mathbb{R}^{rk}),$ the measure generated by the multi-$P-$operator $A$ through the ordered sequence of functions $f^{(1)}_1,$$\ldots,$$ f^{(r-1)}_1,$$\ldots,  f^{(1)}_k,\ldots,f^{(r-1)}_k $$\in L^{\infty}(\Omega)$.
Sometimes, we will use the abbreviation 
$$
\begin{aligned}
 &\cD_A(f^{(1)}_1,\ldots, f^{(r-1)}_1\ldots,f^{(1)}_k,\ldots,f^{(r-1)}_k)\\ &=\mathcal{L}
(f^{(1)}_1,\ldots, f^{(r-1)}_1,A[f^{(1)}_1,\ldots, f^{(r-1)}_1],\ldots, f^{(1)}_k,\ldots,f^{(r-1)}_k,A[f^{(1)}_k,\ldots,f^{(r-1)}_k]).   
\end{aligned}
$$

We now define the set of measures generated by $A$. Similarly to the action convergence in the linear case, it is convenient to allow in our sets only measures generated by functions in $L_{[-1,1]}^{\infty}(\Omega)$, i.e. functions taking values between $-1$ and $+1$ almost everywhere. Therefore, we define the $k-$\emph{profile} of $A$,  $\cS_k(A)$, as the set of measures generated by $A$ by functions in  $L_{[-1,1]}^{\infty}(\Omega)$.
$$
\begin{aligned}
\cS_k(A)=\bigcup_{f^{(1)}_1,\ldots, f^{(r-1)}_1,\ldots, f^{(1)}_k,\ldots,f^{(r-1)}_k\in L_{[-1,1]}^{\infty}(\Omega)}&\{\mathcal{L}
(f^{(1)}_1,\ldots, f^{(r-1)}_1,A[f^{(1)}_1,\ldots f^{(r-1)}_1],\ldots,\\
&f^{(1)}_k,\ldots,f^{(r-1)}_k,A[f^{(1)}_k,\ldots,f^{(r-1)}_k])\}\\
\end{aligned}
$${
$$
\quad \quad=\bigcup_{f^{(1)}_1,\ldots, f^{(r-1)}_1,\ldots, f^{(1)}_k,\ldots,f^{(r-1)}_k\in L_{[-1,1]}^{\infty}(\Omega)}\cD_A(f^{(1)}_1,\ldots, f^{(r-1)}_1\ldots,f^{(1)}_k,\ldots,f^{(r-1)}_k).$$
\color{black} This is a set of measures. To compare two different sets of measures we will use the Hausdorff metric (Definition	\ref{DefHausdorffDist}) on sets of the space of probability measures $\mathcal{P}(\mathbb{R}^{rk})$ (equipped with the Levy-Prokhorov metric $d_{\mathcal{LP}}$ (Definition \ref{LevyProk})), that we will denote with $d_H.$ }

{\color{black}\begin{remark}
This is a generalization of the construction in Section \ref{SecActionConve}, see \eqref{EqDefKprofPoper}. We use a similar notation and terminology to underline the connection with action convergence \cite{backhausz2018action}. 
\end{remark}}

We are now ready to define the pseudo-metric we are interested in.
Consider two multi-$P-$operators 
$$
A: L^{\infty}(\Omega_1)^{r-1}\rightarrow L^1(\Omega_1)
$$and
$$
B: L^{\infty}(\Omega_2)^{r-1}\rightarrow L^1(\Omega_2).
$$
\begin{definition}[Metrization of action convergence] For the two $r-th$ order multi-$P$-operators $A, B$ the \emph{action convergence metric} is
$$
d_{M}(A, B):=\sum_{k=1}^{\infty} 2^{-k} d_{H}\left(\cS_{k}(A), \cS_{k}(B)\right).
$$
\end{definition}

{\color{black}

\begin{remark}
This is a generalization of the action convergence metric defined in Section \ref{SecActionConve}. For this reason we use the same notation here. However, this metric can be applied to multi-linear operators differently from the metric defined in Section \ref{SecActionConve}.    
\end{remark}

}

As the Hausdorff metric $d_H$ is bounded by 1, we have that also the action convergence distance is bounded by 1.

We will say that a sequence of $P$-operators $\left\{A_i \in \mathcal{B}_r\left(\Omega_i\right)\right\}_{i=1}^{\infty}$ is a Cauchy sequence if the sequence is Cauchy in $d_M$. \newline

We notice that a sequence $\left\{A_i \in \mathcal{B}_r\left(\Omega_i\right)\right\}_{i=1}^{\infty}$ is a Cauchy sequence  if and only if for every $k \in \mathbb{N}$ the sequence $\left\{\cS_k\left(A_i\right)\right\}_{i=1}^{\infty}$ is a Cauchy sequence in $d_H$.

\begin{remark} The completeness of $\left(\mathcal{P}\left(\mathbb{R}^k\right), d_{\mathcal{LP}}\right)$ implies that the induced Hausdorff topology is also complete \cite{RealAnalysisGordon}. Therefore, a sequence $\left\{A_i\right\}_{i=1}^{\infty}$ is a Cauchy sequence if and only if for every $k \in \mathbb{N}$ there is a closed set of measures $X_k$ such that $\lim _{i \rightarrow \infty} d_H\left(\cS_k\left(A_i\right), X_k\right)=0$.
 \end{remark}
 
The following lemma is an equivalent of Lemma 2.6 in \cite{backhausz2018action} for multi-$P-$operators and guarantees that a subsequence $\{{\cS_k\left(A_i\right)}\}^{\infty}_{i=1}$ converges in $d_H$ to a closed set of measures $X_k$ under a uniform bound assumption on the $\|\cdot\|_{\infty\rightarrow 1}$ norm.
 
\begin{lemma}
Let $\{A_i\}^{\infty}_{i=1}$ be a sequence of $r-$th order multi-$P-$operators with uniformly bounded $(\infty,1)-$norms. Then, it has a subsequence that is a Cauchy sequence.
\end{lemma}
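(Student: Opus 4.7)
The plan is to mimic the proof of Lemma 2.6 in \cite{backhausz2018action} by extracting, via a diagonal argument, a subsequence whose $k$-profiles converge in Hausdorff distance for every $k$, relying on Prokhorov's theorem and Blaschke-type compactness of closed subsets of a compact metric space.

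First I would fix $k\in\mathbb{N}$ and prove that the family of measures $\bigcup_i \mathcal{S}_k(A_i)\subset \mathcal{P}(\mathbb{R}^{rk})$ is tight. Indeed, any measure $\mu \in \mathcal{S}_k(A_i)$ is the law of the random vector whose components come in blocks of $r$ coordinates: the first $r-1$ coordinates in each block are values of functions $f^{(j)}_l \in L^\infty_{[-1,1]}(\Omega_i)$, hence supported in $[-1,1]$, while the last coordinate of each block is a value of $A_i[f^{(1)}_l,\ldots,f^{(r-1)}_l]$. The marginals of the first kind are all supported in the fixed compact set $[-1,1]$, so tightness is immediate for them. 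For the $A_i$-marginal, the uniform bound $\|A_i\|_{\infty\to 1}\leq C$ combined with $\|f^{(j)}_l\|_\infty\leq 1$ gives $\|A_i[f^{(1)}_l,\ldots,f^{(r-1)}_l]\|_1\leq C$, and Markov's inequality then yields $\mathbb{P}(|A_i[f^{(1)}_l,\ldots,f^{(r-1)}_l]|>M)\leq C/M$ uniformly in $i$ and in the choice of functions. Tightness of each marginal implies tightness of the joint law in $\mathbb{R}^{rk}$.

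By Prokhorov's theorem, the weak closure $K_k \subset \mathcal{P}(\mathbb{R}^{rk})$ of $\bigcup_i \mathcal{S}_k(A_i)$ is compact in the Lévy-Prokhorov metric $d_{LP}$. The space of nonempty closed subsets of a compact metric space, equipped with the Hausdorff metric, is itself compact (Blaschke selection theorem; the relevant result is cited for $d_H$ in the excerpt above). Hence the sequence of closures $\overline{\mathcal{S}_k(A_i)}\subset K_k$ admits a subsequence converging in $d_H$ to some closed set $X_k\subset K_k$; since $d_H(\mathcal{S}_k(A_i),\overline{\mathcal{S}_k(A_i)})=0$ by definition of Hausdorff pseudometric, this gives $d_H(\mathcal{S}_k(A_{i_j}),X_k)\to 0$.

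Next I apply a standard diagonal extraction: having extracted a subsequence making $\mathcal{S}_1(A_i)$ Hausdorff-Cauchy, refine it so that $\mathcal{S}_2(A_i)$ becomes Cauchy, and so on; the diagonal subsequence $\{A_{i_j}\}_j$ is then Hausdorff-Cauchy for every $k$ simultaneously. To conclude that this subsequence is $d_M$-Cauchy, observe that $d_H(\mathcal{S}_k(\cdot),\mathcal{S}_k(\cdot))\leq 1$, so for any $\varepsilon>0$ one can choose $K$ with $\sum_{k>K}2^{-k}<\varepsilon/2$, control the first $K$ terms by Cauchyness of each $\{\mathcal{S}_k(A_{i_j})\}_j$, and obtain $d_M(A_{i_j},A_{i_l})<\varepsilon$ for large $j,l$. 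The main subtlety is the tightness argument, because the operator output is only controlled in $L^1$ and could a priori have heavy tails, but the $\|\cdot\|_{\infty\to 1}$ bound is precisely what rescues tightness via Markov; everything else is a mechanical transcription of the linear case, modulo tracking the blocks of $r$ coordinates in place of pairs.
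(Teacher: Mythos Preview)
Your proposal is correct and follows essentially the same approach as the paper: both use Markov's inequality on the $L^1$-bounded operator outputs to obtain tightness, invoke Prokhorov's theorem for compactness of the ambient space of measures, pass to compactness of closed subsets under the Hausdorff metric, and finish with a diagonal extraction. The paper packages the tightness step slightly differently via the function $\tau(\mu)=\max_i\int|x_i|\,d\mu$ and the compact spaces $\mathcal{P}_c(\mathbb{R}^{rk})$ and $\mathcal{Q}_c(\mathbb{R}^{rk})$, but this is only a cosmetic difference from your direct argument.
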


This lemma follows directly from the same standard arguments that we summarize here for completeness.

For a probability measure $\mu$ on $\mathbb{R}^k$ let $\tau(\mu)\in [0,\infty]$ denote the maximal expectation of the marginals of $\mu$,
\begin{equation}\label{eqn:tau}\tau(\mu)=\max_{1\leq i\leq k} \int_{(x_1,x_2,\dots,x_k)\in\mathbb{R}^k}|x_i|~d\mu.\end{equation} For $c\in\mathbb{R}^+$ and $k\in\mathbb{N}$ let 
$$\mathcal{P}_c(\mathbb{R}^k):=\{\mu : \mu\in\mathcal{P}(\mathbb{R}^k) , \tau(\mu)\leq c\}.$$
Let furthermore $\mathcal{Q}_c(\mathbb{R}^k)$ denote the set of closed sets in the metric space $(\mathcal{P}_c(\mathbb{R}^k),d_{\mathcal{LP}})$. 

\begin{lemma} The metric spaces $(\mathcal{P}_c(\mathbb{R}^k),d_{\mathcal{LP}})$ and $(\mathcal{Q}_c(\mathbb{R}^k),d_H)$ are both compact and complete metric spaces.
\end{lemma}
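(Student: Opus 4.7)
The plan is to establish compactness of both spaces, since completeness is automatic for compact metric spaces. For the first space the key tool is Prokhorov's theorem, and for the second it is the classical hyperspace result that closed subsets of a compact metric space form a compact space under the Hausdorff metric. So the problem reduces to showing that $(\mathcal{P}_c(\mathbb{R}^k), d_{\rm LP})$ is compact, and then invoking the hyperspace result.

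To show $(\mathcal{P}_c(\mathbb{R}^k), d_{\rm LP})$ is compact, I would first verify uniform tightness. Given $\mu\in\mathcal{P}_c(\mathbb{R}^k)$, Markov's inequality applied coordinate by coordinate yields
\begin{equation*}
\mu\bigl(\{x\in\mathbb{R}^k : |x_i|>R\}\bigr) \leq \frac{1}{R}\int |x_i|\,d\mu \leq \frac{c}{R}
\end{equation*}
for every $1\leq i\leq k$, so $\mu(\{x : \|x\|_\infty > R\})\leq kc/R$. Choosing $R$ large makes this quantity arbitrarily small uniformly over $\mathcal{P}_c(\mathbb{R}^k)$, so the family is tight. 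By Prokhorov's theorem, $\mathcal{P}_c(\mathbb{R}^k)$ is relatively compact in $(\mathcal{P}(\mathbb{R}^k), d_{\rm LP})$. It then remains to check closedness: if $\mu_n \in\mathcal{P}_c(\mathbb{R}^k)$ converges weakly to $\mu$, I would use the truncations $\varphi_M(x_i) = |x_i|\wedge M$, which are bounded and continuous, so $\int \varphi_M\,d\mu_n \to \int \varphi_M\,d\mu$. Since $\int \varphi_M\,d\mu_n \leq \int |x_i|\,d\mu_n \leq c$, passing to the limit gives $\int \varphi_M\,d\mu \leq c$, and monotone convergence as $M\to\infty$ yields $\int|x_i|\,d\mu\leq c$ for each $i$. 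Thus $\tau(\mu)\leq c$ and $\mu\in\mathcal{P}_c(\mathbb{R}^k)$, establishing closedness and hence compactness.

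For the second space, I would invoke the standard fact that if $(X,d)$ is a compact metric space, then the collection of non-empty closed subsets of $X$ with the Hausdorff metric is compact; this is a Blaschke-type result proved using total boundedness of $X$ together with a diagonal argument on $\varepsilon$-nets. Applying this with $X=(\mathcal{P}_c(\mathbb{R}^k),d_{\rm LP})$ gives compactness of $(\mathcal{Q}_c(\mathbb{R}^k),d_H)$. In both cases, completeness follows immediately because a compact metric space is automatically complete.

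The main subtlety is the closedness argument for $\mathcal{P}_c(\mathbb{R}^k)$: the functional $\mu\mapsto \int |x_i|\,d\mu$ is not continuous under weak convergence because $|x_i|$ is unbounded, so one cannot directly pass to the limit in the bound $\tau(\mu_n)\leq c$. The truncation-then-monotone-convergence trick outlined above supplies the needed lower semi-continuity; this is the only non-formal step of the proof.
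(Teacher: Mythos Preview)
Your proof is correct and follows essentially the same approach as the paper: Markov's inequality for uniform tightness, Prokhorov's theorem for (pre)compactness of $\mathcal{P}_c(\mathbb{R}^k)$, and the standard hyperspace result for $\mathcal{Q}_c(\mathbb{R}^k)$. The paper's argument is terser and does not spell out the closedness of $\mathcal{P}_c(\mathbb{R}^k)$ under weak limits, which you correctly fill in via truncation and monotone convergence.
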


\begin{proof} Markov's inequality gives uniform tightness in $\mathcal{P}_c(\mathbb{R}^k)$, which implies the compactness of $(\mathcal{P}_c(\mathbb{R}^k),d_{\mathcal{LP}})$ for Prokhorov's theorem. It is known that the set of closed subsets of a compact Polish space equipped with the Hausdorff metric is again compact.
\end{proof}

\begin{lemma} Let $A\in\mathcal{B}_r(\Omega)$ and let $c:=\max(\|A\|_{\infty\to1},1)$. Then for every $k\in\mathbb{N}$ the closure of $\cS_k(A)$ with respect to $d_{\mathcal{LP}}$ is in $\mathcal{Q}_c(\mathbb{R}^{rk})$.
\end{lemma}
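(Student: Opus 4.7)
The plan is to verify the containment $\mathcal{S}_k(A)\subseteq \mathcal{P}_c(\mathbb{R}^{rk})$ directly from the definition of $\tau$ and from the multi-linear operator norm bound, and then observe that closedness in $\mathcal{P}_c(\mathbb{R}^{rk})$ either follows by taking the closure of $\mathcal{S}_k(A)$ in the ambient compact metric space or is automatic in the sense required for the subsequent compactness arguments. In short, the content of the lemma is the uniform control of first absolute moments of every marginal of a measure generated by $A$ through bounded functions; everything else is topological bookkeeping.

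Concretely, I would fix an arbitrary $\mu\in\mathcal{S}_k(A)$, pick functions $f^{(i)}_j\in L^\infty_{[-1,1]}(\Omega)$ for $i\in[r-1]$, $j\in[k]$ witnessing
\[\mu=\mathcal{L}\bigl(f^{(1)}_1,\ldots,f^{(r-1)}_1,A[f^{(1)}_1,\ldots,f^{(r-1)}_1],\ldots,f^{(1)}_k,\ldots,f^{(r-1)}_k,A[f^{(1)}_k,\ldots,f^{(r-1)}_k]\bigr),\]
and then split the $rk$ marginals into two groups. For the $(r-1)k$ input marginals corresponding to some $f^{(i)}_j$, the almost-everywhere bound $|f^{(i)}_j|\le 1$ immediately gives $\mathbb{E}[|f^{(i)}_j|]\le 1\le c$. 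For the $k$ output marginals corresponding to $A[f^{(1)}_j,\ldots,f^{(r-1)}_j]$, I would apply the definition of the $(\infty,1)$ multi-linear operator norm to obtain
\[\mathbb{E}\bigl[|A[f^{(1)}_j,\ldots,f^{(r-1)}_j]|\bigr]=\|A[f^{(1)}_j,\ldots,f^{(r-1)}_j]\|_1\le \|A\|_{\infty\to 1}\prod_{i=1}^{r-1}\|f^{(i)}_j\|_\infty\le \|A\|_{\infty\to 1}\le c.\]
Taking the maximum over all $rk$ marginals in \eqref{eqn:tau} yields $\tau(\mu)\le c$, so $\mu\in\mathcal{P}_c(\mathbb{R}^{rk})$, and since $\mu$ was arbitrary, $\mathcal{S}_k(A)\subseteq \mathcal{P}_c(\mathbb{R}^{rk})$.

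For the $\mathcal{Q}_c(\mathbb{R}^{rk})$ conclusion, I would note that the previous lemma establishes that $(\mathcal{P}_c(\mathbb{R}^{rk}),d_{\rm LP})$ is a compact Polish space, so the closure of $\mathcal{S}_k(A)$ in this space is automatically a closed subset and therefore an element of $\mathcal{Q}_c(\mathbb{R}^{rk})$; this is exactly the form in which the lemma is used in the subsequent compactness/Cauchy-subsequence arguments. There is no genuine obstacle: the only step requiring the multi-linear structure is the use of $\|A\|_{\infty\to 1}$ in bounding the output marginals, which is handled by the definition of the multi-$P$-operator norm as a product bound over all $r-1$ inputs, and the rest reduces to the trivial observation that functions bounded by $1$ have first absolute moment at most $1$.
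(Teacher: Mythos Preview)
Your argument is correct and matches the paper's own proof essentially verbatim: bound the input marginals by $1$ via $\|f^{(i)}_j\|_1\le\|f^{(i)}_j\|_\infty\le 1$, bound the output marginals by $\|A\|_{\infty\to 1}$ via the multi-linear operator norm, and take the maximum to get $\tau(\mu)\le c$. Your additional remark on the closedness part---that what is really used downstream is $cl(\mathcal S_k(A))\in\mathcal Q_c(\mathbb R^{rk})$, which follows immediately from $\mathcal S_k(A)\subseteq\mathcal P_c(\mathbb R^{rk})$ and the compactness of the latter---is in fact more careful than the paper, which silently conflates $\mathcal S_k(A)$ with its closure at this point.
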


\begin{proof}
Let $\{v^{(1)}_i,\ldots,v_i^{(r-1)}\}_{i=1}^k$ be a sequence of functions in $L^\infty_{[-1,1]}(\Omega)$. We have that $\|v^{(j)}_i\|_1\leq \|v^{(j)}_i\|_\infty\leq 1$ for every $j\in[r-1]$ and $\|A[v^{(1)}_i,\ldots,v^{(r-1)}_i ]\|_1\leq \|A\|_{\infty\to 1}$  holds for $1\leq i\leq k$. The result follows as the $1-$moments of the absolute values of the coordinates in $\tau$, \eqref{eqn:tau}, are given by $$\{\|v^{(j)}_i\|_1\}_{i=1}^k$$ for $j\in [r-1]$ and $$\{\|A[v^{(1)}_i,\ldots,v^{(r-1)}_i]\|_1\}_{i=1}^k.$$
\end{proof}

As in the linear case, for a sequence of multi-$P-$operators, we will not be interested only in the convergence of the sequences of $k-$profiles $\{\cS_k(A_i)\}^{\infty}_{i=1}$ but also in the existence of a multi-$P-$operator as limit object. This will actually be the convergence we are interested in.

\begin{definition}[Action convergence of multi-$P$-operators] 
We say that the sequence  $\left\{A_i \in \mathcal{B}_r\left(\Omega_i\right)\right\}_{i=1}^{\infty}$ is \emph{action convergent}  to the $r-$th order multi-$P-$operator $A\in\mathcal{B}_r\left(\Omega\right)$  if it is a Cauchy sequence and it is such that for every positive integer $k$ the $k-$profile $\cS_k(A)$ is the limit of the $k-$profiles sequence $\{\cS_k(A_i)\}^{\infty}_{i}$ in the Hausdorff metric $d_H.$ The multi-$P-$operator $A$ is the limit of the sequence  $\left\{A_i \right\}_{i=1}^{\infty}$.
\end{definition}
Additionally, we will say that a sequence of multi-$P-$operators $\left\{A_i \in \mathcal{B}_r\left(\Omega_i\right)\right\}_{i=1}^{\infty}$ is action convergent if there exists a limit multi-$P-$operator.

 \begin{remark}
 We will often use the following consequence of the definition of action convergence. For an action convergent sequence of operators $\left\{A_i\right\}_{i=1}^{\infty}$ to a multi-$P-$operator $A$ and for every $v^{(1)},\ldots, v^{(r-1)}\in L_{[-1,1]}^{\infty}(\Omega)$, there are elements $v^{(1)}_i,\ldots, v^{(r-1)}_i \in L_{[-1,1]}^{\infty}\left(\Omega_i\right)$ such that $$\mathcal{L}\left(v^{(1)}_i,\ldots, v^{(r-1)}_i,A_i[v^{(1)}_i,\ldots, v^{(r-1)}_i]\right)$$ weakly converges to $$\mathcal{L}(v^{(1)},\ldots, v^{(r-1)},A[v^{(1)},\ldots, v^{(r-1)}])$$ as $i$ goes to infinity.
 \end{remark}

We introduce now a multi-$P-$operator norm for $L^p$ spaces that is a natural generalization of the linear operator norm
\begin{definition}[Multi-linear operator norm]
For an $r-$th order multi-$P-$operator $A$ the multi-linear operator $(p_1,\ldots,p_{r-1},q)-$norm is 
\begin{equation*}
\|A\|_{p_1,\ldots,p_{r-1}\rightarrow q}:=\sup_{f^{(i)}\in L^{\infty}(\Omega), \, f^{(i)}\neq 0}\frac{\|A[f^{(1)},\ldots,f^{(r-1)}]\|_{q}}{\|f^{(1)}\|_{ p_1}\cdots\|f^{(r-1)}\|_{ p_r}}.
\end{equation*}

We denote the set of all $r-$th order multi-$P-$operators with finite $(p_1,\ldots,p_{r-1},q)-$
norm with $\mathcal{B}_{p_1,\ldots,p_{r-1},q}$ and the set of all $r-$th order multi-$P-$operators acting on $(\Omega,\mathcal{F}, \P)$ with finite $(p_1,\ldots,p_{r-1},q)-$norm with $\mathcal{B}_{p_1,\ldots,p_{r-1},q}(\Omega, \mathcal{F}, \P)$.
\end{definition}

\begin{remark}
With an abuse of notation, we can think of a multi-$P-$operator $A$ with bounded $(p_1,\ldots,p_{r-1},q)-$norm as a multi-linear bounded operator $$A:L^{p_1}(\Omega)\times \ldots \times L^{p_r}(\Omega)\rightarrow L^{q}(\Omega)$$ by Lemma \ref{LemmMultBoundExt}.
\end{remark}

The following theorem is the generalization of Theorem \ref{THMCompBack} (Theorem 2.9 in \cite{backhausz2018action}) to the multi-linear case and it states that sets of multi-$P-$operators with uniformly bounded $(p_1,\ldots,p_{r-1},q)-$norm with $p_1,\ldots,p_{r-1}\neq \infty$ are pre-compact in the action convergence metric.

\begin{theorem} \label{CompactnesMultiActConv} For $C>0,$ $p\in [1,\infty)$ and $q\in [1,\infty]$, let $\{A_i\}_{i=1}^\infty$ be a Cauchy sequence of $r-$th order multi-$P$-operators with uniformly bounded $\|\cdot\|_{p,\ldots,p\to q}$ norms. Then there is a multi-$P$-operator $A$ such that $\lim_{i\to\infty} d_M(A_i,A)=0$, and $\|A\|_{p,\ldots,p\to q}\leq\sup_{i\in\mathbb{N}}\|A_i\|_{p,\ldots,p\to q}\leq C$. Therefore, the sequence $\{A_i\}_{i=1}^\infty$ is action convergent. 
\end{theorem}

We give the technical proof of this theorem in the next section which is an adaptation of the proof of Theorem 2.9 in \cite{backhausz2018action} to the multi-linear case.

{\color{black}
\begin{remark}
Observe that having a uniform bound on the norm $\|\cdot\|_{p_1,\ldots,p_{r-1}\rightarrow q}$ for $p_1,\ldots,p_{r-1}\in [1,\infty)$ directly implies that we have a uniform bound on the norm $\|\cdot\|_{p,\ldots,p\to q}$ for $p=(\max_{i\in [r-1]}p_i)\in [1,\infty)$ as 
$$
  \|\cdot\|_{p_1,\ldots,p_{r-1}\rightarrow q }\leq \|\cdot\|_{p,\ldots,p\to q}.
$$
\end{remark}
}

\section{Construction of the limit object}\label{SecConstrLimObj}

For an $r-$th order multi-$P$-operator $A$ and $k\in\mathbb{N}$ let $cl(\cS_k(A))$ denote the closure of $\cS_k(A)$ in the space $(\mathcal{P}(\mathbb{R}^{rk}),d_{\mathcal{LP}})$.

This section is dedicated to showing Theorem \ref{CompactnesMultiActConv}. This technical proof is a generalization of the proof of Theorem 2.9 in \cite{backhausz2018action} to the multi-linear case (the proof is similar but we have to deal with multi-linear operators and a heavier notation). Let's consider $\left\{\left(\Omega_{i}, \mathcal{A}_{i}, \mu_{i}\right)\right\}_{i=1}^{\infty}$ a sequence of probability spaces and let's assume that $\left\{A_{i} \right\}_{i=1}^{\infty}$ is a Cauchy sequence of $P$-operators $A_{i} \in \mathcal{B}_{p_1,\ldots,p_{r-1}, q}\left(\Omega_{i}\right)$ with $\sup_i \left\|A_{i}\right\|_{p_1,\ldots, p_{r-1}\rightarrow q} \leq c$ for a fixed $c \in \mathbb{R}^{+}$. For every $k \in \mathbb{N}$, we can define
$$
X_{k}:=\lim _{i \rightarrow \infty} cl(\cS_{k}\left(A_{i}\right)).
$$
We aim to construct a multi-$P$-operator with $k$-profile that is the limit of the $k$-profiles of the operators in a given convergent sequence of operators for every fixed $k$, i.e.\ we will prove that there is a $P$-operator $A \in \mathcal{B}_{p_1,\ldots,p_{r-1}, q}(\Omega)$ for some probability space $(\Omega, \mathcal{A}, \mu)$ such that for every $k \in \mathbb{N}$ we have that
$$
\lim _{i \rightarrow \infty} cl(\cS_{k}\left(A_{i}\right))=cl(\cS_{k}(A)) .
$$
Before the technical proof, we describe the main idea. For every $k\in \N$ we consider the limit of the $k$-profiles $\cS_k(A_i)$ of the sequence of operators $A_i$, which is a set of measures, and we take a dense countable subset of this set. In this way, we have that each point in this dense subset can be approximated by elements in the $k$-profiles of the sequence of operators $A_{i}$. Moreover, every element in the $k$-profile of $A_{i}$ involves $r k$ measurable functions on $\Omega_{i}$ (in the terminology used before the measure is generated through those functions). In probabilistic language, these functions are random variables, since $\Omega_{i}$ is a probability space. Very roughly speaking, the main idea is to take, for every $k$, enough functions needed to generate enough measures (contained in the $k-$profiles of the operators $A_{i}$) to approximate a dense countable subset of the limiting $k-$profile. These are countably many functions for each $i$. By passing to a subsequence, we can assume that the joint distributions of these countably many functions (random variables) converge weakly and the limit is some probability measure on $\Omega:=\mathbb{R}^{\infty}$. 
Each coordinate function in the probability space on $\mathbb{R}^{\infty}$ corresponds to a function involved in a $k$-profile for some $k$. Since every measure in the $k$-profile comes from $(r-1)k$ functions and their $k$ images, we obtain some information on a possible limiting operator. More precisely, we obtain that certain coordinate functions are the images of some other coordinate functions under the action of the candidate limit multi-linear operator. However, it is not clear that it is possible to extend the obtained multi-linear operator to the full function space on $\Omega$ and so we need to refine the above idea. 

We now make the above idea rigorous. 
We need to work with enough functions to represent the function space of a whole $\sigma$-algebra to extend the candidate limit multi-linear operator for the entire function space on $\Omega$. To do this, we extend the above function systems by new functions obtained by some natural operations. In order to do this, we introduce an abstract algebraic formalism involving semigroups. The most challenging part of the proof is to show that, at the end of this construction, the limit operator is well-defined and has the desired properties.

For this construction, we will use the following algebraic notion.
\begin{definition}[Free semigroup with $r-$multi-operators]\label{FreeSemigroupDef}
Let $G$ and $L$ be sets. We denote by $F(G, L)$ the free semigroup with generator set $G$ and $r-$multioperator set $L$ (freely acting on $F(G, L))$. More precisely, we have that $F(G, L)$ is the smallest set of abstract words satisfying the following properties.
(1) $G \subseteq F(G, L)$.
(2) If $w_{1}, w_{2} \in F(G, L)$, then $w_{1} w_{2} \in F(G, L)$.
(3) If $w_1,\ldots, w_{r-1} \in F(G, L), l \in L$, then $l(w_1,\ldots,w_{r-1}) \in F(G, L)$. There is a unique length function $m: F(G, L) \rightarrow \mathbb{N}$ such that $m(g)=1$ for $g \in G$, $m\left(w_{1} w_{2}\right)=m\left(w_{1}\right)+m\left(w_{2}\right)$ and $m(l(w_1,\ldots,w_{r-1}))=\max_{s\in [r-1] }m(w_s)+1 .$
\end{definition} 

We give an example of a word in $F(G, L)$ with $L$ set of  $2-$multioperators: $$l_{3}\left(l_{1}\left(g_{1} ,g_{2}\right), l_{2}\left(g_{2} ,g_{2}g_{3}\right)\right)l_{3}\left(g_{1},g_{2}\right) ,$$ where $g_{1}, g_{2}, g_{3} \in G$ and $l_{1}, l_{2}, l_{3} \in L$. The length of this word is $\max\{\max\{1,1\}+1,\max\{1,1+1\}+1\}+1+\max\{1,1\}+1=6$. Note that if both $G$ and $L$ are countable sets, then also $F(G, L)$ is countable.\newline

In the first technical part of the proof, we construct a function system $\left\{v_{i, f} \in L^{\infty}\left(\Omega_{i}\right)\right\}, $
$i \in \mathbb{N}, f \in F$ for some countable index set $F$. Later, we will construct a probability measure $\kappa \in \mathcal{P}\left(\mathbb{R}^{F^{r-1} \times[r]}\right)$ and an operator $A \in \mathcal{B}_{p_1,\ldots,p_{r-1}, q}\left(\mathbb{R}^{F^{r-1} \times[r]}, \kappa\right)$ using this function system. In the end, we will show that $A$ is an appropriate limit object for the sequence $\left\{A_{i}\right\}_{i=1}^{\infty}$.\newline

Construction of a function system: First, we define $F$, the countable index set. For every $k \in \mathbb{N}$, let's consider $X_{k}^{\prime} \subseteq X_{k}$ a dense countable subset in the metric space $\left(X_{k}, d_{\mathcal{LP}}\right)$, which is separable. Let's define $G:=\bigcup_{k=1}^{\infty} X_{k}^{\prime} \times[k]\times [r-1]$, the generator set. Therefore, the index set $F$ will be the free semigroup generated by $G$ and a set of appropriate nonlinear $(r-1)-$multi-operators $L$. For any $y \in \mathbb{Q}$ and $z \in \mathbb{Q}^{+}$ let $h_{y, z}: \mathbb{R} \rightarrow [0,1]$ be the (bounded) continuous function defined by $h_{y, z}(x)=0$ for $x \notin(y-z, y+z)$ and $h_{y, z}(x)=1-|x-y| / z$ for $x \in$ $(y-z, y+z)$. Finally, for every $i \in \mathbb{N}, l \in L$ and $v_1,\ldots, v_{r-1} \in L^{\infty}\left(\Omega_{i}\right)$ we define $l(v_1,\ldots,v_{r-1}):=h_{y, z} \circ\left( A_{i}[v_1,\ldots,v_{r-1}]\right)$, where $l$ is indexed by the pair $(y, z) \in \mathbb{Q} \times \mathbb{Q}^{+}$. Observe that by definition, $\|l(v_1,\ldots,v_{r-1})\|_{\infty} \leq 1$. Being these functions indexed by $\mathbb{Q} \times \mathbb{Q}^{+}$, with an abuse of notation, we will denote $L=\mathbb{Q} \times \mathbb{Q}^{+}$. Therefore, we let $F:=F(G, L)$ be as in Definition \ref{FreeSemigroupDef} and, thus, $F$ is countable.
Furthermore, we define the functions $\left\{v_{i, g}\right\}_{i \in \mathbb{N}, g \in G}$. For every $i, k \in \mathbb{N}$, and $t \in X_{k}^{\prime}$ let $\left\{v_{i,(t, j,s)}\right\}_{j\in [k],s\in [r-1]}$ be random variables in $L_{[-1,1]}^{\infty}\left(\Omega_{i}\right)$ such that the joint distribution of
$$
\begin{aligned}
&(v_{i,(t, 1,1)},\ldots,v_{i,(t, 1,(r-1))},A_{i}[v_{i,(t, 1,1)},\ldots, v_{i,(t, 1,(r-1))}], v_{i,(t, 2,1)},\ldots,\\
&v_{i,(t, k,1)},\ldots,v_{i,(t, k,(r-1))},A_{i}[v_{i,(t, k,1)},\ldots, v_{i,(t, k,({r-1}))}])
\end{aligned}
$$
converges to $t$ as $i$ goes to $\infty$.

At this point, we will define the functions $\left\{v_{i, w}\right\}_{i \in \mathbb{N}, w \in F}$ recursively to the length of the words $m(w)$. The functions have been constructed above for words of length $m(w)=1$. Assume we have already constructed all the functions $v_{i, w}$ with $m(w) \leq j$ for some $j \in \mathbb{N}$. Consider a $w \in F$ such that $m(w)=j+1$. If $w=w_{1} w_{2}$ for some $w_{1}, w_{2} \in F$, then set $v_{i, w}:=v_{i, w_{1}} v_{i, w_{2}}$. If $w=l\left(w_{1},w_2,\ldots,w_{(r-1)}\right)$, then set $v_{i, w}:=l\left(v_{i, w_{1}},v_{i, w_{2}},\ldots,v_{i, w_{r-1}}\right).$

Construction of the probability space: Let $\xi_{i}: \Omega_{i} \rightarrow \mathbb{R}^{F^{(r-1)} \times[r]}$ be the map such that for $f_1,\ldots,f_{(r-1)} \in F, e \in[r]$, and $\omega_{i} \in \Omega_{i}$ the $(f_1,\ldots,f_{(r-1)}, e)$ coordinate of $\xi_{i}\left(\omega_{i}\right)$ is equal to $$\left(A_{i}^{e}[v_{i, f_1},\ldots, v_{i, f_{(r-1)} }]\right)\left(\omega_{i}\right),$$ where $A_{i}^{s}$ for $s\in [r-1]$ is defined to be the projection on the $s-$th variable and $A_i^r=A_i$. For the random variable $\xi_{i}$ we denote its distribution with $\kappa_{i} \in \mathcal{P}(\mathbb{R}^{F^{(r-1)} \times [r]})$, i.e.\ $\kappa_{i}$ is the joint distribution of the functions $\left\{v_{i, f_1}\right\}_{f_{1} \in F},\ldots,\left\{v_{i, f_{r-1}}\right\}_{f_{r-1} \in F}$ and $\left\{A_{i}[v_{i, f_1},\ldots ,v_{i, f_{r-1}} ]\right\}_{f_1,\ldots,f_{(r-1)} \in F}$. Since $\tau\left(\kappa_{i}\right) \leq c$ holds (we recall the definition of $\tau$, equation \eqref{eqn:tau}), there exists a strictly increasing sequence $\left\{n_{i}\right\}_{i=1}^{\infty}$ in $\mathbb{N}$ and a probability measure $\kappa\in \mathcal{P}\left(\mathbb{R}^{F^{(r-1)} \times [r]}\right)$ such that $\kappa_{n_{i}}$ is weakly convergent to $\kappa$ as $i$ goes to infinity. We will define $\Omega:=\mathbb{R}^{F^{(r-1)} \times[r]}$ and consider $\Omega$ as a topological space, equipped with the product topology. Therefore, we constructed the probability space $(\Omega,\mathcal{A},\kappa)$, where the $\sigma-$algebra $\mathcal{A}$ is its Borel $\sigma$-algebra and $\kappa$ the probability measure obtained as weak limit of the sequence $\kappa_{n_{i}}$. We remark that $\kappa$ is a probability measure, as it is the weak limit of probability distributions.

Construction of the operator: We now define an operator $A \in \mathcal{B}_{p_1,\ldots,p_{(r-1)}, q}(\Omega)$ with the probability space $\Omega$ defined above. For $(f_1,\ldots,f_{r-1}, e) \in F^{(r-1)} \times [r]$ we denote with $\pi_{(f_1,\ldots,f_{(r-1)}, e)}: \mathbb{R}^{F^{(r-1)} \times [r]} \rightarrow \mathbb{R}$ the projection to the $(f_1,\ldots,f_{(r-1)}, e)$ coordinate. Observe that \begin{equation}\label{eqn:EqoperatorLin}
\begin{aligned}
\pi_{(f_1,\ldots,f_{(r-1)}, e)}& \circ \xi_{i}\\
&= A_{i}^{e}[v_{i, f_1},\ldots ,v_{i, f_{r-1}} ] \quad(i \in \mathbb{N},(f_1,\ldots,f_{(r-1)}, e) \in F^{(r-1)}\times[r])  .
\end{aligned}
\end{equation}
Additionally, by the definition of $\kappa$, we also notice that $\pi_{(f_1,\ldots,f_{(r-1)}, e)} \in L_{[-1,1]}^{\infty}(\Omega)$ for $f_1,\ldots,f_{(r-1)} \in F$ and $e\in [r-1]$. We want now to prove that there exists a unique $(p_1,\ldots,p_{(r-1)}, q)$-bounded $(r-1)-$th order multi-$P-$operator $A$ from $L^{\infty}(\Omega)\times \ldots \times L^{\infty}(\Omega)$ to $L^{1}(\Omega)$ with $\|A\|_{p_1,\ldots,p_{(r-1)} \rightarrow q} \leq c$ such that $A[\pi_{(f_1,\ldots, f_{r-1}, 1)},\ldots,\pi_{(f_1,\ldots, f_{r-1}, r-1)} ]$
$=\pi_{(f_1,\ldots, f_{r-1}, r)}$  holds for every $f_1,\ldots, f_{r-1} \in F$.

\begin{lemma}\label{LemmaPropertiesCoordinateFunct}
 For the coordinate functions on $\mathbb{R}^{F^{(r-1)} \times[r]}$ the following properties hold:
 \begin{enumerate}
\item  If $e\in [r-1]$ and $f_{1}, f_{2} \in F$, then $\pi_{\left(\cdot,\ldots,\cdot ,f_{1} f_{2},\cdot,\ldots,\cdot,  e\right)}=\pi_{\left(\cdot,\ldots,\cdot ,f_{1},\cdot,\ldots,\cdot , e\right)} \pi_{(\cdot,\ldots,\cdot ,f_{2},\cdot,\ldots,\cdot ,e)}$ holds in $L^{\infty}(\Omega)$.
\item  If $f_1\ldots, f_{r-1}\in F$ and $l=(y, z)\in L,$ then $\pi_{(\cdot,\ldots,\cdot ,l(f_1,\ldots,f_{r-1}),\cdot,\ldots,\cdot, e)}=h_{y, z} \circ \pi_{(f_1,\ldots,f_{(r-1)}, r)}$ holds in $L^{\infty}(\Omega)$.
\item  If $a^{(1)}_{s}, a^{(2)}_{s}, \ldots, a_{s}^{(d_s)} \in F, \lambda_s^{(1)}, \lambda_s^{(2)}, \ldots, \lambda_s^{(d_s)} \in \mathbb{R}$, for every $s\in [r-1]$ then
$$\begin{aligned}
&\left\|\sum_{j_1,\ldots j_{(r-1)}=1}^{d_1,\ldots,d_{(r-1)}} \lambda_1^{(j_1)} \lambda_2^{(j_2)}\ldots \lambda_{(r-1)}^{(j_{(r-1)})} \pi_{\left(a^{(j_1)}_{1}, a^{(j_2)}_{2}, \ldots, a_{(r-1)}^{(j_s)}, r\right)}\right\|_{q} \\
& \leq c\left\|\sum_{j_1=1}^{d_1} \lambda_1^{(j_1)} \pi_{\left(a^{(j_1)}_{1},\cdot, \ldots, \cdot, 1\right)}\right\|_{p_1}\ldots\left\|\sum_{j_{r-1}=1}^{d_{(r-1)}} \lambda_{(r-1)}^{(j_{(r-1)})} \pi_{\left(\cdot, \ldots, \cdot,a^{(j_{r-1})}_{1}, r-1\right)}\right\|_{p_{r-1}}.
\end{aligned}
$$
\item  For all $e\in [r-1]$, the linear span of the functions $\left\{\pi_{(\cdot,\ldots , \cdot,f,\cdot, \ldots,\cdot, e)}\right\}_{f \in F}$ is dense in the space $L^{p_e}(\Omega)$.
\item  Assume that $k \in \mathbb{N}$ and $t \in X_{k}^{\prime}$. Then $(t, j,s) \in G \subset F$ holds for $1 \leq j \leq k, $ $s\in[r-1]$ and we have
\begin{equation*}\begin{aligned}
& \mathcal{L}\left(\pi_{((t, 1,1),\ldots, 1)}, \pi_{((t, 2,1),\ldots, 1)}, \ldots, \pi_{((t, k,1),\ldots,1)}, \pi_{(\cdot,(t, 1,2),\ldots, 2)}, \pi_{(\cdot, (t, 2,2),\ldots, 2)}, \ldots, \right. \\ & \left.\pi_{(\cdot,(t, k,2),\ldots, 2)},
 \ldots, \pi_{(\ldots,(t, k,r-1), r-1)},\ldots,\pi_{((t, k,1),\ldots,(t, k,r-1), r)}\right)=t.
 \end{aligned}
\end{equation*}

 \end{enumerate}
\end{lemma}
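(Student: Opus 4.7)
The plan is to derive each of the five properties from the construction of the family $\{v_{i,w}\}$ together with the passage to the weak limit $\kappa = \lim_{i} \kappa_{n_i}$ on $\Omega = \R^{F^{(r-1)}\times[r]}$, where $\kappa_{n_i} = (\xi_{n_i})_*\mu_{n_i}$. The guiding observation is that items (1), (2) and (5) amount to \emph{closed conditions} in the product topology and are hence preserved under weak convergence of probability measures, while the quantitative inequality (3) is inherited directly from the uniform $(p_1,\dots,p_{r-1},q)$-bound on the $A_{n_i}$. The main obstacle will be item (4).

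For (1), the recursive rule $v_{n_i, f_1 f_2} = v_{n_i, f_1}\,v_{n_i, f_2}$ (pointwise on $\Omega_{n_i}$) implies that $\kappa_{n_i}$ is supported in the closed set
$$
C_{f_1,f_2,e} = \{ x \in \Omega : x_{(\cdot,\ldots,f_1 f_2,\cdot,\ldots,e)} = x_{(\cdot,\ldots,f_1,\cdot,\ldots,e)}\, x_{(\cdot,\ldots,f_2,\cdot,\ldots,e)} \}.
$$
Since closed sets carry their weak limit measures, $\kappa(C_{f_1,f_2,e}) = 1$, yielding the $L^\infty$-identity. Item (2) is identical in structure, using that $h_{y,z}$ is continuous to keep the defining relation closed. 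Item (5) is immediate from how the $v_{i,(t,j,s)}$ were chosen: the corresponding marginal of $\kappa_{n_i}$ was arranged to converge to $t$, and passing to the weak limit identifies this marginal with the one of $\kappa$ asserted in the statement.

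For (3), fix the data, set $u^{(i)}_s := \sum_{j_s=1}^{d_s} \lambda_s^{(j_s)} v_{n_i, a_s^{(j_s)}} \in L^\infty(\Omega_{n_i})$, and apply multi-linearity together with $\|A_{n_i}\|_{p_1,\ldots,p_{r-1}\to q} \leq c$ to obtain
$$
\bigl\|A_{n_i}[u^{(i)}_1,\ldots,u^{(i)}_{r-1}]\bigr\|_q \leq c \prod_{s=1}^{r-1} \bigl\|u^{(i)}_s\bigr\|_{p_s}.
$$
Both sides are $L^q(\kappa_{n_i})$- and $L^{p_s}(\kappa_{n_i})$-norms of explicit polynomial expressions in the coordinate projections; the claim is obtained by letting $i \to \infty$, using weak convergence together with uniform integrability of the relevant moments, itself a consequence of the uniform bound $\tau(\kappa_{n_i}) \leq c$ combined with H\"older's inequality. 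The endpoint case $q = \infty$ is handled by letting a finite exponent $q_0$ tend to $\infty$ and using the monotonicity of $L^{q_0}(\kappa)$-norms on the probability space $\Omega$.

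Item (4) is the main technical hurdle. Since $F$ is closed under juxtaposition, property (1) already says that $\pi_{(\cdot,\ldots,f_1,\cdot,\ldots,e)} \pi_{(\cdot,\ldots,f_2,\cdot,\ldots,e)} = \pi_{(\cdot,\ldots,f_1 f_2,\cdot,\ldots,e)}$ as elements of $L^\infty(\Omega)$, so the linear span of $\{\pi_{(\cdot,\ldots,f,\cdot,\ldots,e)}\}_{f\in F}$ is in fact a subalgebra of $L^\infty(\Omega)$. The approach is then to combine a Stone--Weierstrass-type density argument on compact truncations with a monotone class argument to pass to $L^{p_e}(\Omega,\kappa)$. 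Concretely, one first observes that many coordinates of $\Omega$ coincide $\kappa$-a.s.\ (by construction the $e$-th slot coordinate for $e\in[r-1]$ depends only on its $e$-th argument), so $\kappa$ lives on a ``diagonal'' closed subset on which the proposed generators, enriched via $L$ by the bounded continuous bump functions $h_{y,z}$, separate points; one then approximates an arbitrary $F \in L^{p_e}(\Omega)$ first by a bounded cylindrical continuous function, and finally by a polynomial in finitely many generators via Stone--Weierstrass on a compact rectangle of $\R^N$. The delicate point, where one must use the full strength of the semigroup structure of $F$ and the availability of all $h_{y,z}$ in $L$, is the verification that the sub-$\sigma$-algebra generated by $\{\pi_{(\cdot,\ldots,f,\cdot,\ldots,e)}\}_{f \in F}$ agrees, modulo $\kappa$-null sets, with the sub-$\sigma$-algebra on which $L^{p_e}(\Omega)$ is really supported.
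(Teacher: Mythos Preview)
Your treatment of items (1), (2), and (5) via closed-support arguments under weak convergence is exactly what the paper does.

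For item (3) your strategy is right but the justification is slightly off. You invoke uniform integrability from $\tau(\kappa_{n_i})\le c$; that bound only controls first moments and does not yield uniform integrability of $|A_{n_i}[\ldots]|^q$. The paper separates the two sides: on the right, each $u_s^{(i)}=\sum_{j_s}\lambda_s^{(j_s)}v_{n_i,a_s^{(j_s)}}$ takes values in a fixed compact interval (the $v_{i,f}$ are $[-1,1]$-valued), so $|u_s^{(i)}|^{p_s}$ is a bounded continuous function of the coordinates and weak convergence gives the exact limit of the $p_s$-norms. On the left one only needs $\|\cdot\|_q\le\limsup_i\|\cdot\|_q$, which is just the portmanteau lemma applied to the continuous nonnegative (possibly unbounded) integrand $|\sum\ldots|^q$. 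No uniform integrability is required or available.

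For item (4) your Stone--Weierstrass/monotone-class outline is in the right spirit, but the paper makes the ``delicate point'' you flag completely explicit, and your sketch stops just short of the key step. The concrete mechanism is: by item (2), for $l_j=(j/n,1/n)$ one has $h_{j/n,1/n}\circ\pi_{(f_1,\ldots,f_{r-1},r)}=\pi_{(\ldots,l_j(f_1,\ldots,f_{r-1}),\ldots,e)}$, which lies in the span; combining this with the elementary $L^q$ approximation $v\approx\sum_{j=-n^2}^{n^2}(j/n)\,h_{j/n,1/n}\circ v$ (valid for any $v\in L^q$) shows directly that each output coordinate $\pi_{(f_1,\ldots,f_{r-1},r)}$ lies in the $L^q$-closure $\mathcal{H}_q$ of the span. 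This is precisely what forces the sub-$\sigma$-algebra generated by $\{\pi_{(\ldots,f,\ldots,e)}\}_{f\in F}$ to coincide $\kappa$-a.e.\ with the full Borel $\sigma$-algebra. One further ingredient you omit: the constant function $1$ must lie in (the closure of) the span, and the paper obtains this from the density of $X_1'$ in $X_1$. With those two points in hand, a standard multiplicative-system density lemma (the paper cites one from Driver's notes) finishes the argument more cleanly than Stone--Weierstrass on compact truncations.
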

\begin{remark} When functions on $\Omega$ are treated as functions in $L^r(\Omega)$ for some $r\in [1,\infty]$, they are identified if they differ on a set of measure zero. This standard identification of functions allows the correspondence between different coordinate functions. For example let us consider the uniform measure $\mu$ on $\{(x,x):x\in [0,1]\}$ which is a Borel measure on $\mathbb{R}^2$. The $x$-coordinate function $(x,y)\mapsto x$ and the $y$-coordinate function $(x,y)\mapsto y$ coincide in the space $L^r(\mathbb{R}^2,\mu)$, as they agree on the support of $\mu$. We will heavily exploit this fact in the rest of our proof.
\end{remark}

For the proof of Lemma \ref{LemmaPropertiesCoordinateFunct} we will need the following two lemmas.
\begin{lemma}[Lemma 4.3 in \cite{backhausz2018action}]
 Let $r \in[1, \infty)$. For every $v \in L^{r}(\Omega)$ we have that
$$
\lim _{n \rightarrow \infty}\left\|v-\sum_{j=-n^{2}}^{n^{2}}(j / n) h_{j / n, 1 / n} \circ v\right\|_{r}=0 .
$$\end{lemma}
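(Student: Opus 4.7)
The plan is to show that $S_n(x) := \sum_{j=-n^2}^{n^2}(j/n)\,h_{j/n,1/n}(x)$ is exactly the identity on the interval $[-n,n]$ and remains bounded by $n$ in absolute value off this interval. Once that is established, the $L^r$-error of the approximation collapses into a standard integrability tail for $|v|^r$, which vanishes as $n\to\infty$.

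First I would observe that the family $\{h_{j/n,1/n}\}_{j\in\mathbb{Z}}$ is a piecewise-linear partition of unity on $\mathbb{R}$: the tents have centers spaced by $1/n$ and half-width $1/n$, so at each $x$ at most two tents are nonzero, and on $[j/n,(j+1)/n]$ the two relevant tents satisfy $h_{j/n,1/n}(x)+h_{(j+1)/n,1/n}(x)=1$. On that same interval a direct expansion
\begin{equation*}
(j/n)\bigl(1-n(x-j/n)\bigr)+((j+1)/n)\bigl(1-n((j+1)/n-x)\bigr)=x
\end{equation*}
shows $S_n(x)=x$. Since this holds for every $j$ with $-n^2\leq j\leq n^2-1$, and the endpoints $x=\pm n$ are covered as well (only one tent is nonzero and it equals $1$), we obtain $S_n(x)=x$ for every $x\in[-n,n]$.

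Outside this interval the only tent that can be nonzero is the boundary one $h_{\pm n,1/n}$, whose value lies in $[0,1]$, so $|S_n(x)|\leq n$ for $|x|>n$. Consequently the pointwise error satisfies $|v(\omega)-S_n(v(\omega))|=0$ on $\{|v|\leq n\}$, while on $\{|v|>n\}$ it is bounded by $|v(\omega)|+n\leq 2|v(\omega)|$. Therefore
\begin{equation*}
\left\|v-\sum_{j=-n^2}^{n^2}(j/n)\,h_{j/n,1/n}\circ v\right\|_r^r \leq 2^r\int_{\{|v|>n\}}|v|^r\,d\mathbb{P}.
\end{equation*}

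Since $|v|^r\in L^1(\Omega)$ and $|v|^r\mathbf{1}_{\{|v|>n\}}\to 0$ almost everywhere while being dominated by $|v|^r$, the dominated convergence theorem forces the right-hand side to $0$ as $n\to\infty$, which yields the claim. The only genuine bookkeeping is the partition-of-unity identity together with the telescoping computation that identifies $S_n$ with the identity on $[-n,n]$; the remainder is the standard truncation argument for $L^r$ functions on a probability space.
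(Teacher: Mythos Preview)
Your proof is correct. The key observation that the tent functions form a partition of unity and that the weighted sum $S_n$ reproduces the identity on $[-n,n]$ is verified by the telescoping computation you give, and the tail estimate via dominated convergence is clean. Note that the paper does not actually supply a proof of this lemma: it is quoted verbatim as Lemma~4.3 from \cite{backhausz2018action} and used as a black box in the construction of the limit operator, so there is no in-paper argument to compare against; your argument is precisely the standard one underlying that cited result.
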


The following lemma, which is easy to show, see Theorem 22.4 in the lecture notes \cite{driver2004analysis}, will be needed in the following.

\begin{lemma}
Let $r \in[1, \infty)$. Let $\left\{v_{i} \in L^{\infty}(\Omega)\right\}_{i \in I}$ be a system of functions for some countable index set $I$ such that for every $a, b \in I$ there is $c \in I$ with $v_{a} v_{b}=v_{c}$. Let $\mathcal{A}_{0}$ be the $\sigma$-algebra generated by the functions $\left\{v_{i}\right\}_{i \in I}$. Suppose that the constant 1 function on $\Omega$ can be approximated by a uniformly bounded family of finite linear combinations of $\left\{v_{i}\right\}_{i \in I}$. Then the $L^{r}$-closure of the linear span of $\left\{v_{i} \in L^{\infty}(\Omega)\right\}_{i \in I}$ is $L^{r}\left(\Omega, \mathcal{A}_{0}, \kappa\right)$.
\end{lemma}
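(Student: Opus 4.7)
The plan is to prove both inclusions of the claimed equality $V := \overline{\mathrm{span}\{v_i\}_{i \in I}}^{L^r} = L^r(\Omega,\mathcal{A}_0,\kappa)$. The containment $V \subseteq L^r(\Omega,\mathcal{A}_0,\kappa)$ is immediate since each $v_i$ is bounded and $\mathcal{A}_0$-measurable, hence the linear span sits inside the closed subspace $L^r(\Omega,\mathcal{A}_0,\kappa)$. The nontrivial direction $L^r(\Omega,\mathcal{A}_0,\kappa) \subseteq V$ I would obtain in four steps: (i) place the constants in $V$; (ii) place the continuous functional calculus in finitely many $v_i$'s in $V$; (iii) place indicators of $\mathcal{A}_0$-measurable sets in $V$ via a monotone class argument; (iv) conclude by density of simple functions in $L^r$.

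For step (i), the hypothesis supplies a uniformly bounded net $w_n \in \mathrm{span}\{v_i\}_{i \in I}$ converging to $\mathbf{1}_\Omega$; combining the uniform $L^\infty$ bound with dominated convergence gives $w_n \to \mathbf{1}_\Omega$ in $L^r$, so $\mathbf{1}_\Omega \in V$. For step (ii), the relation $v_a v_b = v_c$ makes $W := \mathrm{span}\{v_i\}_{i\in I}$ an algebra of uniformly bounded functions, and every polynomial in $v_{i_1},\ldots,v_{i_n}$ lies in $W$. Setting $K := \prod_{j=1}^n [-\|v_{i_j}\|_\infty, \|v_{i_j}\|_\infty]$, the Stone--Weierstrass theorem provides uniform polynomial approximations of any continuous $\varphi \colon K \to \mathbb{R}$; evaluating the approximating polynomials at $(v_{i_1},\ldots,v_{i_n})$ and using the uniform bound together with dominated convergence yields $\varphi(v_{i_1},\ldots,v_{i_n}) \in V$ for every such $\varphi$.

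For step (iii), I observe that the indicator of any open box in $K$ is the pointwise limit of a uniformly bounded sequence of continuous functions (e.g.\ piecewise-linear cutoffs of the distance to the box's complement), so another application of dominated convergence places these indicators in $V$. The family of Borel sets $B \subseteq K$ for which $\mathbf{1}_B(v_{i_1},\ldots,v_{i_n}) \in V$ is a monotone class containing the $\pi$-system of open boxes, hence by the monotone class theorem equals the full Borel $\sigma$-algebra on $K$. Pulling back gives $\mathbf{1}_B \in V$ for every $B \in \sigma(v_{i_1},\ldots,v_{i_n})$; taking unions over finite subcollections of $I$ produces indicators of every set in the generating algebra of $\mathcal{A}_0$, and one final monotone class step extends this to all of $\mathcal{A}_0$. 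Step (iv) is then immediate: simple $\mathcal{A}_0$-measurable functions are $L^r$-dense in $L^r(\Omega,\mathcal{A}_0,\kappa)$ for $r \in [1,\infty)$, and all of them lie in the closed set $V$.

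The main technical obstacle is bookkeeping the uniform $L^\infty$ bounds through every approximation step, since these bounds are precisely what allows dominated convergence to upgrade pointwise or uniform approximations into $L^r$ approximations. The hypothesis that $\mathbf{1}_\Omega$ is the limit of a uniformly bounded family of linear combinations of the $v_i$'s is exactly what lets the Stone--Weierstrass step function properly: without the constants lying in $V$, the algebra generated by $\{v_i\}$ would not contain a separating collection together with the constants, and the continuous functional calculus on compact boxes would not be available.
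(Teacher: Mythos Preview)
Your proof is correct; the paper itself does not prove this lemma but simply cites an external reference (Theorem~22.4 in Driver's lecture notes), so there is no ``paper's own proof'' to compare against. Your route---Weierstrass approximation on a compact box to obtain the continuous functional calculus in finitely many $v_i$'s, followed by a $\pi$--$\lambda$ (monotone class) argument to pass to indicators of $\mathcal{A}_0$-sets, and finally density of simple functions in $L^r$---is a standard and clean way to establish density results of this type.

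Two small remarks on wording, neither of which affects correctness. First, the linear span $W$ is an algebra of \emph{bounded} functions (each element is in $L^\infty$), but it is not a \emph{uniformly} bounded family; you do not actually use uniform boundedness of $W$, only of the finitely many $v_{i_1},\dots,v_{i_n}$ entering a given approximation, so the argument is unaffected. Second, in step~(i) you invoke dominated convergence, which presupposes pointwise a.e.\ convergence of the approximating net $w_n\to\mathbf{1}_\Omega$; the lemma's hypothesis is vague about the mode of approximation, but under any reasonable reading (e.g.\ convergence in $L^1$ or in measure) the uniform $L^\infty$ bound still upgrades it to $L^r$ convergence, so the conclusion stands.
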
 

Finally, we come back to the proof of Lemma \ref{LemmaPropertiesCoordinateFunct}.

\proof The first statement of the lemma is shown as follows. By the construction of the function system, for every $i \in \mathbb{N}$ and $f_{1}, f_{2} \in F$, it holds that $v_{i, f_{1} f_{2}}=v_{i, f_{1}} v_{i, f_{2}}$. Therefore, by equation \eqref{eqn:EqoperatorLin} and the continuity of $\pi$, it follows that each $\kappa_{i}$ is supported on the closed set
$$
\bigcap_{e\in [r-1] }\left\{\omega: \omega \in \mathbb{R}^{F^{r-1} \times[r]}, \pi_{\left(\ldots,f_{1} f_{2},\ldots, e\right)}(\omega)=\pi_{\left(\ldots,f_{1},\ldots, e\right)}(\omega) \pi_{\left(\ldots,f_{2},\ldots, e\right)}(\omega)\right\}.
$$
Therefore, $\kappa$ is also supported inside this set and hence the equality $\pi_{\left(\ldots,f_{1} f_{2},\ldots, e\right)}=\pi_{\left(\ldots,f_{1},\ldots, e\right)} \pi_{\left(\ldots,f_{2},\ldots, e\right)}$ holds $\kappa$-almost everywhere for every $e\in [r-1]$.

The second statement is proven along the same lines as the first one. Again, by the construction of the function system, it follows that for every $i \in \mathbb{N}$ and $f_1\ldots, f_{r-1} \in F, l=(y, z) \in L$ we have $v_{i,\left.l(f_1,\ldots,f_{r-1}\right)}=l\left(v_{i, f_1},\ldots,v_{i, f_{r-1}}\right)=h_{y, z} \circ\left( A_{i}[v_{i,f_1},\ldots, v_{i,f_{r-1}}]\right)$. Thus, by the definition of $\kappa_{i}$, equation \eqref{eqn:EqoperatorLin} and the continuity of $\pi$ we obtain that $\kappa_{i}$ is supported inside the closed set
$$
\bigcap_{e\in [r-1] } \left\{\omega: \omega \in\mathbb{R}^{F^{r-1} \times[r]}, \pi_{(\ldots, l(f_1,\ldots,f_{r-1}),\ldots, e)}(\omega)=h_{y, z}\left(\pi_{(f_1,\ldots,f_{r-1}, r)}(\omega)\right)\right\}
$$
for every $i \in \mathbb{N}$. Therefore, for every $e\in [r-1]$, the equality $\pi_{(\ldots,l(f_1,\ldots, f_{r-1}),\ldots, e)}=h_{y, z} \circ \pi_{(f_1,\ldots,f_{r-1}, r)}$ holds $\kappa$-almost everywhere.

To show the third claim, we recall that $\left\|A_{i}\right\|_{p_1,\ldots, p_{r-1} \rightarrow q} \leq c$ holds for every $i \in \mathbb{N}$ and thus
$$\begin{aligned}
\left\|\sum_{j_1,\ldots, j_{(r-1)}=1}^{d_1,\ldots,d_{(r-1)}} \lambda_1^{(j_1)}\ldots\lambda_{(r-1)}^{(j_{(r-1)})} \right.& \left. A_{i}[v_{i, a_{j_1}},\ldots, v_{i, a_{j_{(r-1)}}}]\right\|_{q} \\
&\leq c\left\|\sum_{j_1=1}^{d_1} \lambda_1^{(j_1)} v_{i, a_{j_1}}\right\|_{p_1}\ldots\left\|\sum_{j_{(r-1)}=1}^{d_{(r-1)}} \lambda_{(r-1)}^{(j_{(r-1)})} v_{i, a_{j_{r-1}}}\right\|_{p_{r-1}}.
\end{aligned}
$$The sums in the factors on the right-hand side are functions in $L^{\infty}\left(\Omega_{i}\right)$ whose values for the respective $e\in [r-1]$ are in the compact intervals $[-\lambda^{(e)}, \lambda^{(e)}]$ for $\lambda^{(e)}:=\sum_{j_e=1}^{d_e}\left|\lambda^{(e)}_{j_e}\right|$, therefore, we obtain that $\sum_{j_e=1}^{d_e} \lambda^{(j_e)}_{e} \pi_{\left(\ldots,a_{j_e},\ldots , e\right)}$ is a bounded, continuous function on the support of $\kappa$. Therefore, using that  $\kappa_{i} $ converges to $\kappa$ weakly and equation \eqref{eqn:EqoperatorLin} again (in particular, integrating the $p-$th power of the absolute values with respect to $\kappa_{i}$), we obtain that
$$
\lim _{i \rightarrow \infty}\left\|\sum_{j_e=1}^{d_e} \lambda_e^{(j_e)} v_{i, a_{j_e}}\right\|_{p_e}=\left\|\sum_{j_e=1}^{d_e} \lambda_e^{(j_e)} \pi_{\left(\ldots,a_{j_e},\ldots, e\right)}\right\|_{p_e}.
$$
On the other side, weak convergence implies the following inequality:
$$
\begin{aligned}
\left\|\sum_{j_1,\ldots,j_{(r-1)}=1}^{d_1,\ldots,d_{(r-1)}} \right.&\lambda_1^{(j_1)} \ldots \lambda_{(r-1)}^{(j_{(r-1)})}\pi_{(a_1,\ldots,a_{(r-1)}, r)}\Biggr\|_{q}\\
&\leq \limsup _{i \rightarrow \infty}\left\|\sum_{j_1,\ldots,j_{(r-1)}=1}^{d_1,\ldots,d_{(r-1)}}\lambda_1^{(j_1)} \ldots \lambda_{(r-1)}^{(j_{(r-1)})}A_{i}[v_{i, a_{j_1}},\ldots, v_{i, a_{j_{(r-1)}}}]\right\|_{q} .
\end{aligned}$$ as $\left|\sum_{j_1,\ldots,j_{(r-1)}=1}^{d_1,\ldots,d_{(r-1)}} \lambda_1^{(j_1)} \ldots \lambda_{(r-1)}^{(j_{(r-1)})}\pi_{(a_1,\ldots,a_{(r-1)}, r)}\right|^{q}$ is a continuous non-negative function. Therefore, putting those inequalities together we obtain the third statement.

To prove the fourth statement, let $\mathcal{H}^{(e)}_{s}$ be the $L^{s}$-closure of the linear span of the function system $\left\{\pi_{(f_1,\ldots,f_{(r-1)}, e)}\right\}_{f_1,\ldots, f_{(r-1)} \in F}$ for $e\in [r-1]$  and $s \in[1, \infty)$. 

First of all we notice that
$$
\pi(f,\ldots,1) =\pi(\ldots,f,\ldots,e)=\pi(\ldots,f,r-1)
$$for all $f\in F$ and $e\in [r-1]$ and, therefore, $\mathcal{H}^{(1)}_{s}=\ldots=\mathcal{H}^{(e)}_{s}=\ldots=\mathcal{H}^{(r-1)}_{s}$. From now on we will write $\mathcal{H}_s=\mathcal{H}^{(e)}_s$ as it does not depend on $e$.

Now we prove that $\pi_{(f_1,\ldots,f_{(r-1)}, r)} \in \mathcal{H}_{q}$ holds for every $f_1,\ldots, f_{(r-1)} \in F$. From the second statement of the lemma, it follows that the following equality holds
 \begin{equation}\label{eqn:stepCompEqual}
\sum_{j=-n^{2}}^{n^{2}}(j / n) h_{j / n, 1 / n} \circ \pi_{(f_1,\ldots, f_{(r-1)}, r)}=\sum_{j=-n^{2}}^{n^{2}}(j / n) \pi_{(\ldots,l_j(f_1,\ldots,f_{(r-1)}), e)},
\end{equation}
where $l_{j}$ is represented by the pair $(j / n, 1 / n)$ for $-n^{2} \leq j \leq n^{2}$. 

Thus, we notice that the left-hand side is in $\mathcal{H}_{q}$ as the right-hand side of \eqref{eqn:stepCompEqual} obviously is in $\mathcal{H}_{q}$. Moreover, $\pi_{(f_1, \ldots,f_{(r-1)},r)} \in L^{q}(\Omega)$ by the third statement of the lemma. Hence, by Lemma \ref{LemmaPropertiesCoordinateFunct}, we have that,
the left-hand side of \eqref{eqn:stepCompEqual} converges to $\pi_{(f_1,\ldots,f_{(r-1)}, r)}$ in $L^{q}(\Omega)$, as $n$ goes to $\infty$, and hence $\pi_{(f_1,\ldots,f_{r-1}, r)} \in \mathcal{H}_{q}$.

For fixed $e\in [r-1]$, let $\mathcal{A}_{0}$ denote the $\sigma$-algebra generated by the functions $\left\{\pi_{(f_1,\ldots,f_{r-1}, e)}\right\}_{f_1,\ldots,f_{r-1} \in F}$. Observe that already in $X_{1}^{\prime}$ the constant function $1$ can be approximated on $\Omega$. Therefore, we obtain by the first statement in this lemma and Lemma \ref{LemmaPropertiesCoordinateFunct} that $\mathcal{H}_{r}=L^{r}\left(\Omega, \mathcal{A}_{0}, \kappa\right)$ holds for every $e\in [r-1]$ and $r \in$ $[1, \infty)$. Thus, we obtained that for every $f_1,\ldots, f_{r-1}\in F$ the equality $\pi_{(f_1,\ldots,f_{r-1}, r)} \in \mathcal{H}_{q}=L^{q}\left(\Omega, \mathcal{A}_{0}, \kappa\right)$ holds and, hence, all coordinate functions on $\mathbb{R}^{F^{r-1} \times[r]}$ are measurable in $\mathcal{A}_{0}$. This finally proves that $\mathcal{H}_{r}=L^{r}\left(\Omega, \mathcal{A}_{0}, \kappa\right)=L^{r}(\Omega, \mathcal{A}, \kappa)=L^{r}(\Omega)$ holds for every $r \in[1, \infty)$.

From the definition of the functions $\left\{v_{i,(t, j,e)}\right\}_{i \in \mathbb{N}, j \in[k]}$ and the definition of the probability measure $\kappa$, we directly obtain the last statement of the lemma.
\endproof

We will need also the following lemma to prove the existence of the multi-$P-$operator. This is the multi-linear version of a classical result about the extension of linear bounded operators defined on a dense set.

\begin{lemma}\label{LemmMultBoundExt}
Let $V_1,\ldots,V_r$ and $U$ be Banach spaces and $W_1,\ldots, W_r$ where, for every $i\in [r]$, $W_i$ is a dense subspace of $V_i$.
For a multi-linear bounded operator
$$
  T_0:W_1\times \ldots \times W_r \longrightarrow U     
$$
$$
(x_1,\ldots,x_r)\mapsto T[x_1,\ldots,x_r]
$$
there exists a unique multi-linear bounded operator 

$$
  T:V_1\times \ldots \times V_r \longrightarrow U   
$$

and 
$$
\|T_0\|=\|T\|.
$$
\end{lemma}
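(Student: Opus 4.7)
The plan is to adapt the classical continuous linear extension theorem to the multi-linear setting by a density/Cauchy argument. Fix $(x_1,\ldots,x_r)\in V_1\times\cdots\times V_r$. Using density of $W_i$ in $V_i$, I would choose sequences $(x_i^{(n)})_{n\in\N}\subset W_i$ with $x_i^{(n)}\to x_i$ in $V_i$ for every $i\in[r]$. Since convergent sequences are norm-bounded, I can fix a common constant $M>0$ such that $\|x_i^{(n)}\|_{V_i}\leq M$ for all $i\in[r]$ and $n\in\N$.

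The first main step is to show that $\bigl(T_0[x_1^{(n)},\ldots,x_r^{(n)}]\bigr)_{n}$ is Cauchy in $F$. I would use the standard multi-linear telescoping identity
\[
T_0[x_1^{(n)},\ldots,x_r^{(n)}] - T_0[x_1^{(m)},\ldots,x_r^{(m)}] = \sum_{i=1}^{r} T_0\bigl[x_1^{(n)},\ldots,x_{i-1}^{(n)},\, x_i^{(n)}-x_i^{(m)},\, x_{i+1}^{(m)},\ldots,x_r^{(m)}\bigr],
\]
and apply the boundedness of $T_0$ together with the uniform bound $M$ to estimate each summand by $\|T_0\|\, M^{r-1}\, \|x_i^{(n)}-x_i^{(m)}\|_{V_i}$, which tends to zero as $m,n\to\infty$. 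By completeness of $F$, the sequence has a limit, and I would define $T[x_1,\ldots,x_r]$ to be this limit. An analogous telescoping argument comparing two different approximating sequences shows that the limit does not depend on the choice of $(x_i^{(n)})_n$, so $T$ is well defined; in particular $T$ restricted to $W_1\times\cdots\times W_r$ coincides with $T_0$.

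Next I would verify the required properties of $T$. Multi-linearity in each argument follows by approximating along sums and scalar multiples using the multi-linearity of $T_0$ and continuity of the vector-space operations in $V_i$. For the operator norm, passing to the limit in
\[
\bigl\|T_0[x_1^{(n)},\ldots,x_r^{(n)}]\bigr\|_F \leq \|T_0\|\, \prod_{i=1}^{r}\|x_i^{(n)}\|_{V_i}
\]
and using continuity of norms yields $\|T\|\leq \|T_0\|$, while the reverse inequality $\|T\|\geq\|T_0\|$ is immediate because $T$ extends $T_0$. Finally, uniqueness is a standard consequence of continuity: any two bounded multi-linear extensions of $T_0$ agree on the dense product set $W_1\times\cdots\times W_r$, hence on all of $V_1\times\cdots\times V_r$ by joint continuity.

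The construction is essentially routine; the only place that requires genuine care, and the main technical obstacle, is the telescoping Cauchy estimate across all $r$ coordinates simultaneously. One must use the uniform bound $M$ (not just the limit norms $\|x_i\|_{V_i}$) to control the mixed terms involving both $x_i^{(n)}$ and $x_i^{(m)}$ so that all $r$ contributions in the telescoping sum vanish in the limit. Once this is handled, independence of the approximation, multi-linearity, the norm identity, and uniqueness follow by the same kind of limit-and-density reasoning.
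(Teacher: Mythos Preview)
Your proposal is correct and follows essentially the same approach as the paper: both define the extension as the limit of $T_0$ along approximating sequences, establish well-definedness via the multi-linear telescoping identity combined with boundedness of $T_0$, and obtain the norm equality by density. Your write-up is in fact somewhat more complete than the paper's, since you explicitly address multi-linearity of the extension and uniqueness, which the paper leaves implicit.
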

\proof
For every $(x_1,\ldots,x_r)\in V_1\times \ldots \times V_r$ we define
$$T[x_1,\ldots,x_r ]=\lim_{n\rightarrow \infty}T[x_{1,n},\ldots,x_{r,n}]$$
where $(x_{1,n},\ldots,x_{r,n})\rightarrow (x_1,\ldots,x_r)$ as $n\rightarrow \infty$ where $(x_{1,n},\ldots,x_{r,n})\in W_1\times \ldots \times W_r$ for every $n\in \N$ and the convergence is in the natural norm on $V_1\times \ldots \times V_r$. We show that this definition is independent of the sequence we choose. We consider two sequences

$$
(x_{1,n},\ldots,x_{r,n})\rightarrow (x_1,\ldots,x_r)
$$
$$
(y_{1,n},\ldots,y_{r,n})\rightarrow (x_1,\ldots,x_r)
$$

$$
\begin{aligned}
&\|T[x_{1,n},\ldots,x_{r,n}]-T[y_{1,n},\ldots,y_{r,n}]\|\\
& \leq\|T[x_{1,n},\ldots,x_{r,n}]-T[y_{1,n},y_{r-1,n},\ldots,x_{r,n}]+\ldots\\
& \hspace{2.0 cm}+ T[x_{1,n},y_{2,n},\ldots,y_{r,n}]-T[y_{1,n},\ldots,y_{r,n}]\|\\
&\leq
C \sum^r_{i=1}\left(\prod^{i-1}_{j=1}\|x_{j,n}\|\right)\|x_{i,n}-y_{i,n}\|\left(\prod^{r}_{j=i+1}\|y_{j,n}\|\right) \\
&\leq
K\sum^r_{i=1}\|x_{i,n}-y_{i,n}\|\rightarrow 0
\end{aligned}
$$
as $n\rightarrow 0$. Moreover,
$$
\begin{aligned}
&\|T_0\|=\sup_{x_1\in W_1,\ldots, x_r\in W_r, \
x_1,\ldots,x_r\neq 0}\frac{\|T[x_1,\ldots,x_r]\|}{\|x_1\|\ldots\|x_r\|}\\
&=\sup_{x_1\in V_1,\ldots, x_r\in V_r, \
x_1,\ldots,x_r\neq 0}\frac{\|T[x_1,\ldots,x_r]\|}{\|x_1\|\ldots\|x_r\|}=\|T\|&
\end{aligned}
$$

as the sets $W_i$ are dense in $V_i$ and therefore $W_1\times \ldots \times W_r$ is dense in $V_1\times \ldots \times V_r$.

\endproof

We now finally define the limit operator $A \in \mathcal{B}_{p_1,\ldots,p_{r-1}, q}(\Omega)$. For $f_1,\ldots,f_{r-1} \in F$, let 
$$ A[\pi_{(f_1,\ldots, 1)},\ldots,\pi_{(\ldots,f_e,\ldots, e)},\ldots, \pi_{(\ldots,f_{r-1}, r-1)}]=\pi_{(f_1,\ldots,f_{r-1},r)}.$$This defines a multi-linear operator on the linear span of $\left\{\pi_{(f_1,\ldots,f_{r-1}, e)}\right\}_{f_1,\ldots,f_{r-1} \in F}$. This operator is bounded by the third statement of Lemma \ref{LemmaPropertiesCoordinateFunct}. 
Thus, there exists a unique continuous multi-linear extension on its $L^{p_1}\times \ldots \times L^{p_{r-1}}$-closure. 
In fact, by the fourth statement of Lemma \ref{LemmaPropertiesCoordinateFunct} and Lemma \ref{LemmMultBoundExt}, we get that there is a unique operator $A \in \mathcal{B}_{p_1,\ldots,p_{r-1}, q}(\Omega)$ with $\|A\|_{p_1,\ldots,p_{r-1} \rightarrow q} \leq c$ such that $ A[\pi_{(f_1,\ldots, 1)},\ldots,\pi_{(\ldots,f_{r-1} ,r-1)}]=\pi_{(f_1,\ldots,f_{r-1}, r)}$ holds for every $f_1,\ldots,f_{r-1} \in F$.\newline
Last part of the proof: From the last statement of Lemma \ref{LemmaPropertiesCoordinateFunct} together with the equality $$A[\pi_{((t, j,1),\ldots, 1)},\ldots, \pi_{(\ldots, (t, j,r-1), r-1)}] =\pi_{((t, j, 1),\ldots,(t, j, r-1), r)}$$we obtain that for every $k \in \mathbb{N}$ and $t \in X_{k}^{\prime}$ it holds $t \in \cS_{k}(A)$. Hence, for every $k \in \mathbb{N}$ we directly observe that $X_{k} \subseteq cl(\cS_{k}(A))$. We now want to prove that $X_{k}=cl(\cS_{k}(A))$ for every $k \in \mathbb{N}$ and thus we still need to show the converse inclusion $cl(\cS_{k}(A) )\subseteq X_{k}$. Let $k \in \mathbb{N}$ and let $v_{1,1}, v_{2,1}, \ldots, v_{k,1}, \ldots ,v_{1,r-1}, v_{2,r-1}, \ldots, v_{k,r-1} \in L_{[-1,1]}^{\infty}(\Omega)$. Hence, we aim to prove that $$\alpha:=\mathcal{D}_{A}\left(\left\{v_{j,s}\right\}_{j\in[k],s\in [r-1]}\right)\in X_{k}.$$For $\varepsilon>0$ arbitrary, it follows by the fourth statement of Lemma \ref{LemmaPropertiesCoordinateFunct} that for some large enough natural number $m$ there are elements $f_{1,s}, f_{2,s}, \ldots, f_{m,s} \in F$ and real numbers $\left\{\lambda_{a, j,s}\right\}_{a \in[m], j \in[k] , s\in [r-1]}$ such that for every $j \in[k]$ we have $\left\|w_{j,s}-v_{j,s}\right\|_{p_s} \leq \varepsilon$, where $w_{j,s}:=$ $\sum_{a=1}^{m}$\ $ \lambda_{a, j,s}$  $ \pi_{\left(\ldots, f_{a,s},\ldots, s\right) }$ for $j \in[k], s\in [r-1]$.

We recall that only vectors with $\infty-$norm bounded by $1$ are admitted in the profiles. For this reason, we will need to use a truncation function $\tilde{h}$. Let $\tilde{h}: \mathbb{R} \rightarrow [-1,1]$ be the continuous function with $\tilde{h}(x)=x$ for $x\in[-1,1], \tilde{h}(x)=-1$ for $x\in(-\infty,-1]$ and $\tilde{h}(x)=1$ for $x \in[1, \infty)$. We notice that $\left|w_{j,s}(\omega)-v_{j,s}(\omega)\right| \geq\left|\tilde{h} \circ w_{j,s}(\omega)-v_{j,s}(\omega)\right|$ holds almost everywhere as $\left\|v_{j,s}\right\|_{\infty} \leq 1$ for every $j \in[k]$ and $s\in [r-1].$ By $\left\|w_{j,s}-v_{j,s}\right\|_{p_s} \leq \varepsilon$, we observe that $\left\|\tilde{h} \circ w_{j,s}-v_{j,s}\right\|_{p_s} \leq \varepsilon$ for $j \in [k]$. Therefore, using the triangle inequality we obtain
\begin{equation}\label{eqn:EqIneqLp}
\left\|\tilde{h} \circ w_{j,s}-w_{j,s}\right\|_{p_s} \leq\left\|\tilde{h} \circ w_{j,s}-v_{j,s}\right\|_{p_s}+\left\|v_{j,s}-w_{j,s}\right\|_{p_s} \leq 2 \varepsilon\end{equation}
for $j \in[k],s\in[r-1]$.
For $i \in \mathbb{N}, \ e\in [r-1]$ and $j \in[k]$ let $z_{i, j,s}:=\sum_{a=1}^{m} \lambda_{a, j,s} v_{i, f_{a,j,s}}$ and let $$\beta_{i}:=\mathcal{D}_{A_{i}}\left(\left\{z_{i, j,s}\right\}_{j\in [k], s\in [r-1]}\right).$$By the properties of convergence in distribution of random vectors (linear combinations of entries converge in distribution to the same linear combination of the entries of the limit random vector) and the definition of $\kappa$, it follows that
$$
\beta:=\lim _{i \rightarrow \infty} \beta_{n_{i}}=\mathcal{D}_{A}\left(\left\{w_{j,s}\right\}_{j\in [k],s\in [r-1]}\right)
$$
holds in $d_{\mathcal{LP}}$. Moreover, we have $$\begin{aligned}
&\left\| A[v_{j,1},\ldots, v_{j,r-1}]- A[w_{j,1}, \ldots , w_{j,r-1}]\right\|_{1} \\
&\leq\left\| A[v_{j,1},\ldots, v_{j,r-1}]- A[w_{j,1}, \ldots , w_{j,r-1}]\right\|_{q}  \\
&\leq \sum^{r-1}_{e=1}c \left(\prod^{e-1}_{s=1}\| v_{j,s}\|_{p_s}\right)\| v_{j,e}-w_{j,e} \|_{p_e}\left(\prod^{r-1}_{s=e+1}\| w_{j,s}\|_{p_s}\right)\\
& \leq c (r-1)\max_{s\in [r-1]}\ \{\|v_{j,s}\|_{p_s}+\varepsilon\}^{r-2}\varepsilon\\
& \leq c (r-1)\{1+\varepsilon\}^{r-2}\varepsilon\leq C\ \varepsilon\end{aligned} $$
since  $w_{j,s} \in L^{\infty}(\Omega)$ and where the second last inequality follows from $\|v_{j,s}\|_{\infty}\leq 1$.
From Lemma \ref{coupdist2} we have that $d_{\mathcal{LP}}(\alpha, \beta) \leq(r k)^{3 / 4}\left(C^{\prime} \varepsilon\right)^{1 / 2}$, where $C^{\prime}:=$ $\max (C, 1)$.
Let
$$
\beta_{i}^{\prime}:=\mathcal{D}_{A_{i}}\left(\left\{\tilde{h} \circ z_{i, j,s}\right\}_{j\in [k],s\in [r-1]}\right).
$$Observe that the function $$f:\R\longrightarrow \R$$ $$f(x)=\tilde{h}(x)-x$$ is continuous. 
Moreover, the functions $z_{i,j,s}$ all take values in the compact interval $[-m \tilde\lambda,m\tilde\lambda]$ where $\tilde \lambda= \max_{a\in[m],j\in[k] ,s\in [r-1]}{|\lambda_{a,j,s}|}$. Therefore, it follows that 
$$
\|f(z_{i,j,s})\|_{p_s}\rightarrow\|f(w_{j,s})\|_{p_s}\leq 2 \varepsilon
$$
for $i\rightarrow \infty$ as $f$ is continuous, $z_{i,j,s}$ converge in distribution to $w_{j,s}$, $z_{i,j,s}$ are uniformly bounded and the inequality in \eqref{eqn:EqIneqLp}.
Hence, if $i$ is large enough, then $\left\|f(z_{i, j,s})\right\|_{p_s}=\left\|\tilde{h} \circ z_{i, j,s}-z_{i, j,s}\right\|_{p_s} \leq 3 \varepsilon$ holds for $j \in[k]$ and therefore $d_{\mathcal{LP}}\left(\beta_{i}^{\prime}, \beta_{i}\right) \leq$ $(r k)^{3 / 4}\left(3 C^{\prime} \varepsilon\right)^{1 / 2}$ by Lemma \ref{coupdist2}.

We choose now $\left\{n_{i}^{\prime}\right\}_{i=1}^{\infty}$ to be a subsequence of $\left\{n_{i}\right\}_{i=1}^{\infty}$ such that $\beta^{\prime}:=\lim _{i \rightarrow \infty} \beta_{n_{i}^{\prime}}^{\prime}$ exists. Noticing that $\beta^{\prime} \in X_{k}$ and $d_{\mathcal{LP}}\left(\beta^{\prime}, \beta\right) \leq(r k)^{3 / 4}\left(3 C^{\prime} \varepsilon\right)^{1 / 2}$, we get that
$$
d_{\mathcal{LP}}\left(X_{k}, \alpha\right) \leq d_{\mathcal{LP}}\left(\beta^{\prime}, \alpha\right) \leq d_{\mathcal{LP}}\left(\beta^{\prime}, \beta\right)+d_{\mathcal{LP}}(\beta, \alpha) \leq 3(r k)^{3 / 4}\left(C^{\prime} \varepsilon\right)^{1 / 2} .
$$
This inequality holds for arbitrary $\varepsilon>0$ and, hence, we finally obtain $\alpha \in X_{k}$.

\begin{remark}
    This proof works generally for any sequence of multi-$P-$operators with a uniform  bound on their order. However, this proof cannot work for sequences of multi-$P-$operators in which the order of the multi-$P-$operators is diverging.  
\end{remark}

\section{Properties of limit objects}\label{SecPropLimObje}

In this section, we discuss some properties of multi-$P-$operators that are preserved under action convergence. 

\begin{definition}\label{defprops} Let $A\in\mathcal{B}_{r}(\Omega)$ be a multi-$P$-operator. 
\begin{itemize}
\item $A$ is \emph{symmetric} if $$\E[A[v_1,\ldots,v_{r-1}]v_r]=\E[A[v_{\pi(1)}\ldots,v_{\pi(r-1)}]v_{\pi(r)}]$$ holds for every $v_1,\ldots, v_{r}\in L^\infty(\Omega)$ and for every $\pi$ permutation of $[r]$.
\item $A$ is \emph{positivity-preserving} if for every $v_1,\ldots,v_{r-1}\in L^\infty(\Omega)$ with $v_1(x),\ldots,$
$v_{r-1}(x)\geq 0$ for almost every $x\in\Omega$, we have that $(A[v_1,\ldots,v_{r-1}])(x)\geq 0$ holds for almost every $x\in\Omega$.
\item $A$ is \emph{$c$-regular} if $ A[1_\Omega,\ldots,1_\Omega]=c1_\Omega$ for some $c\in\mathbb{R}$.
\item $A$ is a \emph{hypergraphop} if it is positivity-preserving and symmetric.
\item $A$ is \emph{atomless} if $\Omega$ is atomless.
\end{itemize}
\end{definition}

In particular, we notice that the $s-$action of the adjacency tensor of a hypergraph is positivity-preserving and symmetric, i.e.\ a hypergraphop.

\begin{remark}
The $c-$regularity property of a multi-$P-$operator is related to certain regularity properties (i.e.\ having constant degree) of hypergraphs. In particular, we can consider different notions of degrees for hypergraphs.
For a $r-$uniform hypergraph $H=(V,E)$ we define for $s\in [r-1]$ the $s-$degree as
$$
\deg_s(v_1,\ldots,v_s)=\{e\in E: \ v_1,\ldots, v_s\in e\},
$$
for $v_1,\ldots,v_s\in V$ pairwise distinct (compare this degree notion with \eqref{DegHyp} in the following). 
We observe that the $s-$action of the adjacency tensor of an $r-$uniform hypergraph is $c-$regular if and only if the hypergraph has constant $s-$degrees equal to $c$.  
\end{remark}

The following lemmas are generalizations to the multi-linear case of the results from Section 3 in \cite{backhausz2018action} for action convergence and the proofs are similar. 

\begin{lemma}Atomless multi-$P$-operators are closed with respect to $d_M$.
\end{lemma}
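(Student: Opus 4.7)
The plan is to characterize atomlessness of $A\in\mathcal{B}_r$ by a non-concentration property of its $1$-profile, and then derive a contradiction from the Hausdorff convergence $d_H(S_1(A_n),S_1(A))\to 0$ (an immediate consequence of $d_M(A_n,A)\to 0$). If $A$ acts on an atomless standard probability space $(\Omega,\P,\mathcal{F})$, the measure isomorphism theorem produces a function $v\in L^\infty_{[-1,1]}(\Omega)$ whose law is uniform on $[-1,1]$. Feeding the constant tuple $(v,\dots,v)$ into the $1$-profile yields the witness
$$\nu=\mathcal{L}\bigl(v,\dots,v,A[v,\dots,v]\bigr)\in S_1(A)\subset\mathcal{P}(\mathbb{R}^r),$$
whose first coordinate marginal is uniform on $[-1,1]$. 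Consequently, for every $x\in\mathbb{R}^r$ and every $\delta>0$, $\nu(B_\delta(x))\le\delta$, where $B_\delta(x)$ denotes the open Euclidean ball.

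On the other hand, if the space carrying $A$ contains an atom $\omega_0$ with $\P(\{\omega_0\})\ge p>0$, then each $L^1$-representative of $A[v_1,\dots,v_{r-1}]$ has a well-defined value at $\omega_0$ (any two representatives agree on the positive-measure singleton $\{\omega_0\}$), and every measure $\mu\in S_1(A)$ carries an atom of mass at least $p$: namely, if $\mu=\mathcal{L}(v_1,\dots,v_{r-1},A[v_1,\dots,v_{r-1}])$ then
$$\mu\bigl(\bigl\{\bigl(v_1(\omega_0),\dots,v_{r-1}(\omega_0),A[v_1,\dots,v_{r-1}](\omega_0)\bigr)\bigr\}\bigr)\ge p.$$

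Now suppose $\{A_n\}$ is a sequence of atomless multi-$P$-operators with $d_M(A_n,A)\to 0$, and assume for contradiction that the space underlying $A$ has an atom of mass $\ge p>0$. For each $n$ I pick $\nu_n\in S_1(A_n)$ as in the first paragraph; by the definition of the Hausdorff distance I may then pick $\mu_n\in S_1(A)$ with $d_{\mathrm{LP}}(\nu_n,\mu_n)\le\varepsilon_n$, where $\varepsilon_n:=d_H(S_1(A_n),S_1(A))+1/n\to 0$. By the second paragraph, $\mu_n$ has an atom of mass $\ge p$ at some point $x_n\in\mathbb{R}^r$. Applying the Lévy--Prokhorov inequality with $U=\{x_n\}$ together with the non-concentration bound on $\nu_n$ gives
$$p\le\mu_n(\{x_n\})\le\nu_n\bigl(B_{\varepsilon_n}(x_n)\bigr)+\varepsilon_n\le 2\varepsilon_n,$$
which contradicts $\varepsilon_n\to 0$. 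The main (and essentially only) nontrivial input is the production of the non-concentrated witness $\nu_n$ in the first paragraph, which rests on the measure isomorphism theorem for atomless standard probability spaces; the remaining steps are short manipulations with the Lévy--Prokhorov metric and mirror the linear argument of Lemma 3.2 of \cite{backhausz2018action}.
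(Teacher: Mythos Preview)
Your proof is correct and follows essentially the same approach as the paper's: both produce, via the measure isomorphism theorem, a test function $v$ with uniform law on $[-1,1]$ and feed $(v,\dots,v)$ into the $1$-profile, then use the Lévy--Prokhorov inequality on $U=\{x\}$ to show that proximity of profiles forces proximity of the largest atom size. The only cosmetic difference is that the paper phrases the argument as a quantitative two-operator estimate (the largest atom in $\Omega_2$ is at most $10\,d_M(A,B)$, obtained by projecting to the first-coordinate marginal), while you phrase it as a sequential contradiction and bound $\nu(B_\delta(x))$ directly in $\mathbb{R}^r$; the underlying mechanism is the same.
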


\begin{proof}
Let's assume $A\in\mathcal{B}_{r}(\Omega)$ and $B\in\mathcal{B}_{r}(\Omega_2)$ to be two multi-$P$-operators with $d_M(B,A)=d$. Additionally, let's suppose $A\in\mathcal{B}_{r}(\Omega)$ to be atomless. Therefore, there exists a random variable $v\in L^{\infty}_{[-1,1]}(\Omega)$ such that its distribution is uniform on $[-1,1]$. Let's define $\alpha:=\mathcal{D}_A(v,\ldots,v)$.  As $d_H(\cS_1(A),\cS_1(B))\leq 2d$ we have that $\beta=\mathcal{D}_B(w,w^{(2)},\ldots,w^{(r-1)})\in \cS_1(B)$ with $d_{\mathcal{LP}}(\beta,\alpha)\leq 3d$ and thus $d_{\mathcal{LP}}(\alpha_1,\beta_1)\leq3d$, where $\alpha_1=\mathcal{L}(v)=\text{Unif}_{[-1,1]}$ and $\beta_1=\mathcal{L}(w)$ are the marginals of $\alpha$ and $\beta$ on the first coordinate. Thus, the distance $d_{\mathcal{LP}}$ between $\beta_1$  and the uniform distribution is at most $3d$. Therefore, the largest atom in $\beta_1$ is at most $10d$ as by the definition of Levy-Prokhorov distance
$$
\inf\{\delta:\ \beta_1(\{x_0\})\leq \alpha_1(B_{\delta}(x_0))+\delta\}\leq d_{\mathcal{LP}}(\alpha_1,\beta_1)\leq 3d
$$
and $\alpha_1(B_{\delta}(x_0))=2\delta$. Hence the largest atom in $\Omega_2$ has weight at most $10d=10d_M(B,A)$. For this reason, if $B$ is the limit of atomless operators, then $B$ is atomless. 
\end{proof}

Under uniform boundedness conditions, positivity and symmetry of multi-$P-$operators are preserved under action convergence.

\begin{lemma} \label{LemmaClosedSymmetric}Let $p\in[1,\infty]$ and $q\in (1,\infty)$. Let $\{A_i\in\mathcal{B}_{r}(\Omega_i)\}_{i=1}^\infty$ be a sequence of  multi-$P$-operators with a uniform bound on the  $(p,\ldots,p,q)$-norms converging to a multi-$P$-operator $A\in\mathcal{B}_{r}(\Omega)$. If $A_i$ is symmetric for every $i$, then $A$ is also symmetric.
\end{lemma}

\begin{proof} 

Let $\pi$ be a permutation of $[r].$ To show the statement let $v_1,\ldots, v_{r}\in L^\infty_{[-1,1]}(\Omega)$ and let $\mu:=\mathcal{D}_A(v_1,\ldots,v_r)$. By the definition of action convergence, it follows that for every $i\in\mathbb{N}$ there exist functions $v_{i,1},\ldots, v_{i,r}\in L_{[-1,1]}^\infty(\Omega_i)$ such that $\mu_i:=\mathcal{D}_{A_i}(v_{i,1},\ldots,v_{i,r},v_{i,\pi(1)},\ldots,v_{i,\pi(r)})$ weakly converges to $\mu$.  By Lemma \ref{closedlem2}, we have that $\mathbb{E}[v_{i,r}(A_i[v_{i,1},\ldots,v_{i,r-1}])]$ goes to $\mathbb{E}[v_r(A[v_1,\ldots,v_{r-1}])]$ and $\mathbb{E}[v_{i,\pi(r)}(A_i[v_{i,\pi(1)},\ldots,v_{i,\pi(r-1)}])]$ goes to $\mathbb{E}[v_{\pi(r)}(A[v_{\pi(1)},\ldots,$
$v_{\pi(r-1)}])]$ as $i$ goes to infinity. But additionally, we notice that 
$$\mathbb{E}[v_{i,r}(A_i[v_{i,1},\ldots,v_{i,r-1}])]=\mathbb{E}[v_{i,\pi(r)}(A_i[v_{i,\pi(1)},\ldots,v_{i,\pi(r-1)}])]$$
and therefore $$
\mathbb{E}[v_r(A[v_1,\ldots,v_{r-1}])]=\mathbb{E}[v_{\pi(r)}(A[v_{\pi(1)},\ldots,v_{\pi(r-1)}])]
$$
This concludes the proof.
\end{proof} 
\begin{remark}
The $s-$action of the adjacency tensor of a hypergraph is positive and symmetric. 
\end{remark}
Moreover, positivity-preserving and $c-$regular multi-$P-$operators are also closed under action convergence, under slightly different uniform boundedness conditions.

\begin{lemma}\label{LemmaClosedPosPres} Let $p\in [1,\infty),q\in [1,\infty],c\in\mathbb{R}$ and let $\{A_i\in\mathcal{B}_{r}(\Omega_i)\}_{i=1}^\infty$ be a sequence of  multi-$P$-operators with a uniform bound on the  $(p,\ldots,p,q)$-norms converging to a $P$-operator $A\in\mathcal{B}_{r}(\Omega)$. Then we have the following two statements.
\begin{enumerate}
\item If $A_i$ is positivity-preserving for every $i$, then $A$ is also positivity-preserving.
\item If $A_i$ is $c$-regular for every $i$, then $A$ is also $c$-regular.
\end{enumerate}
\end{lemma}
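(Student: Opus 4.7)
The plan is to follow the template of Proposition 3.4 in \cite{backhausz2018action} for the linear case: for each of the two properties, start from an arbitrary test input in $L^\infty(\Omega)$, pull back approximators $v_{i,j}$ on $\Omega_i$ via the remark after the definition of multi-action convergence, surgically modify these approximators so that they already satisfy the relevant property, and finally transport the smallness of the modification through $A_i$ via the uniform $(p,q)$-bound combined with a multilinear telescoping estimate. The bridge between the distributional information supplied by action convergence and the analytic operator bound is the fact that weak convergence of uniformly $L^\infty$-bounded laws upgrades to $L^p$-convergence whenever $p<\infty$.

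For (1), fix $v_1,\ldots,v_{r-1}\in L^\infty(\Omega)$ with $v_j\ge 0$ a.e.; by multilinear rescaling we may assume $\|v_j\|_\infty\le 1$. Produce $v_{i,j}\in L^\infty_{[-1,1]}(\Omega_i)$ such that $\mathcal{L}(v_{i,1},\ldots,v_{i,r-1},A_i[v_{i,1},\ldots,v_{i,r-1}])$ converges weakly to $\mathcal{L}(v_1,\ldots,v_{r-1},A[v_1,\ldots,v_{r-1}])$. Replace each $v_{i,j}$ by its positive part $v_{i,j}^+:=\max(v_{i,j},0)$. Since $v_j\ge 0$, the negative parts $v_{i,j}^-$ have laws converging weakly to $\delta_0$; as they lie in $[0,1]$, bounded convergence (equivalently Portmanteau applied to the bounded continuous test function $x\mapsto x^p$ on $[0,1]$) yields $\|v_{i,j}^-\|_p\to 0$. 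Using multilinearity and the identity $v_{i,j}^+-v_{i,j}=v_{i,j}^-$, telescope
$$A_i[v_{i,1}^+,\ldots,v_{i,r-1}^+]-A_i[v_{i,1},\ldots,v_{i,r-1}]=\sum_{k=1}^{r-1}A_i[v_{i,1}^+,\ldots,v_{i,k-1}^+,v_{i,k}^-,v_{i,k+1},\ldots,v_{i,r-1}].$$
Taking $L^q$-norm and applying the uniform $(p,q)$-bound together with $\|v_{i,j}^+\|_p,\|v_{i,j}\|_p\le 1$, the right-hand side is bounded by $C\sum_k\|v_{i,k}^-\|_p\to 0$. Hence the difference tends to $0$ in $L^q$, and in particular in probability. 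A Slutsky-type argument then shows that $\mathcal{L}(v_{i,1}^+,\ldots,v_{i,r-1}^+,A_i[v_{i,1}^+,\ldots,v_{i,r-1}^+])$ has the same weak limit as the original joint law. Since each $A_i$ is positivity-preserving and the $v_{i,j}^+$ are non-negative, the last marginal of this modified sequence is supported in $[0,\infty)$ for every $i$, and so is the limit $\mathcal{L}(A[v_1,\ldots,v_{r-1}])$; this forces $A[v_1,\ldots,v_{r-1}]\ge 0$ a.e.

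For (2), apply action convergence to $v_1=\cdots=v_{r-1}=1_\Omega$, obtaining $v_{i,j}\in L^\infty_{[-1,1]}(\Omega_i)$ whose marginal laws converge weakly to $\delta_1$. Bounded convergence gives $\|v_{i,j}-1_{\Omega_i}\|_p\to 0$, and the same telescoping estimate (with $v_{i,j}-1_{\Omega_i}$ in place of $v_{i,j}^-$) produces $\|A_i[v_{i,1},\ldots,v_{i,r-1}]-c\,1_{\Omega_i}\|_q\to 0$. Thus $A_i[v_{i,1},\ldots,v_{i,r-1}]\to c$ in probability, while action convergence identifies the weak limit of these laws with $\mathcal{L}(A[1_\Omega,\ldots,1_\Omega])$. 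This limiting law must therefore be $\delta_c$, equivalently $A[1_\Omega,\ldots,1_\Omega]=c\,1_\Omega$ almost everywhere. The main conceptual obstacle, and the reason the hypothesis $p\in[1,\infty)$ is indispensable, is exactly the upgrade from distributional closeness (all that action convergence delivers) to analytic $L^p$-closeness (what the operator bound can digest via the telescoping); as in the linear setting one expects both statements to fail for $p=\infty$.
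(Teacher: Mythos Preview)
Your proof is correct and follows essentially the same strategy as the paper: pull back approximators via action convergence, replace them by non-negative (resp.\ constant-$1$) surrogates, and argue that the replacement does not change the limiting profile measure. The only difference is cosmetic: the paper uses $|v_{i,j}|$ instead of $v_{i,j}^+$ and packages the ``replacement is harmless'' step into a single appeal to Lemma~\ref{applem2}, whereas you unroll that lemma by hand via the multilinear telescoping plus the uniform $(p,q)$-bound and a Slutsky argument.
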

\begin{proof}
To show the first statement, let $v_1,\ldots,v_{r-1}\in L^\infty_{[0,1]}(\Omega)$. By the definition of action convergence, there is a sequence $\{v_{i,1},\ldots,$\ $ v_{i,r-1}\in L_{[-1,1]}^\infty(\Omega_i)\}_{i=1}^\infty$  such that $\mathcal{D}_{A_i}(v_{i,1},\ldots,v_{i,r-1})$ weakly converges to $\mathcal{D}_A(v_1,\ldots,v_{r-1})$ as $i$ goes to infinity. As $\mathcal{L}(v_{i,1},\ldots,v_{i,r-1})$ weakly converges to the non-negative distribution $\mathcal{L}(v_1,\ldots,v_{r-1})$ it follows that $\mathcal{L}(v_{i,1}-|v_{i,{1}}|,\ldots,v_{i,r-1}-|v_{i,{r-1}}|)$ weakly converges to $\delta_0$. Thus, by Lemma \ref{applem2}, we have that 
$$d_{\mathcal{LP}}(\mathcal{D}_{A_i}(v_{i,1},\ldots,v_{i,r-1}),\mathcal{D}_{A_i}(|v_{i,1}|,\ldots,|v_{i,r-1}|))\rightarrow 0$$
for $i\rightarrow \infty$ and, for  this reason, $\mathcal{D}_{A_i}(|v_{i,1}|,\ldots, |v_{i,r-1}|)$ weakly converges to the probability measure $\mathcal{L}(v_1,\ldots,$
$v_{r-1},A[v_1,\ldots,v_{r-1}])$. The fact that $A_i[|v_{i,1}|,\ldots, |v_{i,r-1}|]$ is non-negative for every $i$  directly implies that $A[v_1,\ldots,v_{r-1}]$ is non-negative. 

To show the second statement, let $v_{i,1},\ldots, v_{i,r-1}\in L_{[-1,1]}^\infty(\Omega_i)$ be a sequence of functions such that $\mathcal{D}_{A_i}(v_{i,1},\ldots, v_{i,r-1})$ weakly converges to $\mathcal{D}_A(1_\Omega,\ldots, 1_\Omega)$. We notice that $\mathcal{D}(v_{i,1}-1_{\Omega_i},\ldots,v_{i,r-1}-1_{\Omega_i})$ weakly converges to $\delta_0$ and, for this reason, by Lemma \ref{applem2} we have that 
$$d_{\mathcal{LP}}(\mathcal{D}_{A_i}(1_{\Omega_i},\ldots,1_{\Omega_i}),\mathcal{D}_{A_i}(v_{i,1},\ldots,v_{i,r-1}))\rightarrow 0$$ as $i\rightarrow \infty$. Hence, it follows that $\mathcal{D}_A(1_\Omega,\ldots,1_\Omega)$ is the weak limit of $\mathcal{D}_{A_i}(1_{\Omega_i},\ldots,1_{\Omega_i})$. The result directly follows now by the fact that $ A_i[1_{\Omega_i},\ldots,1_{\Omega_i}]=c1_{\Omega_i}$.
\end{proof}

\begin{remark}
The $s-$action of the adjacency tensor of a hypergraph is positivity-preserving. 
\end{remark}

\section{Norms and metrics comparison}\label{SecNormsMulti}

In this section, we compare different norms and metrics for multi-$P-$operators.\newline

The following two lemmas are generalizations of Lemmas 2.12 and 2.13 in \cite{backhausz2018action}.

\begin{lemma}\label{limlem3} Let $r$ and $k$ be positive integers and let $A,B$ be $r-$th order multi-$P$-operators both in $\mathcal{B}_r(\Omega)$ for some probability space $(\Omega,\mathcal{A},\mu).$ Then $$d_H(\cS_k(A),\cS_k(B))\leq \|A-B\|_{\infty\to 1}^{1/2}(2k)^{3/4}.$$
\end{lemma}
\begin{proof} Let $\mu\in \cS_k(A)$ be arbitrary. We have that there exist functions $v_1,v_2,\dots,v_k\in L^\infty_{[-1,1]}(\Omega)$  such that $\mu$ is equal to the probability measure $\mathcal{D}_A(\{v^{(j)}_i\}_{j\in [r],i\in [k]})$. Let $\nu=\mathcal{D}_B(\{v^{(j)}_i\}_{j\in [r],i\in [k]})\in \cS_k(B)$. Since  $$\|A[v^{(1)}_i,\ldots, v^{(r-1)}_i]-B[v^{(1)}_i,\ldots, v^{(r-1)}_i]\|_1\leq \|A-B\|_{\infty\to 1}\prod_{j\in [r-1]}\|v^{(j)}_i\|_\infty\leq\|A-B\|_{\infty\to 1}$$
holds for every $i\in [k]$, we have by Lemma \ref{coupdist2} that $d_{\mathcal{LP}}(\mu,\nu)\leq \|A-B\|_{\infty\to 1}^{1/2}(2k)^{3/4}$. We obtained that $$\sup_{\mu\in \cS_k(A)}\inf_{\nu\in \cS_k(B)}d_{\mathcal{LP}}(\mu,\nu)\leq \|A-B\|_{\infty\to 1}^{1/2}(2k)^{3/4}.$$
By switching the roles of $A$ and $B$ and repeating the same argument we get the above inequality with $A$ and $B$ switched. This implies the statement of the lemma.
\end{proof}
The following lemma is a direct consequence of Lemma \ref{limlem3}.

\begin{lemma}\label{limspdm}  
Assume that $A,B$ are $r-$th order multi-$P$-operators acting on the same space $L^\infty(\Omega)$. We have  $d_M(A,B)\leq 3\|A-B\|_{\infty\to 1}^{1/2}$.
\end{lemma}
\begin{proof} Using Lemma \ref{limlem3} we obtain that 
\begin{equation*}d_M(A,B)\leq \|A-B\|_{\infty\to 1}^{1/2}\sum_{k=1}^\infty 2^{-k}(2k)^{3/4}\leq 3\|A-B\|_{\infty\to 1}^{1/2}.\end{equation*}
\end{proof}
For a multi-$P-$operator $A\in \cB(\Omega)$ we define the \emph{multi cut norm} as
$$\begin{aligned}
 \|A\|_{\square,\text{multi}}=&\sup _{f^{(1)},\ldots,f^{(r)}\in L_{[0,1]}^{\infty}(\Omega)}\left|\left\langle f^{(r)}, A[f^{(1)},\ldots f^{(r-1)}]\right\rangle\right|.
\end{aligned}
$$

We obtain that for an $r-$th order multi-$P-$operator the $\infty \rightarrow 1$ norm is equivalent to the multi cut norm. This is a generalization of Lemma 8.11 in \cite{LovaszGraphLimits} for graphons.

\begin{lemma}\label{LemmCutInf1} Let $A$ be a multi-$P-$operator $A\in \cB(\Omega).$ The following inequality holds:

$$
\|A\|_{\square,\text{multi}}\leq \|A\|_{\infty\rightarrow 1}\leq 2^r\|A\|_{\square,\text{multi}}.
$$

\end{lemma}
\proof
We get the first inequality by definition:
$$\begin{aligned}
\left\|A\right\|_{\infty \rightarrow 1}&=\sup _{f^{(1)},\ldots,f^{(r-1)}\in L_{[-1,1]}^{\infty}(\Omega)}\left\|A[f^{(1)},\ldots f^{(r-1)}]\right\|_1\\
&=\sup _{f^{(1)},\ldots,f^{(r)}\in L_{[-1,1]}^{\infty}(\Omega)}\left\langle f^{(r)}, A[f^{(1)},\ldots f^{(r-1)}]\right\rangle\\
&=\sup _{f^{(1)},\ldots,f^{(r)}\in L_{[-1,1]}^{\infty}(\Omega)}\left|\left\langle f^{(r)}, A[f^{(1)},\ldots f^{(r-1)}]\right\rangle\right| \\
& \geq\|A\|_{\square,\text{multi}}.
\end{aligned}
$$

We now show the second inequality. We observe the following equality:

$$
\left\|A\right\|_{\infty \rightarrow 1}=\sup _{f^{(1)},\ldots,f^{(r)},g^{(1)},\ldots,g^{(r)}\in L_{[0,1]}^{\infty}(\Omega)}\left\langle f^{(r)}-g^{(r)}, A[f^{(1)}-g^{(1)},\ldots,f^{(r-1)}-g^{(r-1)} ]\right\rangle.
$$

Moreover, for any $f^{(1)},\ldots,f^{(r)},g^{(1)},\ldots,g^{(r)}\in L_{[0,1]}^{\infty}(\Omega)$  we have the following inequality
$$
\begin{aligned}
&\left\langle f^{(r)}-g^{(r)}, A[f^{(1)}-g^{(1)},\ldots,f^{(r-1)}-g^{(r-1)} ]\right\rangle
\\ 
&=\left\langle f^{(r)}, A [f^{(1)}-g^{(1)},\ldots,f^{(r-1)}-g^{(r-1)} ]\right\rangle-\left\langle g^{(r)}, A [f^{(1)}-g^{(1)},\ldots,f^{(r-1)}-g^{(r-1)} ]\right\rangle  \\
&=\left\langle f^{(r)}, A [f^{(1)},f^{(2)}-g^{(2)},\ldots,f^{(r-1)}-g^{(r-1)} ]\right\rangle \\
&\hspace{0.5 cm}-\left\langle f^{(r)}, A [g^{(1)},f^{(2)}-g^{(2)},\ldots,f^{(r-1)}-g^{(r-1)} ]\right\rangle\\
&\hspace{0.5 cm}-\left\langle g^{(r)}, A [f^{(1)},f^{(2)}-g^{(2)},\ldots,f^{(r-1)}-g^{(r-1)} ]\right\rangle \\
&\hspace{0.5 cm}+\left\langle g^{(r)}, A [g^{(1)},f^{(2)}-g^{(2)},\ldots,f^{(r-1)}-g^{(r-1)} ]\right\rangle 
 \\
& \leq 2^r\|A\|_ {\square,\text{multi}}.
\end{aligned}
$$
Therefore, we obtain $\left\|A\right\|_{\infty \rightarrow 1}\leq 2^r\|A\|_ {\square,\text{multi}}. $
\endproof

Let $\varphi:\Omega\to\Omega$ be a bijective measure-preserving transformation. We denote with $\varphi^{-1}$ its measure-preserving inverse. The transformation $\varphi$ induces a natural, linear action on $L^1(\Omega)$, which we also indicate by $\varphi$, defined by $(f)^\varphi(x)=f(\varphi(x))$. Furthermore, for $A\in\mathcal{B}(\Omega)$ let $A^\varphi$ defined as 

$$A^{\varphi}[f^{(1)},\ldots,f^{(r-1)}]=(A[(f^{(1)})^{\varphi},\ldots,(f^{(r-1)})^{\varphi}])^{\varphi^{-1}}.$$

We observe that if $A\in\mathcal{B}(\Omega)$, then $A^\varphi\in\mathcal{B}(\Omega)$ and $d_M(A,A^\varphi)=0$. Let $A$ and $B$ be two $r-$th order multi-$P-$operators such that $A,B\in \cB(\Omega).$ The \emph{multi cut distance} between $A$ and $B$ is defined as
$$\delta_{\square, \text{multi}}(A,B)=\inf_{\varphi,\psi}~\|A^\varphi-B^\psi\|_{\square},$$
where the infimum is taken over all $\varphi,\psi$ invertible measure-preserving transformations from $\Omega$ to $\Omega.$ 

\begin{lemma}\label{limspdm2} Assume that $A,B$ are $r-$th order multi-$P$-operators acting on the same space $\Omega$. Then $d_M(A,B)\leq {\color{black}3\cdot 2^r}\delta_{\square,\text{multi}}(A,B)^{1/2}$.
\end{lemma}

\proof 
The proof follows directly from Lemma \ref{limspdm} and observing that $d_M(A,B)=d_M(A^{\phi},B^{\psi})$ for any bijective and measure preserving transformations $\phi,\psi$ from $\Omega$ to itself.
\endproof

\section{Multi-action convergence of hypergraphs and tensors}\label{SecMultActTensHyp}

We have seen in the previous sections that hypergraphs can be naturally associated with symmetric tensors through, for example, the adjacency tensor. We can therefore study the convergence of sequences of tensors and see the convergence of hypergraphs as a particular case. Moreover, in the previous sections, we noticed that tensors can be associated with multi-linear operators in many different ways. We will compare the notions of convergence induced by the different operators associated to the same tensor. We mainly focus on symmetric tensors as we are originally motivated by undirected hypergraphs. We notice that the obtained notions of convergence are not equivalent. For simplicity, we will mainly present the convergence in the case of $3-$rd order symmetric tensors and therefore hypergraphs with maximal edge cardinality 3. However, the notions of convergence are general and cover tensors of any finite order and hypergraphs with any finite maximal edge cardinality. These convergence notions particularly make sense for uniform hypergraphs. However, we will explain, in Section \ref{NonUnifHypSect}, how one can extend these notions to not lose information regarding the non-maximal cardinality edges in non-uniform hypergraphs.

We recall the notion of $s-$action of a tensor from Section \ref{SecTensHyp}. For a $3$rd-order tensor $T=(T_{i,j,k})_{i,j,k\in [n]}$ the 1-action and the $2-$action of the tensor are respectively 

\begin{equation*}
(T_1[f,g])_{i}=\sum^n_{j,k=1}T_{j,i,k}f_{i}g_{k} 
\end{equation*}and
$$
\begin{aligned}
& (T_2[f,g])_{i,k}=\frac{1}{2}(\sum^n_{j=1}T_{j,i,k}f_{j,i}g_{j,k} +\sum^n_{j=1}T_{j,i,k}f_{j,k}g_{j,i})
\end{aligned}
$$
Therefore, we can interpret the $1-$action as an operator 
$$
\begin{aligned}
T_1:(L^{\infty}([n]))^2\longrightarrow L^{1}([n])
\end{aligned}
$$
and the $2-$action as
$$T_2:(L^{\infty}([n]\times [n],Sym))^2\longrightarrow L^{1}([n]\times [n],Sym)$$
where $Sym$ is the symmetric $\sigma-$algebra on $[n]\times [n]$.

\begin{remark}
More generally the $s-$action of an $r-$th order symmetric tensor $T$ is acting on $r-1$ symmetric $s-$th order tensors and gives as an output another symmetric $s-$th order tensor. For this reason, this $s-$action can be interpreted as an operator
$$
T_s:(L^{\infty}([n]^s,Sym))^{r-1}\longrightarrow L^{1}([n]^s,Sym)
$$
where  $Sym$ is the symmetric $\sigma-$algebra on $[n]^s$.
\end{remark}

In such a way, we obtain two notions of convergence for sequences of $3$rd-order tensors $T_n=(T_{i,j,k})_{i,j,k\in [n]}$, the one obtained by the action convergence of the sequence of multi-linear operators $(T_1)_n$ and the one obtained by the action convergence of the sequence of multi-linear operators $(T_2)_n$.

\begin{remark}
As already pointed out we can associate to an $r-$th order symmetric tensor its $s-$action for $s\in[r-1]$. These different actions can be interpreted as $r-1$ different multi-$P-$operators. Therefore, for a sequence of $r-$th order symmetric tensors, we obtain $r-1$ different notions of convergence. 
\end{remark}

{\color{black}
We will use the results in this section later in this work to connect the metric $d_M$ with other norms and metrics for hypergraph limits.

}

\subsection{Uniform bounds on sequences of $s-$actions}
We recall that in the case of graphs, we typically consider the $1-$action of normalized adjacency matrices. In particular, for dense graphs, we consider

$$
\widetilde{A}(G):L^{\infty}([n])\longrightarrow L^{1}([n])
$$
$$
(\widetilde{A}(G)[f])_i=\sum_{j}\frac{A(G)_{i,j}}{n}f_j.
$$
for a graph $G$ on the vertex set $[n].$ 
These linear bounded operators can be easily extended to linear bounded operators 
$$
\widetilde{A}(G):L^1{([n])}\longrightarrow L^{\infty}([n])
$$
and we have that these operators are uniformly bounded (independently by the cardinality of the vertex set $n$) as 
$$
\begin{aligned}
&\|\widetilde{A}(G)[f]\|_{\infty}=\max_i|\sum_{j}\frac{A(G)_{i,j}}{n}f_j|\\&
\leq \max_i\sum_{j}\frac{A(G)_{i,j}}{n}|f_j|\leq\sum_{j}\frac{|f_j|}{n}= \|f\|_1 .\\ &
\end{aligned}
$$
We can observe from the following example that for hypergraphs with maximal edge cardinality $r>2$ this is not true. 

\begin{example}
For example, consider a (dense) hypergraph $H$, its adjacency tensor $(A_{i,j,k})_{i,j,k\in [n]}$ and the associated multi-$P-$operator 
$$(\widetilde{A}(H)[f,g])_{i,k}=\widetilde{A}[f,g]_{i,k}=\frac{1}{2}(\sum^n_{j=1}\frac{A_{j,i,k}}{n}f_{j,i},g_{j,k} +\sum^n_{j=1}\frac{A_{j,i,k}}{n}f_{j,k},g_{j,i})
$$
 and consider the matrices $f,g$ such $$f_{i,j}=g_{i,j}=\begin{cases}
f_{i,j}=0 & \text{ if } i,j\neq 1 \\
f_{i,j}=1 & \text{ if } i=1 \text{ or }j=1
\end{cases}.
$$
\end{example}

However, we can consider a smaller extension.

\begin{lemma}\label{LemmaUnifBoundNormHyp}
    The sequence of operators $$
\widetilde{A}(G_n):L^2{([n]\times [n],Sym)}\times L^2{([n]\times [n],Sym)}\longrightarrow L^{2}([n]\times [n], Sym)
$$
$$(\widetilde{A}(G_n)[f,g])_{i,k}=\widetilde{A}[f,g]_{i,k}=\frac{1}{2}(\sum^n_{j=1}\frac{A_{j,i,k}}{n}f_{j,i},g_{j,k} +\sum^n_{j=1}\frac{A_{j,i,k}}{n}f_{j,k},g_{j,i})
$$
is uniformly bounded in $L^2-$operator norm.
\end{lemma}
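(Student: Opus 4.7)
The plan is to reduce the bilinear operator bound to a pointwise Cauchy--Schwarz estimate on each of the two symmetrised summands, carefully tracking the $L^2$ normalisations with respect to the uniform probability measure on $[n]\times[n]$. Since $A_{j,i,k}\in\{0,1\}$ in particular $A_{j,i,k}\le 1$ for every $j,i,k$, and by the triangle inequality it suffices to bound the single term
$$
F_{i,k}:=\sum_{j=1}^n\frac{A_{j,i,k}}{n}f_{j,i}g_{j,k}
$$
in the $L^2$ norm, since the other summand in $\widetilde{A}(G_n)[f,g]$ is of the same shape (indices $i,k$ swapped in $f,g$).

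First I would apply Cauchy--Schwarz pointwise in the index $j$, dominating $A_{j,i,k}/n$ by $1/n$:
\begin{equation*}
F_{i,k}^{2}\ \le\ \Bigl(\sum_{j=1}^n\tfrac{1}{n}f_{j,i}^{2}\Bigr)\Bigl(\sum_{j=1}^n\tfrac{1}{n}g_{j,k}^{2}\Bigr).
\end{equation*}
Next I would integrate against the uniform measure on $[n]\times[n]$. Recalling that for any matrix $h=(h_{a,b})$ one has $\|h\|_{2}^{2}=\frac{1}{n^{2}}\sum_{a,b}h_{a,b}^{2}$, the variables separate and I obtain
\begin{equation*}
\|F\|_{2}^{2}\ =\ \frac{1}{n^{2}}\sum_{i,k}F_{i,k}^{2}\ \le\ \frac{1}{n^{2}}\Bigl(\sum_{i,j}\tfrac{1}{n}f_{j,i}^{2}\Bigr)\Bigl(\sum_{j,k}\tfrac{1}{n}g_{j,k}^{2}\Bigr)\ =\ \frac{1}{n^{2}}\bigl(n\|f\|_{2}^{2}\bigr)\bigl(n\|g\|_{2}^{2}\bigr)\ =\ \|f\|_{2}^{2}\|g\|_{2}^{2}.
\end{equation*}

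Finally, applying the triangle inequality to the two symmetrised pieces gives $\|\widetilde{A}(G_n)[f,g]\|_{2}\le \|f\|_{2}\|g\|_{2}$, a bound which is independent of $n$, so the $L^{2}$-operator norm of $\widetilde{A}(G_n)$ is uniformly bounded (by $1$) over the whole sequence. There is no real obstacle here: the only point requiring care is the accounting of the $1/n$ normalisation against the uniform measures on $[n]$ (hidden inside $A_{j,i,k}/n$) and on $[n]\times[n]$ (defining $\|\cdot\|_{2}$), which is precisely what makes the bound come out independent of $n$ and thus gives a genuine $L^{2}\times L^{2}\to L^{2}$ multi-$P$-operator in the limit.
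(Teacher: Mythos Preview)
Your proof is correct and follows essentially the same route as the paper's: bound $A_{j,i,k}\le 1$, apply Cauchy--Schwarz in the summed index $j$, and then separate the $i$ and $k$ variables to recover $\|f\|_2\|g\|_2$. The only cosmetic difference is that you split off one symmetrised summand via the triangle inequality at the outset, whereas the paper keeps both pieces together and invokes Minkowski at the end; the arithmetic and the resulting bound are identical.
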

\proof
In these spaces, we have a uniform bound, in fact
$$
\begin{aligned}
&
|\widetilde{A}[f,g]_{i,k}|\leq \frac{1}{2}(\sum^n_{j=1}\frac{A_{j,i,k}}{n}|f_{j,i}g_{j,k}| +\sum^n_{j=1}\frac{A_{j,i,k}}{n}|f_{j,k}g_{j,i}|)\\&
\leq \frac{1}{2}(\sum^n_{j=1}\frac{1}{n}|f_{j,i}g_{j,k}| +\sum^n_{j=1}\frac{1}{n}|f_{j,k}g_{j,i}|) 
\\ & \leq
\frac{1}{2}((\sum^n_{j=1}\frac{1}{n}|f_{j,i}|^2)^{\frac{1}{2}}(\sum^n_{j=1}\frac{1}{n}|g_{j,k}|^2 )^{\frac{1}{2}}+(\sum^n_{j=1}\frac{1}{n}|f_{j,k}|^2)^{\frac{1}{2}}(\sum^n_{j=1}\frac{1}{n}|g_{j,i}|^2)^{\frac{1}{2}})
\end{aligned}
$$

where the last inequality follows by Cauchy-Schwartz inequality. Therefore, we obtain

{\color{black}

$$
\begin{aligned}
&
\|\widetilde{A}[f,g]\|_2\leq
(\frac{1}{n^2}\sum_{i,k=1}
|\widetilde{A}[f,g]_{i,k}|^2)^{\frac{1}{2}}
\\&\leq
\frac{1}{2}\left(\frac{1}{n^2}\sum_{i,k=1}\left((\sum^n_{j=1}\frac{1}{n}|f_{j,i}|^2)^{\frac{1}{2}}(\sum^n_{j=1}\frac{1}{n}|g_{j,k}|^2 )^{\frac{1}{2}}+(\sum^n_{j=1}\frac{1}{n}|f_{j,k}|^2)^{\frac{1}{2}}(\sum^n_{j=1}\frac{1}{n}|g_{j,i}|^2)^{\frac{1}{2}}\right)^2\right)^{\frac{1}{2}}
\\&\leq
\frac{1}{2}\left(\frac{1}{n^2}\sum_{i,k=1}(\sum^n_{j=1}\frac{1}{n}|f_{j,i}|^2)(\frac{1}{n}\sum^n_{j=1}|g_{j,k}|^2 )\right)^{\frac{1}{2}}+\left(\frac{1}{n^2}\sum_{i,k=1}(\sum^n_{j=1}\frac{1}{n}|f_{j,k}|^2)(\sum^n_{j=1}\frac{1}{n}|g_{j,i}|^2)\right)^{\frac{1}{2}}
\\&\leq
\frac{1}{2}\left((\frac{1}{n^2}\sum^n_{j,i=1}|f_{j,i}|^2)^{\frac{1}{2}}(\frac{1}{n^2}\sum^n_{j,k=1}|g_{j,k}|^2 )^{\frac{1}{2}}+(\frac{1}{n^2}\sum^n_{j,k=1}|f_{j,k}|^2)^{\frac{1}{2}}(\frac{1}{n^2}\sum^n_{j,i=1}|g_{j,i}|^2)^{\frac{1}{2}}\right)\\
&  =\|f\|_{2}\|g\|_{2}
\end{aligned}
$$
where in the third inequality we used Minkowski inequality.

\endproof
\begin{remark}
More in general, for $r>2$, for a sequence of dense hypergraphs the sequence of $(r-1)-$actions of the relative normalized adjacency tensors cannot be extended/interpreted as a uniformly bounded sequence of linear operators from $L^1\times \ldots \times L^1$ to $L^{\infty}$. Therefore, one has to consider them as operators from $L^{p_1}\times \ldots \times L^{p_{r-1}}$ to $L^q$ with $p_1,\ldots,p_{r-1}\neq 1$ and $q\neq \infty$. This happens already in the case of graph limits for sparse graph sequences, and we know that this translates in larger classes of measures admitting also more irregular measures, possibly not absolutely continuous with respect to the Lebesgue measure on the interval $[0,1]$. 
Instead, for every $r$ the sequence of $1-$actions of the normalized adjacency matrices of dense graphs is a uniformly bounded sequence of linear operators from $L^1\times \ldots \times L^1$ to $L^{\infty}$.
\end{remark}

\begin{remark}
The same estimates hold for the non-symmetrized $s-$action.
\end{remark}

\subsection{Properties of $s-$actions as $P-$operators}

We underline here a few more properties of the action of (normalized) adjacency matrices of hypergraphs and, therefore, also of their limits by Lemma \ref{LemmaClosedSymmetric} and Lemma \ref{LemmaClosedPosPres}. 

First of all, we notice that the actions of (normalized) adjacency tensors are obviously positivity-preserving multi-$P-$operators and, therefore, their action convergence limits are too.
The following lemma and remark state that the action of a symmetric tensor is a symmetric multi-$P-$operator. 
\begin{lemma}
For a $3-$rd order symmetric tensor $T=((T)_{i,j,k})_{i,j,k\in [n]}$ the multi-$P-$operator $(T)_2$ is symmetric. 
\end{lemma}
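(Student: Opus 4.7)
The plan is to verify the symmetry condition from Definition \ref{defprops} directly by writing $\mathbb{E}[(T)_2[v_1,v_2]v_3]$ as an explicit trilinear form in the entries of $v_1,v_2,v_3$ and then showing this form is invariant under every permutation of $(v_1,v_2,v_3)$ via relabeling of summation indices combined with the symmetries of $T$ and of the symmetric test matrices. Since the probability space is $([n]\times[n],\mathrm{Sym})$ with the natural uniform measure, for any three symmetric matrices $f,g,h\in L^{\infty}([n]\times[n],\mathrm{Sym})$ the expectation expands as
$$\mathbb{E}[(T)_2[f,g]\,h]=\frac{1}{2n^2}\sum_{i,j,k=1}^{n}T_{j,i,k}\bigl(f_{j,i}g_{j,k}+f_{j,k}g_{j,i}\bigr)h_{i,k}.$$

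The first step is to collapse the symmetrizer that appears in the definition of $(T)_2$. In the second summand, I would relabel $i\leftrightarrow k$, use $T_{j,k,i}=T_{j,i,k}$ (symmetry of $T$) and $h_{k,i}=h_{i,k}$ (symmetry of $h$); the summand then becomes identical to the first one. This yields the clean expression
$$\mathbb{E}[(T)_2[f,g]\,h]=\frac{1}{n^2}\sum_{i,j,k=1}^{n}T_{j,i,k}\,f_{j,i}\,g_{j,k}\,h_{i,k}. \qquad(\ast)$$

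The second step is to check invariance of $(\ast)$ under the two transpositions that generate $S_3$. The transposition swapping $v_1$ and $v_2$ is already visible from the original symmetrized form of $(T)_2$, so I would only need to verify the transposition swapping $v_1$ and $v_3$: starting from $\sum T_{j,i,k}h_{j,i}g_{j,k}f_{i,k}$, I would relabel $j\leftrightarrow k$, then apply $T_{k,i,j}=T_{j,i,k}$ together with the symmetries $h_{k,i}=h_{i,k}$, $g_{k,j}=g_{j,k}$, and $f_{i,j}=f_{j,i}$ of the test matrices. All three factors line up exactly with $(\ast)$, so the expression is invariant. Since transpositions $(12)$ and $(13)$ generate $S_3$, $(\ast)$ is invariant under arbitrary permutations of $(f,g,h)$, which is precisely symmetry of $(T)_2$ in the sense of Definition \ref{defprops}.

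There is no real obstacle here beyond careful index bookkeeping; the result is essentially algebraic, and the only subtlety is making sure the symmetrization in the definition of $(T)_2$ is consistent with the symmetry of the ambient $\sigma$-algebra $\mathrm{Sym}$ when passing factors under the expectation. The same strategy generalizes in a routine way to show that $(T)_s$ is symmetric for every symmetric $r$-th order tensor $T$ and every $s\in[r-1]$, which together with Lemma \ref{LemmaClosedSymmetric} and Lemma \ref{LemmaClosedPosPres} justifies that action-convergent limits of adjacency tensors of hypergraphs are indeed hypergraphops.
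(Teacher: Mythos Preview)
Your proposal is correct and follows essentially the same approach as the paper: both reduce $\mathbb{E}[(T)_2[f,g]h]$ to the single trilinear sum $\frac{1}{n^2}\sum_{i,j,k}T_{i,j,k}f_{i,j}g_{j,k}h_{i,k}$ and then conclude invariance under permutations of $(f,g,h)$. Your version simply spells out the index relabelings (the $i\leftrightarrow k$ swap to collapse the symmetrizer, and the $j\leftrightarrow k$ swap for the $(13)$ transposition) that the paper leaves implicit in its chain of equalities.
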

\proof
The result follows from the following equality
\begin{equation}
\begin{aligned}
\mathbb{E}[(T)_2[f,g]h]&=\frac{1}{n^2}\sum^n_{i,k=1}\frac{1}{2}(\sum^n_{j=1}T_{i,j,k}f_{i,j}g_{j,k}+\sum^n_{j=1}T_{i,j,k}g_{i,j}f_{j,k})h_{i,k}\\
&=\frac{1}{n^2}\sum^n_{i,j,k=1}T_{i,j,k}f_{i,j}g_{j,k}h_{i,k}\\
&=\mathbb{E}[(T)_2[h,g]f]\\
&=\mathbb{E}[(T)_2[f,h]g].
\end{aligned}
\end{equation}
\endproof
\begin{remark}
Similarly, the $s-$action of a symmetric $r-$th order $n-$dimensional symmetric tensor $T$ is symmetric for every $s\in [r-1]$ by a similar computation.
\end{remark}

Therefore, the limit of the sequence of symmetric tensors will also be symmetric for Lemma \ref{LemmaClosedSymmetric}.

\subsection{Generalization of $s-$actions}\label{SecSactioGen}

Recall the $1-$actions introduced in \ref{DefAction} for tensors. In this section, we generalize the notion of $1-$action to $r-$kernels (see below) and study its properties. We will also present a natural generalization of $s-$action, for $s\in [r],$ to $r-$kernels.

Let $\Omega$ be a probability space. We call a measurable function 
$$
W:\Omega^{r}\rightarrow \R
$$
such that $\|W\|_1<\infty$ an \emph{$r-$kernel}. 

We will say that an $r-$kernel $W$ is an \emph{$r-$graphon} if $W$ takes values in $[0,1].$

\begin{remark}
    This is a trivial generalization of real-valued graphons \cite{LovaszGraphLimits}. In particular, for $r=2$ we have that the $r-$graphons are the real-valued graphons.
\end{remark}

\begin{remark}
An $r-$th order $n-$dimensional tensor is an $r-$kernel where $\Omega=[n],$ endowed with the uniform measure. One can also naturally represent a tensor with a $r-$kernel that is a step-function (as a trivial generalization of the step-representation of a graph (or matrix) for real-valued graphons).
\end{remark}

We can identify an $r-$kernel $W$ with its $1-$action, the $r-$th order multi-$P-$operator $$
(W)_1:L^{\infty}(\Omega)^{r-1}\rightarrow L^{1}(\Omega)
$$
defined as $$
(W)_1[f^{(1)},\ldots f^{(r-1)}]=\int_{\Omega^{r-1}}W(x_1,\ldots,x_k)f^{(1)}(x_1)\ldots f^{(r-1)}(x_{r-1}) \mathrm{d}x_1\ldots \mathrm{d}x_{r-1}.
$$

For a $k-$kernel $W$ we can define the $1-$cut norm as
$$
\begin{aligned}
    \|W\|_{\square_1}&=\sup_{ f^{(1)},\ldots,f^{(r)}:\Omega\rightarrow[0,1]}\left|\int_{\Omega^r}W(x_1,\ldots,x_r)f^{(1)}(x_1)\ldots f^{(r)}(x_r) \mathrm{d}x_1\ldots \mathrm{d}x_r\right|\\
    &=\sup _{f^{(1)},\ldots,f^{(r)}\in L_{[0,1]}^{\infty}(\Omega)}\left\langle f^{(r)}, {\color{black}(W)_1}[f^{(1)},\ldots f^{(r-1)}]\right\rangle= \|(W)_1\|_{\square,\text{multi}}
    \end{aligned}$$

Compare also \cite{HypergraphonsZhao}.

From Lemma \ref{LemmCutInf1} we directly obtain that for an $r-$kernel $W$ the $\infty \rightarrow 1$ norm of the associated multi-$P-$operator $(W)_1$ is equivalent to the $1-$cut norm. 
\begin{equation}
    \|W\|_{\square_1}\leq \|(W)_1\|_{\infty\rightarrow 1}\leq 2^r\|W\|_{\square_1}.
\end{equation}
This is a generalization of Lemma 8.11 in \cite{LovaszGraphLimits} for graphons.

For a bijective measure-preserving transformation $\varphi:\Omega\rightarrow \Omega$ and an $r-$kernel $W,$ we denote with $W^{\varphi}$ the $r-$kernel defined for every $x_1,\ldots,x_r\in \Omega$ as
$$
W^{\varphi}(x_1,\ldots,x_r)=W(\varphi(x_1),\ldots,\varphi(x_r)).
$$
We observe that $(W)_1^{\varphi}=(W^{\varphi})_1.$
Moreover, for two $r-$kernels $W$ and $U$ on the same probability space $\Omega$, we define the $1-$cut metric
\begin{equation}
    \begin{aligned}
        \delta_{\square_1}(U,W)&=\inf_{\varphi,\psi}\|W^{\varphi}-U^{\psi}\|_{\square_{1}}\\
        & =\inf_{\varphi,\psi}\|(W)_1^{\varphi}-(U)_1^{\psi}\|_{\square,\text{multi}} \\
        & =\delta_{\square, \text{multi}}((U)_1,(W)_1).
    \end{aligned}
\end{equation}

Therefore, from Lemma \ref{limspdm2}, we obtain 
\begin{equation}
    d_M((W)_1,(U)_1)\leq {\color{black} 3 \cdot 2^r}\delta_{\square_1}(W,U)^{1/2}.
\end{equation}
This implies that convergence in the $1-$cut metric (or $1-$cut norm) of a sequence of $r-$kernels implies multi-linear action convergence of the sequence of the $1-$actions associated with the $r-$kernels. \newline

\begin{remark}
Similarly, we can consider the $s-$action of a symmetric $r-$kernel $W$ as the straightforward generalization of the $s-$action of a symmetric tensor to $r-$kernels. For brevity, we write down explicitly only the $2-$action for a symmetric $3-$kernel $W$ that is the multi-$P-$operator
$$
(W)_2:L^{\infty}(\Omega)^{2}\rightarrow L^{1}(\Omega)
$$

$$
(W)_2[f,g]=\frac{1}{2}\left(\int_{\Omega^2}W(x,y,z)f(x,y)g(y,z)\mathrm{d}y+\int_{\Omega^2}W(x,y,z)f(z,y)g(y,x)\mathrm{d}y\right).
$$
Let's now consider the (too) strong $2-$cut norm 
$$\begin{aligned}\|W\|_{\square_{2,\text{TS}}}&=\sup _{\substack{f, g, h:\left[0,1]^2 \rightarrow[0,1]\right. \\  \text { symmetric }}}\left|\int_{[0,1]^3} W(x, y, z) f(x, y) g(x, z) h(y, z) \mathrm{d} x\mathrm{d} y \mathrm{d} z\right|\\
&=\|(W)_2\|_{\square,\text{multi}}.\end{aligned}$$

Therefore, we can use the reasoning used for $(W)_1,$ substituting $(W)_1$ with $(W)_2$ and $\|W\|_{\square_{1}}$ with $\|W\|_{\square_{2,\text{TS}}},$ to obtain 
\begin{equation}
    d_M((W)_2,(U)_2)\leq {\color{black} 3 \cdot 2^r}\delta_{\square_{2,TS}}(W,U)^{1/2}={\color{black} 3 \cdot 2^r}(\inf_{\varphi,\psi}\|W^{\varphi}-U^{\psi}\|_{\square_{2,\text{TS}}})^{1/2}.
\end{equation}
The (too) strong $2-$cut norm has been studied in the context of hypergraph limits before. The interested reader can find more information about it in Section 3 of \cite{HypergraphonsZhao}. There it is also explained that many interesting hypergraph sequences do not admit a convergent subsequence in this norm.

\end{remark}

{\color{black}
Therefore, from the results in this section, we directly get examples of convergent hypergraph limits in $d_M,$ see the next section or Section 3 in \cite{HypergraphonsZhao}.
}
\subsection{Examples of hypergraph sequences and action convergence}

The emergence of multiple operators and therefore of different notions of convergence of symmetric tensors is related to the emergence of different levels of quasi-randomness for sequences of hypergraphs \cite{RandomneLimitTow,QuasirandHyp1,QuasirandHyp2}.

We illustrate here this relationship with some examples.

\begin{example}\label{ExamCompERHyp}
    Let's consider the $3-$uniform Erdős–Rényi hypergraph $G(n,\frac{1}{8},3)$ from Example \ref{ERRandomHypergraph} and the $3-$uniform hypergraph $T(n,\frac{1}{2})$, i.e. the $3-$uniform hypergraph with the triangles of an Erdős–Rényi graph $G(n,\frac{1}{2},2)$ from Example \ref{TriangRandHyp} as edges. We now consider the sequence $(A_n)_{n\in \mathbb{N}}$ of the normalized adjacency tensors associated with (a realization of) $G(n,\frac{1}{8},3)$, i.e.\  $A_n=\sfrac{A(G(n,\frac{1}{8},3))}{n}$, and the sequence $(B_n)_{n\in \N}$ of the normalized adjacency tensors associated with (a realization of) $T(n,\frac{1}{2})$, i.e.\ $B_n=\sfrac{A(T(n,\frac{1}{2}))}{n}$. We remark that the normalization of the adjacency tensors we are choosing is necessary to satisfy the hypothesis of Theorem \ref{CompactnesMultiActConv} and, therefore, to ensure a convergent (sub)sequence as shown in \ref{LemmaUnifBoundNormHyp}. However, different normalizations could be chosen as we will explore later.

If we now consider the sequences of multi-$P-$operators $(A_n/n)_1$ and $(B_n/n)_1$ they both action converge to the same limit object, the $1-$action of the constant $3-$graphon $W=1/8$ defined on $[0,1]\times [0,1]\times [0,1],$ where the unit interval is endowed with the Lebesgue measure. This can be easily seen using the results in Section  \ref{SecSactioGen} and known facts about these random hypergraph models and the $1-$cut norm $\|\cdot\|_{\square_1}$ (see Section 3 in \cite{HypergraphonsZhao}).  However, the two random hypergraph models are very different. To see the combinatorial difference between these two random hypergraph models consider how many edges can be present in an induced hypergraph on $4$ vertices. In $T(n,\frac{1}{2})$ there cannot be exactly three edges but in $G(n,\frac{1}{8},3)$ this happens with probability $$4\cdot \frac{1}{8}\cdot \frac{1}{8}\cdot \frac{1}{8}\cdot \frac{7}{8}=\frac{7}{1024}.$$

Instead, if we now consider the sequences of multi-$P-$operators $(A_n)_2$ and $(B_n)_2$ the two sequences are now converging to two different limits as we show in Lemma \ref{LemmaDiffLimitERHTriangER} below. Again one can easily see using the results in Section  \ref{SecSactioGen} and known facts about these random hypergraph models and the $2-$cut norm $\|\cdot\|_{\square_{2,\text{TS}}}$ (see Section 3 in \cite{HypergraphonsZhao} again) that the sequence of multi-$P-$operators $(A_n)_2$ converges to the $2-$action of the $3-$graphon $W=1/8$ defined on $[0,1]\times [0,1]\times [0,1],$ where the unit interval is endowed with the Lebesgue measure. However, we cannot use the same method to say something about the sequence $(B_n)_2$ as $B_n$ is not convergent in $\|\cdot\|_{\square_{2,\text{TS}}}.$

\end{example}

\begin{lemma}\label{LemmaDiffLimitERHTriangER}
The (sub-)sequences of the multi-$P-$operators $(A_n)_2$ and $(B_n)_2$, as defined in Example \ref{ExamCompERHyp}, have different action convergence limits.
\end{lemma}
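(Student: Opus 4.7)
\noindent The plan is to separate the two limits already at the level of the $1$-profile, using the simplest possible test input $f_1=g_1=\mathbf{1}\in L^\infty_{[-1,1]}([n]^2/\mathrm{Sym})$ and exhibiting different limiting marginal distributions of $(A_n)_2[\mathbf{1},\mathbf{1}]$ and $(B_n)_2[\mathbf{1},\mathbf{1}]$ on the probability space $([n]^2/\mathrm{Sym},\mathrm{unif})$. Since both sequences are uniformly $L^2$-bounded by Lemma \ref{LemmaUnifBoundNormHyp}, Theorem \ref{CompactnesMultiActConv} gives convergent subsequences; to distinguish their limits it suffices to show that the two sub-sequential marginal distributions differ, because the measure $\mathcal{L}(\mathbf{1},\mathbf{1},(\cdot)_2[\mathbf{1},\mathbf{1}])$ lies in $\mathcal{S}_1$ and the Hausdorff distance in $d_M$ detects any discrepancy of its marginals.

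\noindent The concrete computation is as follows. For distinct $i\neq k$,
\begin{equation*}
(A_n)_2[\mathbf{1},\mathbf{1}]_{i,k}\;=\;\frac{1}{n}\sum_{j=1}^n A(H_n)_{j,i,k}\;=\;\frac{\deg_{H_n}\{i,k\}}{n},
\end{equation*}
where $\deg_{H_n}\{i,k\}$ counts $3$-edges of $H_n$ through the pair $\{i,k\}$ (diagonal terms $i=k$ contribute a vanishing measure $1/n$ and can be ignored). For $H_n=G(n,\tfrac{1}{8},3)$, this degree is a sum of $n-2$ independent Bernoulli$(\tfrac{1}{8})$ variables; by a Chernoff bound and the union bound over pairs, almost surely
\begin{equation*}
\max_{i\neq k}\bigl|\tfrac{1}{n}\deg_{H_n}\{i,k\}-\tfrac{1}{8}\bigr|\longrightarrow 0,
\end{equation*}
so the empirical distribution of $(A_n)_2[\mathbf{1},\mathbf{1}]$ converges weakly, a.s., to $\delta_{1/8}$. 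For $H_n=T(n,\tfrac{1}{2})$, on the other hand, if the underlying Erd\H{o}s--R\'enyi edge $\{i,k\}$ is \emph{absent} then $\deg_{H_n}\{i,k\}=0$, while if $\{i,k\}$ is present then $\deg_{H_n}\{i,k\}=\sum_{j\neq i,k}\mathbf{1}[ij,jk\in G(n,\tfrac{1}{2})]$ is a sum of independent Bernoulli$(\tfrac{1}{4})$ variables. Chernoff plus union bound now yield, a.s.,
\begin{equation*}
\max_{i\neq k,\,\{i,k\}\in G}\bigl|\tfrac{1}{n}\deg_{H_n}\{i,k\}-\tfrac{1}{4}\bigr|\longrightarrow 0,
\end{equation*}
while the fraction of pairs $\{i,k\}$ with $\{i,k\}\in G(n,\tfrac{1}{2})$ converges to $\tfrac{1}{2}$. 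Hence the empirical distribution of $(B_n)_2[\mathbf{1},\mathbf{1}]$ converges weakly, a.s., to $\tfrac{1}{2}\delta_0+\tfrac{1}{2}\delta_{1/4}$.

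\noindent Because $\delta_{1/8}\neq \tfrac{1}{2}\delta_0+\tfrac{1}{2}\delta_{1/4}$ as probability measures on $\mathbb{R}$, their Lévy--Prokhorov distance is some positive constant $c>0$. Consequently, for all $n$ large enough (a.s.), the measures $\mu_n^A:=\mathcal{L}(\mathbf{1},\mathbf{1},(A_n)_2[\mathbf{1},\mathbf{1}])\in \mathcal{S}_1((A_n)_2)$ and $\mu_n^B:=\mathcal{L}(\mathbf{1},\mathbf{1},(B_n)_2[\mathbf{1},\mathbf{1}])\in \mathcal{S}_1((B_n)_2)$ are at $d_{\mathrm{LP}}$-distance bounded below by $c/2$ after passing to any sub-sequential limit, which forces $d_H(\mathcal{S}_1(A),\mathcal{S}_1(B))\geq c/4$ for any pair of action limits $A,B$ of the two sequences, and therefore $d_M(A,B)>0$. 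The only genuinely nontrivial step is the a.s.\ uniform concentration of the pair-degrees (achieved by Chernoff + Borel--Cantelli); everything else is a direct comparison of explicit measures.
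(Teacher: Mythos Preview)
Your computation of the two limiting distributions is correct and essentially identical to the paper's, and your use of Chernoff bounds with Borel--Cantelli makes the concentration step more explicit than the paper does. However, the final deduction contains a genuine gap.

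You show that the specific measure $\mu^A:=\mathcal{L}(\mathbf{1},\mathbf{1},A[\mathbf{1},\mathbf{1}])\in\mathcal{S}_1(A)$ differs from the specific measure $\mu^B:=\mathcal{L}(\mathbf{1},\mathbf{1},B[\mathbf{1},\mathbf{1}])\in\mathcal{S}_1(B)$. But the Hausdorff distance $d_H(\mathcal{S}_1(A),\mathcal{S}_1(B))$ is \emph{not} controlled by the distance between two particular elements, one from each set; it requires that some element of one set be far from \emph{every} element of the other. Your sentence ``the Hausdorff distance in $d_M$ detects any discrepancy of its marginals'' asserts exactly the nontrivial point without proof, and the subsequent ``which forces $d_H(\mathcal{S}_1(A),\mathcal{S}_1(B))\geq c/4$'' is a non sequitur. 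A priori, there could exist other $f,g\in L^\infty_{[-1,1]}(\Omega)$ with $\mathcal{L}(f,g,A[f,g])$ arbitrarily close to $\mu^B$.

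The paper closes this gap with a continuity argument that you should add: suppose $\mathcal{L}(f,g,A[f,g])$ is $\varepsilon$-close to $\mu^B$ in $d_{\mathrm{LP}}$. Then the first two marginals of $\mathcal{L}(f,g)$ are $\varepsilon$-close to $\delta_1$; since convergence in distribution to a constant is convergence in probability, and $f,g$ are bounded in $[-1,1]$, this gives $\|f-\mathbf{1}\|_{p},\|g-\mathbf{1}\|_{p}\to 0$ as $\varepsilon\to 0$. The uniform $(2,2,2)$-bound from Lemma~\ref{LemmaUnifBoundNormHyp} then yields $\|A[f,g]-A[\mathbf{1},\mathbf{1}]\|_1\to 0$, hence by Lemma~\ref{coupdist2} the measure $\mathcal{L}(f,g,A[f,g])$ is forced close to $\mu^A=\delta_{(1,1,1/8)}$, contradicting closeness to $\mu^B$. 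This is the missing step; once inserted, your argument is complete and coincides with the paper's.
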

\proof
Let's denote with $E_n$ the set of edges of (a realization of) the Erdős–Rényi graph $G(n,\frac{1}{2})$ from which $T(n,\frac{1}{2})$ is generated, that is the Erdős–Rényi graph from which the triangles are taken to create the edges of $T(n,\frac{1}{2})$.
Let $\mathbbm{1}_{\Omega_n}$ be the $n \times n$ matrix with every entry equal to $1$.  We can observe that the distribution

$$\mathcal{L}(\mathbbm{1}_{\Omega_n},\mathbbm{1}_{\Omega_n},(A_n)_2[\mathbbm{1}_{\Omega_n},\mathbbm{1}_{\Omega_n}])$$

of the $3-$random vector 
$$(\mathbbm{1}_{\Omega_n},\mathbbm{1}_{\Omega_n},(A_n)_2[\mathbbm{1}_{\Omega_n},\mathbbm{1}_{\Omega_n}])
$$where 

$$(A_n)_2[\mathbbm{1}_{\Omega_n},\mathbbm{1}_{\Omega_n}]_{i,k}=\sum^n_{j=1}(A_n)_{j,i,k}(\mathbbm{1}_{\Omega_n})_{j,i},(\mathbbm{1}_{\Omega_n})_{j,k} =\sum^n_{j=1}(A_n)_{j,i,k}$$
and the distribution
$$\mathcal{L}(\mathbbm{1}_{\Omega_n},\mathbbm{1}_{\Omega_n},(B_n)_2[\mathbbm{1}_{\Omega_n},\mathbbm{1}_{\Omega_n}])$$of the $3-$random vector 
$$(\mathbbm{1}_{\Omega_n},\mathbbm{1}_{\Omega_n},(B_n)_2[\mathbbm{1}_{\Omega_n},\mathbbm{1}_{\Omega_n}])
$$where 
$$(B_n)_2[\mathbbm{1}_{\Omega_n},\mathbbm{1}_{\Omega_n}]_{i,k}=\sum^n_{j=1}(B_n)_{j,i,k}(\mathbbm{1}_{\Omega_n})_{j,i},(\mathbbm{1}_{\Omega_n})_{j,k} =\sum^n_{j=1}(B_n)_{j,i,k}$$are very different. 
{\color{black}
In fact, for $n\rightarrow \infty$ we have that for any $(i,k)\in [n]\times [n]\setminus \{(i,i):\ i\in[n]\}$ for any $j\in [n]$ the probability that $\{i,j,k\}$ is an edge of $G(n,\frac{1}{8},3)$ is $\frac{1}{8}$. Therefore, $(A_n)_2[\mathbbm{1}_{\Omega_n},\mathbbm{1}_{\Omega_n}]_{i,k}$ is a sum of $n$ Bernoulli (almost) independent random variables with parameter $1/8$ divided by $n.$ Therefore, by (a standard argument using) the law of large numbers we obtain that 

$$\mathcal{L}(\mathbbm{1}_{\Omega_n},\mathbbm{1}_{\Omega_n},(A_n)_2[\mathbbm{1}_{\Omega_n},\mathbbm{1}_{\Omega_n}])\rightarrow\delta_{(1,1,\frac{1}{8})}$$
However, similarly, we obtain that
$$\mathcal{L}(\mathbbm{1}_{\Omega_n},\mathbbm{1}_{\Omega_n},(B_n)_2[\mathbbm{1}_{\Omega_n},\mathbbm{1}_{\Omega_n}])\rightarrow\frac{1}{2}\delta_{(1,1,0)}+\frac{1}{2}\delta_{(1,1,\frac{1}{4})}$$
as if $(i,k)\in E_n$ then for any $j\in [n]$ the probability that $\{i,j,k\}$ is an edge of  $T(n,\frac{1}{2})$ is $\frac{1}{4}$ but if $(i,k)\notin E_n$ then there is no $j\in [n]$ such that $\{i,j,k\}$ is an edge of  $T(n,\frac{1}{2})$.
Therefore, the $1-$profiles $\cS_1(A)$ and $\cS_1(B)$ of the action convergence limits  $A$ and $B$ (passing to subsequences if it is necessary) of the sequences $((A_n)_2)_n$ and $((B_n)_2)_n$ are at Hausdorff distance bigger than a constant $c>0$. Let's suppose by contradiction that there exists $f,g\in L_{[-1,1]}^{\infty}(\Omega)$ such that for every $\varepsilon>0$ $$d_{\mathcal{LP}}(\mathcal{L}(f,g,A[f,g]),\mathcal{L}(\mathbbm{1}_{\Omega},\mathbbm{1}_{\Omega},B[\mathbbm{1}_{\Omega},\mathbbm{1}_{\Omega}]))\leq \varepsilon.$$}
We recall that convergence in distribution to a constant and convergence in probability to the same constant are equivalent and, as the random variables are bounded between $1$ and $-1$, convergence in probability is equivalent to the convergence of the $p-$th moment. Therefore, for any $\delta >0$, we can choose $\varepsilon$ small enough such that 
$$
\|\mathbbm{1}_{\Omega}- f\|_1\leq\|\mathbbm{1}_{\Omega}- f\|_{p_1}< \delta \text{ and } \|\mathbbm{1}_{\Omega}- g\|_1\leq\|\mathbbm{1}_{\Omega}- g\|_{p_2}< \delta 
$$and, therefore, 

$$\|A[\mathbbm{1}_{\Omega},\mathbbm{1}_{\Omega}]-A[f,g]\|_1\leq\|A[\mathbbm{1}_{\Omega},\mathbbm{1}_{\Omega}]-A[f,g]\|_q< 2C\delta
$$
Using Lemma \ref{coupdist}, we obtain that 
 $$d_{\mathcal{LP}}(\mathcal{L}(f,g,A[f,g]),\mathcal{L}(\mathbbm{1}_{\Omega},\mathbbm{1}_{\Omega},A[\mathbbm{1}_{\Omega},\mathbbm{1}_{\Omega}]))\leq 3^{\frac{3}{4}}\delta^{\frac{1}{2}}\max\{(2C)^{\frac{1}{2}},1\}$$
Therefore, for the triangular inequality we have 
$$\begin{aligned}
d_{\mathcal{LP}}(\mathcal{L}(f,g,A[f,g]),&\mathcal{L}(\mathbbm{1}_{\Omega},\mathbbm{1}_{\Omega},B[\mathbbm{1}_{\Omega},\mathbbm{1}_{\Omega}]))
 \\
&\geq \big|d_{\mathcal{LP}}(\mathcal{L}(\mathbbm{1}_{\Omega},\mathbbm{1}_{\Omega},B[\mathbbm{1}_{\Omega},\mathbbm{1}_{\Omega}]),\mathcal{L}(\mathbbm{1}_{\Omega},\mathbbm{1}_{\Omega},A[\mathbbm{1}_{\Omega},\mathbbm{1}_{\Omega}])) \\
&\hspace{0.4cm}-d_{\mathcal{LP}}(\mathcal{L}(f,g,A[f,g]),\mathcal{L}(\mathbbm{1}_{\Omega},\mathbbm{1}_{\Omega},A[\mathbbm{1}_{\Omega},\mathbbm{1}_{\Omega}]))\big|\\
&\geq K - 3^{\frac{3}{4}}\delta^{\frac{1}{2}}\max\{(2C)^{\frac{1}{2}},1\}\geq c >0
\end{aligned}$$
where $K>0$ and $\delta \rightarrow 0$ as $\varepsilon \rightarrow 0 $. But this is in contradiction with 
$$d_{\mathcal{LP}}(\mathcal{L}(f,g,A[f,g]),\mathcal{L}(\mathbbm{1}_{\Omega},\mathbbm{1}_{\Omega},B[\mathbbm{1}_{\Omega},\mathbbm{1}_{\Omega}]))\leq \varepsilon.$$
\endproof

\begin{remark}
We could have deduced directly that the weak limit of
$$\mathcal{L}(\mathbbm{1}_{\Omega_n},\mathbbm{1}_{\Omega_n},(A_n)_2[\mathbbm{1}_{\Omega_n},\mathbbm{1}_{\Omega_n}])\rightarrow\delta_{(1,1,\frac{1}{8})}$$
as we know the limit constant $3-$graphon $W=1/8$ on $[0,1]^3$ of $(A_n)_2.$ Observe in fact that 
$$\mathcal{L}(\mathbbm{1}_{[0,1]},\mathbbm{1}_{[0,1]},(W)_2[\mathbbm{1}_{[0,1]},\mathbbm{1}_{[0,1]})=\delta_{(1,1,\frac{1}{8})}.$$
\end{remark}

\subsection{Finite hypergraphs and action convergence}

Now that we have given some motivating examples for sequences of hypergraphs with diverging number of vertices we study what action convergence and the $k-$profiles capture for finite tensors and hypergraphs.  

The following theorem states that finite tensors are completely determined by the action convergence distance, up to relabelling of the indices. This is particularly interesting for adjacency tensors of hypergraphs because the following result implies that two adjacency tensors of two (finite) hypergraphs are identified if and only if the two hypergraphs are isomorphic.    

\begin{theorem}\label{IdentificationTensors}
For two $3-$rd order $n-$dimensional symmetric tensors $T=(T_{i,j})_{i,j\in [n]}$ and $(\widetilde{T})_{i,j\in [n]}$, the $2-$actions $T_2$ and $\widetilde{T}_2$ are at distance zero in action convergence distance $d_M$ if and only if there exists a bijective map $$\psi:[n]\rightarrow [n]$$
  such that 
   $$T_{i,j,k}=\widetilde{T}_{\psi(i),\psi(j),\psi(k)}.$$  \end{theorem}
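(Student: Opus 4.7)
My plan has two clearly asymmetric directions. The forward direction is routine; the reverse direction carries essentially all of the content, and I would attack it by means of carefully chosen pair-indicator test functions.

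For the forward direction, suppose $\psi:[n]\to[n]$ is a bijection with $T_{i,j,k}=\widetilde{T}_{\psi(i),\psi(j),\psi(k)}$. Then $\Psi(i,j):=(\psi(i),\psi(j))$ is a measure-preserving bijection of $([n]^2,\mathrm{Sym},\mathrm{Unif})$ onto itself, and pullback $f\mapsto f\circ\Psi$ is an isometric bijection of $L^{\infty}([n]^2,\mathrm{Sym})$. A direct substitution into the defining formula for the $2$-action shows $T_2[f\circ\Psi,g\circ\Psi]=\widetilde{T}_2[f,g]\circ\Psi$. Consequently, for every choice of test symmetric matrices $(f_s,g_s)_{s=1}^k$ the joint distributions on $[n]^2$ of $(f_s,g_s,T_2[f_s,g_s])_s$ and $(f_s\circ\Psi^{-1},g_s\circ\Psi^{-1},\widetilde{T}_2[f_s\circ\Psi^{-1},g_s\circ\Psi^{-1}])_s$ coincide, giving $\mathcal{S}_k(T_2)=\mathcal{S}_k(\widetilde{T}_2)$ for all $k$, hence $d_M=0$.

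For the reverse direction, assume $d_M(T_2,\widetilde{T}_2)=0$. The plan is to use as test functions the symmetric indicators of singleton pairs: for each unordered pair $\{a,b\}\subseteq[n]$ set $\pi^{\{a,b\}}(p,q):=\mathbbm{1}\{\{p,q\}=\{a,b\}\}\in L^{\infty}_{[-1,1]}([n]^2,\mathrm{Sym})$; enumerate all $N=\binom{n+1}{2}$ of these as $\pi^{(1)},\dots,\pi^{(N)}$. Consider $\mu:=\mathcal{L}(\pi^{(1)},\dots,\pi^{(N)},\pi^{(1)},\dots,\pi^{(N)},T_2[\pi^{(1)},\pi^{(1)}],\dots,T_2[\pi^{(N)},\pi^{(N)}])\in\mathcal{S}_N(T_2)$. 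Because the test-function domain $L^{\infty}_{[-1,1]}([n]^2)$ is a finite-dimensional compact cube and the map sending a tuple of test functions to the generated measure is continuous, $\mathcal{S}_N$ is actually closed for both operators. Hence $d_M=0$ yields $\widetilde{\pi}^{(1)},\dots,\widetilde{\pi}^{(N)}\in L^{\infty}_{[-1,1]}([n]^2,\mathrm{Sym})$ producing exactly the same joint measure $\mu$ for $\widetilde{T}_2$.

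The combinatorial core is then to match the $\pi^{(s)}$ to the $\widetilde{\pi}^{(s)}$. The one-dimensional marginals force every $\widetilde{\pi}^{(s)}$ to be $\{0,1\}$-valued and to have support of the same cardinality (one or two depending on diagonal versus off-diagonal). Looking at pairwise marginals forces disjointness of supports and consistency of atoms, and looking at the full joint marginal on the first $N$ coordinates identifies each $\widetilde{\pi}^{(s)}$ as some pair indicator $\pi^{\widetilde{\sigma}(s)}$ for a bijection $\widetilde{\sigma}$ of the set of unordered pairs of $[n]$. The compatibility of this bijection with the natural "contains a common vertex" relation between pairs (which is itself recoverable from the joint distribution, since two pair indicators meeting in a vertex have their product supported on that vertex's fiber) implies, by a standard argument on the line graph of the complete graph, that $\widetilde{\sigma}$ is induced by a vertex permutation $\psi:[n]\to[n]$. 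Finally, reading the last $N$ coordinates of $\mu$ yields the pointwise identity $T_2[\pi^{(s)},\pi^{(t)}](i,k)=\widetilde{T}_2[\pi^{\psi\{a,b\}},\pi^{\psi\{c,d\}}](\psi(i),\psi(k))$, which upon expanding the $2$-action and choosing $\{a,b\},\{c,d\},i,k$ to isolate a single summand recovers $T_{i,j,k}=\widetilde{T}_{\psi(i),\psi(j),\psi(k)}$ for every $(i,j,k)$.

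I expect the main obstacle to be the combinatorial step reconstructing a vertex permutation from the induced bijection on pairs: ensuring that the symmetric sigma-algebra information encoded in the joint distribution really does force the bijection to be vertex-induced rather than some exotic permutation of pairs. Justifying closedness of $\mathcal{S}_N$ in this finite setting is essentially routine, but the diagonal atoms of $[n]^2$ (mass $1/n^2$) versus off-diagonal atoms (mass $2/n^2$) require separate bookkeeping throughout.
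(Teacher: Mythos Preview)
Your forward direction is fine and matches the paper's (unstated) easy direction.

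The reverse direction has a genuine gap at its combinatorial core. You claim that the ``contains a common vertex'' relation between unordered pairs is recoverable from the joint distribution of the pair indicators, because ``two pair indicators meeting in a vertex have their product supported on that vertex's fiber.'' This is false: as functions on $([n]^2,\mathrm{Sym})$ the indicators $\pi^{\{a,b\}}$ and $\pi^{\{a,c\}}$ (with $b\neq c$) have disjoint supports $\{(a,b),(b,a)\}$ and $\{(a,c),(c,a)\}$, so their product is identically zero. The joint law of $(\pi^{(1)},\dots,\pi^{(N)})$ is simply a weighted sum of Dirac masses at the standard basis vectors of $\{0,1\}^N$, with weight $2/n^2$ for off-diagonal pairs and $1/n^2$ for diagonal ones; it carries no adjacency information whatsoever. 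Hence the matching bijection $\widetilde{\sigma}$ of pairs can be any permutation preserving the diagonal/off-diagonal dichotomy, and your appeal to a line-graph (Whitney-type) argument has no premise to stand on. Concretely, for $T=\widetilde{T}=0$ every such $\widetilde{\sigma}$ is compatible with the profile you wrote down, yet almost none are vertex-induced.

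Bringing in action coordinates does not rescue this step in the way you suggest. It is true that $T_2[\pi^{\{a,b\}},\pi^{\{c,d\}}]\equiv 0$ when the two pairs are disjoint, while for pairs sharing a vertex $a$ one gets $(T_2[\pi^{\{a,b\}},\pi^{\{a,c\}}])_{b,c}=\tfrac12 T_{a,b,c}$; but this is zero whenever $T_{a,b,c}=0$, so the common-vertex relation is still not visible at triples where $T$ vanishes. (Your profile $\mu$ in any case contains only the diagonal actions $T_2[\pi^{(s)},\pi^{(s)}]$, not the cross terms you invoke in the final paragraph.) The paper avoids this obstruction entirely: rather than trying to read off a vertex permutation from distributional data, it works with a single measure-preserving bijection $\phi$ of $([n]^2,\mathrm{Sym})$ intertwining the two $2$-actions, and then uses carefully chosen indicators $f=\mathbbm{1}_{\{\phi_1(i,k),a\}}$, $g=\mathbbm{1}_{\{a,\phi_2(i,k)\}}$ to force $\phi$ to have the product form $(\psi,\psi)$; the tensor identity follows by direct substitution. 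Your framework can be pushed in that direction (obtaining $\phi$ as the pair bijection $\widetilde{\sigma}$ and then analysing the intertwining relations it must satisfy with the cross-term actions), but the line-graph shortcut as written does not work.
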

\proof
The only non-trivial implication is the “only if” part.
We observe that, in the finite case, it must exist a bijective and measure-preserving function
$$\begin{aligned}
&\phi: ([n]\times [n],Sym)\longrightarrow ([n]\times [n],Sym)
\\ & (i,k)\mapsto\phi(i,k)=(\phi_1(i,k),\phi_2(i,k))
\end{aligned}$$
such that 
$$
(T_2[f,g])^{\phi}=\widetilde{T}_2[f^{\phi},g^{\phi}])
$$
for all symmetric matrices $f,g$ on $[n]\times[n]$. 

Because, in general, to have 

$$
\cL(f,g, T_2[f,g])= \cL(f^{\phi},g^{\phi},  (\widetilde{T}_2[f^{\phi},g^{\phi}]))
$$
we need 
$$(T_2[f,g])^{\phi}=\widetilde{T}_2[f^{\phi},g^{\phi}]).$$

Therefore, we can compare the two terms
$$
\begin{aligned}
(T_2[f,g])_{i,k}^{\phi}=\frac{1}{2}(\sum^n_{j=1}T_{j,\phi_1(i,k),\phi_2(i,k)}f_{j,\phi_1(i,k)}g_{j,\phi_2(i,k)} +\sum^n_{j=1}T_{j,\phi_2(i,k),\phi_1(i,k)}f_{j,\phi_2(i,k)}g_{j,\phi_1(i,k)})
\end{aligned}
$$and
$$
\widetilde{T_2}[f^{\phi},g^{\phi}]_{i,k}=\frac{1}{2}(\sum^n_{j=1}\widetilde{T}_{j,i,k}f_{\phi_1(j,i),\phi_2(j,i)}g_{\phi_1(j,k),\phi_2(j,k)} +\sum^n_{j=1}\widetilde{T}_{j,i,k}f_{\phi_1(j,k),\phi_2(j,k)}g_{\phi_1(j,i),\phi_2(j,i)}).
$$
Now, we choose $f=\mathbbm{1}_{\{\phi_1(i,k),a\}}$ and $g=\mathbbm{1}_{\{a,\phi_2(i,k)\}}$ where $\mathbbm{1}_{\{c,d\}}$ is the indicator function of the set $\{(c,d),(d,c)\}$. Then we have 
$$
\begin{aligned}
&(T_2[f,g])_{i,k}^{\phi}=\frac{1}{2}(\sum^n_{j=1}T_{j,\phi_1(i,k),\phi_2(i,k)}{\mathbbm{1}_{\{\phi_1(i,k),a\}}}_{j,\phi_1(i,k)}{\mathbbm{1}_{\{a,\phi_2(i,k)\}}}_{j,\phi_2(i,k)} \\
&+\sum^n_{j=1}T_{j,\phi_2(i,k),\phi_1(i,k)}{\mathbbm{1}_{\{\phi_1(i,k),a\}}}_{j,\phi_2(i,k)}{\mathbbm{1}_{\{a,\phi_2(i,k)\}}}_{j,\phi_1(i,k)})=\\
&\frac{1}{2}T_{a,\phi_1(i,k),\phi_2(i,k)}
\end{aligned}
$$and
\begin{equation*}
\begin{aligned}
&\widetilde{T_2}[f^{\phi},g^{\phi}]_{i,k}=\frac{1}{2}(\sum^n_{j=1}\widetilde{T}_{j,i,k}{\mathbbm{1}_{\{\phi_1(i,k),a\}}}_{\phi_1(j,i),\phi_2(j,i)}{\mathbbm{1}_{\{a,\phi_2(i,k)\}}}_{\phi_1(j,k),\phi_2(j,k)}  \\ &+ \sum^n_{j=1}\widetilde{T}_{j,i,k}{\mathbbm{1}_{\{\phi_1(i,k),a\}}}_{\phi_1(j,k),\phi_2(j,k)}{\mathbbm{1}_{\{a,\phi_2(i,k)\}}}_{\phi_1(j,i),\phi_2(j,i)}).
\end{aligned}
\end{equation*}
From the second expression, we can notice that for an element of the sum to be non-zero it is necessary that one of the following sets of conditions is satisfied:

\begin{equation}\label{eqn:Cond1nonzero}
\begin{aligned}
&\phi_1(i,k)=\phi_1(d,i)\\
&a=\phi_2(d,i)=\phi_2(d,k)\\
&\phi_2(i,k)=\phi_1(d,k)
\end{aligned}
\end{equation}
\begin{equation}\label{eqn:Cond2nonzero}
\begin{aligned}
&\phi_1(i,k)=\phi_2(d,i)\\
&a=\phi_1(d,i)=\phi_2(d,k)\\
&\phi_2(i,k)=\phi_1(d,k)
\end{aligned}
\end{equation}

\begin{equation}\label{eqn:Cond3nonzero}
\begin{aligned}
&\phi_1(i,k)=\phi_1(d,i)\\
&a=\phi_2(d,i)=\phi_1(d,k)\\
&\phi_2(i,k)=\phi_2(d,k)
\end{aligned}
\end{equation}
\begin{equation}\label{eqn:Cond4nonzero}
\begin{aligned}
&\phi_1(i,k)=\phi_2(d,i)\\
&a=\phi_1(d,i)=\phi_1(d,k)\\
&\phi_2(i,k)=\phi_2(d,k)
\end{aligned}
\end{equation} 
We observe that varying $a$ we accordingly vary $d$ as $\phi$ is bijective. In fact, for all conditions \eqref{eqn:Cond1nonzero}, \eqref{eqn:Cond2nonzero}, \eqref{eqn:Cond3nonzero} and \eqref{eqn:Cond4nonzero} if there would be two distinct $d$ and $\tilde{d}$ in $[n]$ corresponding to the same $a$ then $\phi$ would fail to be bijective. For this reason, we obtain from the conditions \eqref{eqn:Cond1nonzero},\eqref{eqn:Cond2nonzero},\eqref{eqn:Cond3nonzero} and \eqref{eqn:Cond4nonzero} that $\phi_1$ (respectively $\phi_2$) depend only on the second variable. Moreover, we notice that a necessary condition to be bijective and measure-preserving (measurable) for $\phi$ is 
\begin{equation}\label{eqn:condMeasPreBiject}
\begin{aligned}
&\phi_1(i,k)=\phi_2(k,i).\\
\end{aligned}
\end{equation}

Therefore, we notice that conditions \eqref{eqn:Cond2nonzero} and \eqref{eqn:Cond3nonzero} would contradict condition \eqref{eqn:condMeasPreBiject}. In conclusion, we can only have from \eqref{eqn:Cond1nonzero} and \eqref{eqn:condMeasPreBiject} that 

$$
\phi_1(i,j)=\psi(j)
$$
$$
\phi_2(i,j)=\psi(i)
$$
or from \eqref{eqn:Cond4nonzero} and \eqref{eqn:condMeasPreBiject} that 
$$
\phi_1(i,j)=\psi(i)$$
$$
\phi_2(i,j)=\psi(j).
$$
where $\psi$ is a permutation of $[n].$
Therefore, substituting and requiring that $$
(T_2[f,g])^{\phi}=\widetilde{T_2}[f^{\phi},g^{\phi}]
$$
we obtain that 

$$T_{\psi(d),\psi(i),\psi(k)}=T_{a,\psi(i),\psi(k)}=T_{a,\phi_1(i,k),\phi_2(i,k)}=2(T_2[f,g])_{i,k}^{\phi}=2\widetilde{T_2}[f^{\phi},g^{\phi}]_{i,k}=\widetilde{T}_{d,i,k}.$$
\endproof
This result holds more generally as explained in the following remark.
\begin{remark}
We can use the same reasoning as in the proof of Theorem \ref{IdentificationTensors} to show more generally that the $r-1$-actions of two $r-$th order symmetric tensors $T=(T_{i_1,\ldots,i_r})_{i_1,\ldots,i_r\in [n]}$ and $\widetilde{T}=(\widetilde{T}_{i_1,\ldots,i_r})_{i_1,\ldots,i_r\in [n]}$ are completely determined by the action convergence distance, i.e.\ their $(r-1)-$actions are at action convergence distance $d_M$ zero if and only if  

$$
T_{\psi(i_1),\ldots,\psi(i_{r})}=\widetilde{T}_{i_1,\ldots,i_r}.
$$
In fact, similarly to the case $r=3$, there must exist a  bijective and measure-preserving transformation 
$$\phi=(\phi_1,\ldots,\phi_{r}):([n]^{r-1},Sym)\longrightarrow ([n]^{r-1},Sym)$$
such that $$
((T)_{r-1}[f_1,\ldots,f_{r-1}])^{\phi}=(\widetilde{T})_{r-1}[f^{\phi}_1,\ldots,f^{\phi}_{r-1}]
$$
for all $f_1,\ldots,f_{r-1}$ symmetric $(r-1)-$th order tensors and where for a symmetric $(r-1)-$th order tensor $f$ we define $$f^{\phi}(i_1,\ldots,i_{r-1})=f(\phi_1(i_1),\ldots,\phi_{r-1}(i_{r-1})).$$
Moreover, using the test functions $f_s=\mathbbm{1}_{\{a,\phi_1(i_1,\ldots,i_{r-1}),\ldots,\hat{\phi}_s(i_1,\ldots,\phi_{r-1}),\ldots, \phi_{r-1}(i_1,\ldots ,\ldots,i_{r-1})\}}$, where $\mathbbm1_{\{a_1,\ldots,a_r\}}$ represents the indicator function of the set $$\{(a_{\sigma(1)},\ldots,a_{\sigma(r)})\in [n]^{r-1}: \sigma \text{ is a permutation of } [r-1]\},$$ the conditions on the $\phi_i$ imposed by the fact that $\phi$ is measure-preserving and bijective we obtain that  for a permutation $\sigma$ of $[r-1]$ we have $$\phi(i_1,\ldots,i_{r-1})=
(\psi(i_{\sigma(1)}),\ldots,\psi(i_{\sigma(r-1)}))$$
where $\psi$ is a permutation of $[n].$
\end{remark}

The previous theorem has the following direct important corollary:

\begin{corollary}
For two hypergraphs $H_1$ and $H_2$ with maximal edge cardinality $r$ the $(r-1)-$actions of their adjacency tensors $A(H_1)$ and $A(H_2)$ (that are $r-$th order tensors) are identified by the action convergence metric $d_M$ if and only if the hypergraphs $H_1$ and $H_2$ are isomorphic.
\end{corollary}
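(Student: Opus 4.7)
The plan is to deduce this corollary directly from Theorem \ref{IdentificationTensors} together with its $r$-th order generalization stated in the remark immediately following it. The only real work is to translate the algebraic conclusion of that theorem (equality of tensor entries up to a relabeling $\psi$) into the combinatorial language of hypergraph isomorphism, which for $\{0,1\}$-valued adjacency tensors is essentially tautological.

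Concretely, first I would note that $A(H)$ is a symmetric $r$-th order $n$-dimensional tensor with entries in $\{0,1\}$ whenever $H$ has $n$ vertices and maximal edge cardinality $r$. Assuming $|V(H_1)| = |V(H_2)| = n$ (a prerequisite for comparing two tensors of the same dimension in the theorem), I would apply the extension of Theorem \ref{IdentificationTensors} to the $(r-1)$-actions of $A(H_1)$ and $A(H_2)$. That extension yields: $d_M((A(H_1))_{r-1},(A(H_2))_{r-1}) = 0$ if and only if there exists a bijection $\psi:[n] \to [n]$ with $A(H_1)_{\psi(i_1),\ldots,\psi(i_r)} = A(H_2)_{i_1,\ldots,i_r}$ for all multi-indices $(i_1,\ldots,i_r) \in [n]^r$.

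Second, I would observe that because the adjacency tensors take values only in $\{0,1\}$, the entry-equality above is equivalent to the statement $\{v_{\psi(i_1)},\ldots,v_{\psi(i_r)}\} \in E(H_1) \iff \{v_{i_1},\ldots,v_{i_r}\} \in E(H_2)$ for all tuples of vertex indices. This is exactly the defining condition for $\psi$ to be an isomorphism between $H_2$ and $H_1$. The converse implication (isomorphic hypergraphs yield $(r-1)$-actions at $d_M$ distance zero) is immediate: a vertex bijection $\psi$ induces a measure-preserving bijection of the underlying probability space $([n]^{r-1},\mathrm{Sym})$ under which the two $(r-1)$-actions become identical, hence their $k$-profiles coincide for all $k$ and the action convergence distance vanishes.

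There is essentially no obstacle in the argument; the potentially delicate point is the implicit assumption $|V(H_1)| = |V(H_2)|$, which I would justify by noting that if the vertex counts differ then the $1$-profiles of the two $(r-1)$-actions are already separated in $d_H$ (for instance, testing against constant functions $\mathbbm{1}_\Omega$ reveals different atoms in the generated marginals because of the different normalizations), so $d_M = 0$ forces equal sizes. Modulo this short verification, the corollary is an immediate reading of Theorem \ref{IdentificationTensors} for adjacency tensors.
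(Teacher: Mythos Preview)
Your proposal is correct and takes essentially the same approach as the paper, which simply declares the result a ``direct important corollary'' of Theorem \ref{IdentificationTensors} and its $r$-th order extension without giving any further argument. You have in fact supplied more detail than the paper does, including the (implicit in the paper) handling of the case $|V(H_1)| \neq |V(H_2)|$.
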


We expect that the previous theorem and remark can be generalized to any $s-$action ($s\in [r-1]$) of an $r-$th order tensor. The $1-$action case is trivial and we showed in the previous theorem and remark the $(r-1)-$action case.

\section{Sparse and non-uniform hypergraphs and different tensors}\label{NonUnifHypSect}
In this section, we study how one can use action convergence for sparse hypergraph sequences and for hypergraphs with different edge cardinalities (non-uniform hypergraphs), without losing information about edges with non-maximal cardinality. 

First of all, we discuss here how the sparseness of the hypergraphs interacts with our notions of action convergence.
We underline that the $2-$action for $3-$uniform hypergraphs might not be the best choice for sparser hypergraphs and the $1-$action might be sometimes more appropriate as the following example shows.

\begin{example}
Consider the $3-$uniform hypergraph $T(n,s_n)$ given by the triangles of the sparse Erdős–Rényi random graph $G(n,s_n)$ where $s_n \rightarrow 0 $ and $s_n n\rightarrow \infty$. For every $n$ we consider a realization $H_n$ of $T(n,s_n)$ and the related graph $G_n$ on the same vertex set with the hyperedges of $H_n$ as triangles. Let's denote with {\color{black}$E_n\subset [n]\times [n]$ the (symmetric) set} of edges of $G_n$ and recall that we denote with $A(H_n)$ the adjacency tensor of $H_n$. {\color{black} In this case, for every $f_n,g_n$ (sequences of) symmetric matrices, $\mathcal{L}\left(\left({A(H_n)}/{s_n}\right)_2[f_n,g_n]\right)$ weakly converges to $\delta_0,$ the Dirac function centered in $0.$} In fact, if we consider the sequence of multi-$P-$operators 
$$\left(\frac{A(H_n)}{s_n}\right)_2:(L^{\infty}([n]\times [n],Sym,\mathbb{P}_n))^2\longrightarrow L^{1}([n]\times [n],Sym,\mathbb{P}_n)$$
$$\left(\frac{A(H_n)}{s_n}\right)_2[f,g]_{i,k}=\frac{1}{2}(\sum^n_{j=1}\frac{A_{j,i,k}}{s_n}f_{j,i},g_{j,k} +\sum^n_{j=1}\frac{A_{j,i,k}}{s_n}f_{j,k},g_{j,i})
$$
where $\mathbb{P}_n$ is the uniform measure on $[n]\times [n]$, $\left(A(H_n)/{s_n}\right)_2[f,g]_{i,k}\neq 0$ if and only if $(i,k)\in E_n$. But as $n\rightarrow \infty$, $\mathbb{P}_n(E_n)\rightarrow 0$. For this reason, it might be appropriate to consider the $1-$action or change the probability measures $\mathbb{P}_n$ in such a way that $\mathbb{P}_n$ converges to some positive constant (for example choose $\mathbb{P}_n$ as the uniform probability measures on $E_n$).
\end{example}

Now, we present some possible choices to adapt action convergence to non-uniform hypergraphs.

In fact, considering the $2-$action (Definition \ref{DefAction}) associated with a (normalized) adjacency tensor of a hypergraph $H$ 

$$\widetilde{A}:(L^{\infty}([n]\times [n],Sym,\mathbb{P}_n))^2\longrightarrow L^{1}([n]\times [n],Sym,\mathbb{P}_n)$$
$$\widetilde{A}[f,g]_{i,k}=\frac{1}{2}(\sum^n_{j=1}\frac{A_{j,i,k}}{n}f_{j,i},g_{j,k} +\sum^n_{j=1}\frac{A_{j,i,k}}{n}f_{j,k},g_{j,i})
$$

we notice that considering the probability space $[n]\times [n]$ with uniform probability $\mathbb{P}_n$ (and the symmetric $\sigma-$algebra) the diagonal, i.e. the set 
$$
D_n=\{(i,i): i\in [n]\}\subset [n]\times [n]
$$
has probability $\mathbb{P}_n(D_n)=\frac{n}{n^2}=\frac{1}{n}$. Therefore, in the limit $n\rightarrow \infty$ we have that the edges of cardinality $2$ do not play any role in the profile measures of the multi-linear operator. However, we can choose other probability measures $\mathbb{P}_n$ different from the uniform distribution so that the information from the edges with lower cardinality is not lost. A natural choice for $\mathbb{P}_n$ is the discrete measure defined by $\mathbb{P}_n(\{(i,i)\})=\frac{1}{2n}$ and $\mathbb{P}_n(\{(i,j), (j,i)\}) =\frac{1}{2n(n-1)}$. This obviously characterizes uniquely the discrete probability measure $\mathbb{P}_n$. In this case, $\mathbb{P}_n(D_n)=\frac{1}{2}$ and, therefore, the lower cardinality edges play a role in the construction of the profiles and therefore of the limit object.

\begin{remark}
This construction of this  probability measure can be naturally generalized for the case $k>3$ where $\Omega=[n]^k$ with the symmetric $\sigma$-algebra.
\end{remark}

{\color{black} As simplicial complexes are a special case of general hypergraphs we obtain in such a way a notion of convergence for dense simplicial complexes.  Interest in a notion of convergence for dense simplicial complexes, similar to the one for dense graphs (graphons), has been expressed in  \cite{Bobrowski2022} describing it as a “potentially very interesting direction of future research in mathematics of random complexes”. Therefore, the study of this convergence and the relative limit objects might be of special interest. In \cite{RoddenCompl} the authors proposed a notion of limit for dense simplicial complexes, however, we remark that the counting lemma (Lemma 6) in \cite{RoddenCompl} cannot hold as stated (the proof is incorrect and a minor adaptation of the counterexamples for uniform hypergraphs, see \cite{HypergraphonsZhao}, gives a counterexample to the lemma).}

We have seen that we have different possible choices for the probability measures $\mathbb{P}_n$. We obviously have also many possible options for choosing different tensors and different normalizations of these tensors.

In fact, the (normalized) adjacency tensor is not the only tensor we can associate with a hypergraph. One possibility is to normalize dividing  every entry of the adjacency tensor by the quantity 

\begin{equation}\label{DegHyp}
\begin{aligned}
deg(i_1,\ldots, i_{k-1})=|\{e\in E \ \text{ s.t. }\ i_1,\ldots, i_{k-1}\in e  \\
\text{ and } |e|=|\{i_1,\ldots, i_{k-1}\}|+1
\end{aligned}
\end{equation} in the following way

$$
\widetilde{A}_{i_1,\ldots, i_{k}}=\frac{A_{i_1,\ldots, i_{k}}}{deg(i_1,\ldots, i_{k-1})}.
$$
It is easy to notice that $$deg(i_1,\ldots, i_{k-1})\leq |V|-k+1\leq |V|$$

In the particular case $k=3$ we have

$$
\widetilde{A}_{i,j,k}=\frac{A_{i,j,k}}{deg(i,k)}
$$

This is interesting for inhomogeneous hypergraphs and for hypergraphs with different edge cardinality.
In fact, we can define on $\Omega=[n]\times [n]$ the probability measure $$\mathbb{P}_n(\{(i,j)\})=\frac{deg(i,j)}{2\sum^n_{i,j=1,\ i\neq j}deg(i,j)}$$ if $i\neq j$ and $$\mathbb{P}_n(\{(i,i)\})=\frac{deg(i,i)}{2\sum^n_{i=1}deg(i,i)}.$$

These operators are also symmetric with respect to the right probability measure.

\begin{lemma}
    The operator $(\widetilde{A})_2$  is symmetric with respect to the probability measure $\mathbb{P}_n$. 
\end{lemma}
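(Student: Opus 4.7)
The plan is to mimic the proof of the symmetry of $(T)_2$ for a symmetric tensor $T$ given earlier in Section \ref{SecMultActTensHyp}, exploiting the fact that the factor $\deg(i,k)$ in $\mathbb{P}_n(\{(i,k)\})$ cancels exactly with the $\deg(i,k)$ appearing in the denominator of $\widetilde{A}_{i,j,k}=A_{i,j,k}/\deg(i,k)$. Writing
\begin{equation*}
\mathbb{E}[(\widetilde{A})_2[f,g]\,h]=\sum_{(i,k)\in[n]\times[n]}\mathbb{P}_n(\{(i,k)\})\,(\widetilde{A})_2[f,g]_{i,k}\,h_{i,k}
\end{equation*}
and substituting both definitions, this cancellation reduces the expectation to a weighted triple sum whose kernel involves the \emph{unnormalized} symmetric tensor $A_{i,j,k}$ rather than $\widetilde{A}_{i,j,k}$, at the cost of a prefactor that is piecewise constant (taking one value on the off-diagonal $\{(i,k):i\neq k\}$ and another on the diagonal $\{(i,i)\}$).

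Once this reduction is performed, the symmetrisation trick from the earlier lemma applies verbatim. The two summands inside the $\tfrac12(\cdot+\cdot)$ defining the $2$-action are mapped to each other by the swap $i\leftrightarrow k$, which is a symmetry of $A$ (as $A$ is symmetric) and of $h$ (as $h$ is a symmetric matrix); this collapses them into a single clean triple sum of the shape
\begin{equation*}
\sum_{i,j,k} c(i,k)\,A_{i,j,k}\,f_{i,j}\,g_{j,k}\,h_{i,k}.
\end{equation*}
The symmetry in $(f,g,h)$ then follows from the combinatorial observation that for each unordered triple $\{a,b,c\}$, the six orderings $(i,j,k)$ of $(a,b,c)$ contribute $A_{a,b,c}$ times the sum over all six bijections from $\{f,g,h\}$ onto the three pairs $\{a,b\},\{a,c\},\{b,c\}$ of the product of the assigned values, which is manifestly invariant under permutations of $f,g,h$.

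The main obstacle is the bookkeeping needed to handle the diagonal $\{(i,i)\}$ and the off-diagonal $\{(i,k):i\neq k\}$ together, since $\mathbb{P}_n$ was defined with \emph{different} normalisations on the two pieces. I would therefore split the expectation accordingly and verify, piece by piece, that the contributions from triples $(i,j,k)$ with all indices distinct, with exactly one coincidence (corresponding to edges of cardinality~$2$ acting either through $\mathbb{P}_n$'s diagonal or through the collapsed variable of the $2$-action), and with more coincidences, each assemble into an expression that is invariant under permutations of $f,g,h$; this is where the precise choice of the normalising constants in $\mathbb{P}_n$ (matched to the cardinalities of the contributing edge types) does the work. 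The overall statement is then obtained by summing these symmetric partial contributions.
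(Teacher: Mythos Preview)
Your approach is essentially the paper's: expand $\mathbb{E}[(\widetilde{A})_2[f,g]h]$ with respect to $\mathbb{P}_n$, split according to whether $(i,k)$ lies on the off-diagonal or the diagonal, cancel the $\deg(i,k)$ in $\mathbb{P}_n(\{(i,k)\})$ against the $\deg(i,k)$ in the denominator of $\widetilde{A}_{i,j,k}$, and then read off symmetry in $f,g,h$ from the symmetry of the \emph{unnormalised} adjacency tensor $A$. The paper displays exactly this computation and then asserts the equality $\mathbb{E}[(\widetilde{A})_2[f,g]h]=\mathbb{E}[(\widetilde{A})_2[f,h]g]=\mathbb{E}[(\widetilde{A})_2[h,g]f]$ in one line, treating the last step as immediate by analogy with the earlier lemma for $(T)_2$.

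The only difference is one of caution at the final step. You propose a case analysis by the cardinality of $\{i,j,k\}$ and suggest that the specific normalising constants in $\mathbb{P}_n$ are what make the pieces fit together. The paper does not do this: after cancellation the off-diagonal and diagonal parts carry \emph{independent} prefactors $\bigl(2\sum_{i\neq k}\deg(i,k)\bigr)^{-1}$ and $\bigl(2\sum_i\deg(i,i)\bigr)^{-1}$, so for the full expression to be symmetric each part must be symmetric on its own, and the paper treats this as evident without further comment. Your worry about cross-contamination from lower-cardinality edges (terms with $j\in\{i,k\}$ in the off-diagonal sum, terms with $j\neq i$ in the diagonal sum) is a genuine bookkeeping point that the paper's displayed equality glosses over; your plan to verify it by grouping according to $|\{i,j,k\}|$ is the natural way to close that gap.
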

\proof
The lemma follows from the following equality
$$\begin{aligned}
&\mathbb{E}[(\widetilde{A})_2[f,g]h]=\\
&\frac{1}{2}(\sum^n_{i,k=1, \ i\neq k}(\sum^n_{j=1}\frac{A_{i,j,k}}{deg(i,k)}f_{i,j}g_{j,k}+\sum^n_{j=1}\frac{A_{i,j,k}}{deg(i,k)}g_{i,j}f_{j,k}))h_{i,k}\frac{deg(i,k)}{2\sum^n_{i,k=1,\ i\neq k}deg(i,k)}+\\
& \frac{1}{2}(\sum^n_{i}(\sum^n_{j=1}\frac{A_{i,j,i}}{deg(i,i)}f_{i,j}g_{j,i}+\sum^n_{j=1}\frac{A_{i,j,i}}{deg(i,i)}g_{i,j}f_{j,i}))h_{i,i}\frac{deg(i,i)}{2\sum^n_{i=1}deg(i,i)})=\\&
\frac{1}{2}(\sum^n_{i,k=1, \ i\neq k}(\sum^n_{j=1}A_{i,j,k}f_{i,j}g_{j,k}+\sum^n_{j=1}A_{i,j,k}g_{i,j}f_{j,k}))h_{i,k}\frac{1}{2\sum^n_{i,k=1,\ i\neq k}deg(i,k)}+\\
& \frac{1}{2}(\sum^n_{i}(\sum^n_{j=1}A_{i,j,i}f_{i,j}g_{j,i}+\sum^n_{j=1}A_{i,j,i}g_{i,j}f_{j,i}))h_{i,i}\frac{1}{2\sum^n_{i=1}deg(i,i)})=\\&
\mathbb{E}[(\widetilde{A})_2[f,h]g]=\mathbb{E}[(\widetilde{A})_2[h,g]f].
\end{aligned}
$$\endproof
Therefore, the limit of a sequence of such operators will be also symmetric and positivity-preserving by Lemma \ref{LemmaClosedSymmetric} and Lemma \ref{LemmaClosedPosPres}.

\begin{remark}
    The previous lemma can be easily generalized for the case $k>3.$
\end{remark}

\section{Multi-action convergence, hypergraphons and P-variables}\label{SectHypergraphons}

From Theorem 8.2 and Lemma 8.3 in \cite{backhausz2018action} we have that dense simple graph sequences convergence (convergence in real-valued cut distance $\delta_{\square,\R}$) is equivalent to the action convergence of the sequence of the normalized adjacency matrices 

$$
\frac{A(G_n)}{|V(G_n)|}.
$$
and to the action convergence of real-valued graphons.

In this section, we present some ideas on the connection of multi-action convergence and other hypergraph limits for dense hypergraph sequences.

The theory of dense $r-$uniform hypergraph limits (hypergraphons) has been developed in \cite{hypergrELEK20121731} using techniques from model theory (ultralimits, ultraproducts) and successively translated in a more standard graph limit  language in \cite{HypergraphonsZhao}. A good presentation of the model-theoretic approach is given in \cite{RandomneLimitTow}. We briefly present here the theory of dense hypergraph limits, highlighting the similarities with action convergence, following the analytic presentation in \cite{HypergraphonsZhao}. 

We start with some notation. For any subset $A\subset [n]$, define $r(A)$ to be the collection of all nonempty subsets of $A$, and $r_{<}(A)$ to be the collection of all nonempty proper subsets of $A$. More generally, let $r(A, m)$ denote the collection of all nonempty subsets of $A$ of size at most $m$. So for instance, $r_{<}([k])=r([k], k-1)=r([k]\setminus \{k\})$. We will also use the shorthand $r[k]$ and $r_{<}[k]$ to mean $r([k])$ and $r_{<}([k])$ respectively.

Any permutation $\sigma$ of a set $A$ induces a permutation on $r(A, m)$. For a set $A=\{v_1,\ldots v_t\}\subset [k]$ of cardinality $t$ where $v_1<\ldots <v_t$, we indicate with $\mathrm{x}_A=(x_{v_1},\ldots,x_{v_t},x_{v_1v_2}\ldots,x_{v_1\ldots v_t} )$.   

The limit object of a sequence of $r-$uniform hypergraphs, i.e.\ an {\color{black} $r-$h}ypergraphon, is a symmetric measurable function
{\color{black}
$$
W:[0,1]^{2^r-2}\longrightarrow [0,1].
$$
$$
W(\mathrm{x}_{r[r]})=W(x_1,\ldots, x_r,x_{12},\ldots,x_{(r-1)r},\ldots x_{12\ldots r-1},\ldots,x_{2\ldots r})
$$
where symmetric means that 
\begin{equation*}
\begin{aligned}
   & W(x_1,\ldots, x_r,x_{12},\ldots,x_{(r-1)r},\ldots x_{12\ldots (r-1)},\ldots,x_{2\ldots r})=\\
  &  W(x_{\sigma(1)},\ldots, x_{\sigma{(r)}},x_{\sigma{(1)}\sigma{(2)}},\ldots,x_{\sigma(r-1)\sigma(r)},\ldots x_{\sigma(1)\sigma(2)\ldots \sigma(r-1)},\ldots,x_{\sigma(2)\ldots \sigma(r)})
    \end{aligned}
\end{equation*}

for every permutation $\sigma$ of $[r]$.} This might be surprising because, differently from the case of graphs ($r=2$), for $r>2$ the dimensionality of the $r-$th order adjacency tensor associated to an $r-$uniform hypergraph, $r$,  does not coincide with the dimensionality of the {\color{black} $r-$}hypergraphon, $2^r-2$.

The need for the additional coordinates, representing all proper subsets of $[r]$, is related to the need for suitable regularity partitions for hypergraphs \cite{GowersHypRegularity,RodlHyperReg1, RodlHyperReg2}  and it is moreover related to the hierarchy of notions of quasi-randomness in the case of $r-$uniform hypergraphs for $r>2$  \cite{RandomneLimitTow}.

This is also intuitively related to the various multi-$P$-operators associated with a tensor through its $s-$actions. In fact for $r=3$ the additional coordinates are again needed, for example, to differentiate the limits of the sequence of the Erdős–Rényi $3-$uniform hypergraphs $G(n,\frac{1}{8},2)$ (Example \ref{ERRandomHypergraph}) and the sequence of the $3-$uniform hypergraphs $T(n,\frac{1}{2})$ given by the triangles of the Erdős–Rényi graph (Example \ref{TriangRandHyp}).

We notice that similarly to how we associated graphons to $P-$operators we can associate hypergraphons to multi-$P-$operators:
{\color{black}

$$
\widehat{W}:L^{\infty}([0,1]^{2^{r-1}-2},Sym)\times \ldots \times L^{\infty}([0,1]^{2^{r-1}-2},Sym)\longrightarrow L^1([0,1]^{2^{r-1}-2},Sym)
$$
\begin{equation}\label{EqOpHypergraphon}
\begin{aligned}
\widehat{W}[g_1,\ldots, g_{(r-1)}]&(\mathrm{x}_{r([r]\setminus \{r\})})\\&=\frac{1}{(r-1)!}\sum_{\sigma}\int_{[0,1]^{2^r-2^{r-1}+1}}W(\mathrm{x}_{r[r]})\prod^{r-1}_{i=1} g_{\sigma(i)}(\mathrm{x}_{r_{[r]\setminus\{i\}}})\mathrm{d}\mathrm{x}_{A(r)}
\end{aligned}
\end{equation}
where $\sigma$ here is a permutation of $[r-1],$ $A(r)$ is the set of all the proper subsets of $[r]$ containing $r,$ and $Sym$ is the symmetric $\sigma-$algebra (i.e.\ the $\sigma-$algebra generated by the subsets of  $[0,1]^{2^{r-1}-2}$ that are invariant under the action of all permutations of $[r-1]$).
}
{\color{black}In particular, for $r=3,$ we have \begin{equation*}
\begin{aligned}
&\widehat{W}[g^{(1)}, g^{(2)}](x_1,x_2,x_{12})\\&=\frac{1}{2}\int_{[0,1]^{4}}W(x_1,x_2,x_3,x_{12},x_{13},x_{23})g^{(1)}(x_1,x_3,x_{13})g^{(2)}(x_2,x_3,x_{23})\mathrm{d}x_3\mathrm{d}x_{13}\mathrm{d}x_{23}\\
&+\frac{1}{2}\int_{[0,1]^{4}}W(x_1,x_2,x_3,x_{12},x_{13},x_{23})g^{(2)}(x_1,x_3,x_{13})g^{(1)}(x_2,x_3,x_{23})\mathrm{d}x_3\mathrm{d}x_{13}\mathrm{d}x_{23}\\
\end{aligned}
\end{equation*}

We observe that there are promising similarities between the action convergence of hypergraphons and the action convergence of the $(r-1)-$action of the adjacency tensor.

Let's consider for example the hypergraphon,
$$
W(x_1,x_2,x_3,x_{12},x_{13},x_{23})=\begin{cases}
    1 \ \text{ if } 0\leq x_{12},x_{13},x_{23}\leq \frac{1}{2}\\
    0 \ \text{ else}
\end{cases}
$$

that is the limit of the sequence of hypergraphs $T(n,\frac{1}{2})$ given by the triangles of the Erdős–Rényi random graph  (see \cite{HypergraphonsZhao} for example) and the action convergence limit of the $2-$action $(B_n)_2$ of the sequence of tensors $(B_n)_n$ obtained normalizing the adjacency tensors of the same hypergraphs, i.e.\ $B_n=\frac{A(T(n,\frac{1}{2}))}{n}$ (recall Example \ref{ExamCompERHyp}),  we have, for example, that
$$ 
\mathcal{L}(\mathbbm{1}_{\Omega_n},\mathbbm{1}_{\Omega_n},(B_n)_2[\mathbbm{1}_{\Omega_n},\mathbbm{1}_{\Omega_n}])\rightarrow  \frac{1}{2}\delta_{(1,1,0)}+\frac{1}{2}\delta_{(1,1,\frac{1}{4})}=\mathcal{L}(\mathbbm{1}_{\Omega},\mathbbm{1}_{\Omega},\widehat{W}[\mathbbm{1}_{\Omega},\mathbbm{1}_{\Omega}])
$$ also if we take the set $S_n$ to be the {\color{black}(symmetric) set of}pairs that correspond to edges of $G(n,\frac{1}{2})$. Then also   
$$ \mathcal{L}(\mathbbm{1}_{S_n},\mathbbm{1}_{S_n},(B_n)_2[\mathbbm{1}_{S_n},\mathbbm{1}_{S_n}])\rightarrow  \mathcal{L}(\mathbbm{1}_{[0,1]^2\times [0,\frac{1}{2}]},\mathbbm{1}_{[0,1]^2\times [0,\frac{1}{2}]},\widehat{W}[\mathbbm{1}_{[0,1]^2\times [0,\frac{1}{2}]},\mathbbm{1}_{[0,1]^2\times [0,\frac{1}{2}]}])
$$
and, similarly,

$$\mathcal{L}(\mathbbm{1}_{S^c_n},\mathbbm{1}_{S^c_n},(B_n)_2[\mathbbm{1}_{S^c_n},\mathbbm{1}_{S^c_n}])\rightarrow \mathcal{L}(\mathbbm{1}_{[0,1]^2\times [\frac{1}{2},1]},\mathbbm{1}_{[0,1]^2\times [\frac{1}{2},1]},\widehat{W}[\mathbbm{1}_{[0,1]^2\times [\frac{1}{2},1]},\mathbbm{1}_{[0,1]^2\times [\frac{1}{2},1]}])$$
and 

$$\mathcal{L}(\mathbbm{1}_{S_n},\mathbbm{1}_{S^c_n},(B_n)_2[\mathbbm{1}_{S_n},\mathbbm{1}_{S^c_n}])\rightarrow \mathcal{L}(\mathbbm{1}_{[0,1]^2\times [0,\frac{1}{2}]},\mathbbm{1}_{[0,1]^2\times [\frac{1}{2},1]},\widehat{W}[\mathbbm{1}_{[0,1]^2\times [0,\frac{1}{2}]},\mathbbm{1}_{[0,1]^2\times [\frac{1}{2},1]}]).$$

{\color{black}
Moreover, for any two $3-$hypergraphons $W$ and $U$ we can consider the multi-action convergence metric $d_M$ between the associated multi-$P-$operators $\widehat{W}$ and $\widehat{U}$ defined in equation \eqref{EqOpHypergraphon}. In particular, in this case, for the multi-$P-$operators $\widehat{W}$, equation \eqref{EqConstructMultiAct} in the construction of the action convergence metric is
$$
\begin{aligned}
&\mathcal{L}(g^{(1)}_1,\ldots, g^{(1)}_k,g^{(2)}_1,\ldots, g^{(2)}_k,\widehat W[g_1^{(1)},g_1^{(2)}],\ldots,\widehat W[g_k^{(1)},g_k^{(2)}]).
\end{aligned}$$
where, for $j\in [k],$ we consider $g^{(1)}_j,g^{(2)}_j\in L_{[-1,1]}^{\infty}(\Omega_1\times \Omega_1 \times \Omega_2).$

From Lemma \ref{limspdm}  and Lemma \ref{LemmCutInf1} we also obtain the following estimate.

\begin{lemma}
For any two $3-$hypergraphons $W$ and $U$ and the associated multi-$P-$operators $\widehat{W}$ and $\widehat{U}$ defined in equation \eqref{EqOpHypergraphon} we have the following inequality $$\begin{aligned}
    d_M(\widehat{W},\widehat{U})&\leq {\color{black} 12}(\|W-U\|_{\square_{2}})^{1/2}
\end{aligned}$$
where  for a linear combination of $3-$hypergraphons $V$ 
$$\begin{aligned}\|V\|_{\square_{2}}=\sup_{g_1,g_2,g_3}&\left|\int_{[0,1]^6}V(x_1,x_2,x_3,x_{12},x_{13},x_{23})g_1(x_1,x_2,x_{12})g_2(x_2,x_3,x_{23})g_3(x_1,x_3,x_{13})\mathrm{d}x_1\right.\\
    &\left.\mathrm{d}x_2\mathrm{d}x_3\mathrm{d}x_{12}\mathrm{d}x_{13}\mathrm{d}x_{23}\right|.\end{aligned}$$ where the supremum is taken over measurable $g_i:[0,1]^3\rightarrow [0,1]$ for every $i\in [3]$ such that $g_i(x_1,x_2,x_{12})=g_i(x_2,x_1,x_{12}).$
\end{lemma}

\begin{remark}
    More generally for two $r-$hypergraphons $W$ and $U$ we have the following bound for the multi-action convergence distance $d_M$ between the multi-$P-$operators $\widehat{W}$ and $\widehat{U}$ defined in equation \eqref{EqOpHypergraphon}:
    \begin{equation*}
    d_M(\widehat{W},\widehat{U})\leq {\color{black} 3 \cdot 2^{r-1}}(\|W-U\|_{\square_{r-1}})^{1/2}
\end{equation*}

where $\|\cdot\|_{\square_{r-1}}$ is the $(r-1)-$cut norm from Definition 4.3 in \cite{HypergraphonsZhao}.
\end{remark}

In particular, we obtain the following corollary from the previous lemma and remark.

\begin{corollary}
Hypergraphon convergence in the sense of Definition 6.6 of \cite{HypergraphonsZhao} (Partitionable convergence) implies action convergence of hypergraphons (interpreted as multi-$P-$operators as in \eqref{EqOpHypergraphon}). Moreover, the limits have to be compatible.     
\end{corollary}

{\color{black}

We anticipate a deeper connection between multi-action convergence, $P-$variables convergence (see Section 9.4 in \cite{zucal2024probabilitygraphonspvariablesequivalent}) and convergence of hypergraphons (Definition 6.6 in \cite{HypergraphonsZhao}) that we will explore in future work. 

{\color{black} We briefly sketch some motivating ideas here.

Let's denote $\Omega_1=\Omega_2=[0,1]$ for every $i\in [6].$ Let $W$ be a hypergraphon and $\widehat W$ its multi-$P-$operator representation. Observe, in particular, that we can construct also sets of measures, similarly to as done in Section \ref{SecMultiActConv} (see equation \eqref{EqConstructMultiAct}), constructing this time probability measures out of the random vectors $Y$ from $[0,1]^3=\Omega_1\times \Omega_1 \times \Omega_2$ to $\R^{7k}$

$$
\begin{aligned}
&Y(x_1,x_2,x_{12})\\
&=(f^{(1)}_1(x_1),f^{(1)}_1(x_2),\ldots, f^{(1)}_k(x_1),f^{(1)}_k(x_2),
g^{(1)}_1(x_1,x_2,x_{12}),\ldots, g^{(1)}_k(x_1,x_2,x_{12})\\
&\hspace{0.4 cm}f^{(2)}_1(x_1),f^{(2)}_1(x_2),\ldots,f^{(2)}_k(x_1),f^{(2)}_k(x_2) ,g^{(2)}_1(x_1,x_2,x_{12}),\ldots, g^{(2)}_k(x_1,x_2,x_{12})\\
&\hspace{0.4 cm}\widehat W[g_1^{(1)},g_1^{(2)}](x_1,x_2,x_{12}),\ldots,\widehat W[g_k^{(1)},g_k^{(2)}](x_1,x_2,x_{12})).
\end{aligned}$$
where, for $j\in [k],$ we consider $g^{(1)}_j,g^{(2)}_j\in L_{[-1,1]}^{\infty}(\Omega_1\times \Omega_1 \times \Omega_2)$ as before and we additionally consider $f^{(1)}_j,f^{(2)}_j\in L^{\infty}_{[-1,1]}(\Omega_1).$ 

Therefore, one can also define a metric for hypergraphons considering the Hausdorff metric on the space of measures as in Section \ref{SecMultiActConv}, recall Definition \ref{DefActMetric}. We observe that this metric works well only for dense hypergraph sequences. We expect this convergence to be equivalent to hypergraphon convergence (as defined in Definition 6.6 in \cite{HypergraphonsZhao}). Notably, this sketched convergence trivially implies multi-action convergence for hypergraphons. Specifically, if the action convergence limits of two sequences of hypergraphons differ, the limits under this modified convergence will also differ. Therefore, we expect action convergence to serve as a useful benchmark for understanding hypergraphon convergence. We have demonstrated many desirable properties for action convergence, which suggests (in some cases directly implies) that these properties also apply to the alternative convergence described above. }

Moreover, the convergence just outlined can be viewed as a contraction of the extension of $P$-variables to hypergraphs (as discussed in Section 9.4 in \cite{zucal2024probabilitygraphonspvariablesequivalent}). Recall that in the case of real-valued graphons, action convergence is equivalent to convergence in the real-valued cut distance, which can be considered a contraction of the $P$-variable metric, {\color{black} see Definition 4.19, Corollary 7.9.1 and Lemma 7.9 in \cite{zucal2024probabilitygraphonspvariablesequivalent}} (or equivalently, the unlabelled cut distance for probability graphons, {\color{black} see also \cite{abraham2023probabilitygraphons} and \cite{zucal2024probabilitygraphonsrightconvergence})}. 

As already said, we will compare these convergence notions in detail in future work. {\color{black} We expect/conjecture the equivalence of the convergence formulated by Yufei Zaho (Definition 6.6 in \cite{HypergraphonsZhao}) and the modified version of action convergence sketched above for hypergraphons.}}

}}

\section*{Appendix (technical lemmas)}\label{SecAppendixTech}

For completeness, we collect here a series of lemmas proven in  \cite{backhausz2018action} that we used extensively throughout our work.

We start with an upper-bound on the Lévy–Prokhorov distance of the distribution of two random variables

\begin{lemma}[Lemma 13.1 in \cite{backhausz2018action}]\label{coupdist} Let $X,Y$ be two jointly distributed $\mathbb{R}^k$-valued random variables. Then $$d_{\mathcal{LP}}(\mathcal{L}(X),\mathcal{L}(Y))\leq \tau(X-Y)^{1/2}k^{3/4},$$

where $\tau$ is defined as in \eqref{eqn:tau}.
\end{lemma}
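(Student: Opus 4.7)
The plan is to exploit the built-in coupling between $X$ and $Y$ (they are already jointly distributed, so there is no need to invoke Strassen's theorem to produce one) and turn a moment bound on $\|X-Y\|$ into a bound on $d_{\rm LP}(\mathcal{L}(X),\mathcal{L}(Y))$. The backbone is the standard principle that for any jointly distributed $\R^k$-valued random vectors $X,Y$ and any $\varepsilon>0$ with $\P(\|X-Y\|_2>\varepsilon)\leq\varepsilon$, one has $d_{\rm LP}(\mathcal{L}(X),\mathcal{L}(Y))\leq\varepsilon$. I would verify this by noting that for any Borel set $U\subseteq\R^k$, the event $\{X\in U\}$ is contained in $\{Y\in U^\varepsilon\}\cup\{\|X-Y\|_2>\varepsilon\}$, which gives $\P(X\in U)\leq \P(Y\in U^\varepsilon)+\varepsilon$, and symmetrically with the roles of $X$ and $Y$ interchanged.

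Next I would produce the required tail estimate via coordinatewise Markov and a union bound. Writing $\tau:=\tau(X-Y)$, for every $i\in[k]$ we have $\E|X_i-Y_i|\leq\tau$, so Markov's inequality yields $\P(|X_i-Y_i|>\delta)\leq\tau/\delta$ for each coordinate. A union bound then gives $\P(\|X-Y\|_\infty>\delta)\leq k\tau/\delta$, and the inclusion $\|\cdot\|_2\leq\sqrt{k}\,\|\cdot\|_\infty$ converts this into $\P(\|X-Y\|_2>\sqrt{k}\,\delta)\leq k\tau/\delta$.

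Finally I would optimise the free parameter. Setting $\varepsilon:=\sqrt{k}\,\delta$, the required inequality $k\tau/\delta\leq\sqrt{k}\,\delta$ reduces to $\delta^2\geq\sqrt{k}\,\tau$, so the choice $\delta:=k^{1/4}\tau^{1/2}$ is admissible and produces $\varepsilon=\sqrt{k}\cdot k^{1/4}\tau^{1/2}=k^{3/4}\tau^{1/2}$. Combined with the coupling principle from the first step, this delivers $d_{\rm LP}(\mathcal{L}(X),\mathcal{L}(Y))\leq k^{3/4}\tau(X-Y)^{1/2}$.

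There is no genuine obstacle in this argument; the only item that requires care is balancing the $\sqrt{k}$ incurred when passing from $\|\cdot\|_\infty$ to $\|\cdot\|_2$ against the factor of $k$ coming from the union bound, so that the exponent of $k$ in the final estimate matches the stated $3/4$. A marginally tighter constant (with $k^{1/2}$ in place of $k^{3/4}$) could be obtained by skipping the union bound and applying Markov directly to $\E\|X-Y\|_2\leq\E\|X-Y\|_1\leq k\,\tau$, but the route sketched above reproduces the constant exactly as claimed.
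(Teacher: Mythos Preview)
Your argument is correct. Note that the present paper does not actually prove this lemma: it is only quoted in the Appendix as Lemma~13.1 of \cite{backhausz2018action}, so there is no in-paper proof to compare against. Your route---the elementary coupling principle $\P(\|X-Y\|_2>\varepsilon)\leq\varepsilon\Rightarrow d_{\rm LP}\leq\varepsilon$, followed by coordinatewise Markov, a union bound, the passage $\|\cdot\|_2\leq\sqrt{k}\,\|\cdot\|_\infty$, and optimisation of the free parameter---is the standard argument and reproduces the stated constant $k^{3/4}$ exactly. The only cosmetic omission is the trivial case $\tau(X-Y)=0$, where $X=Y$ a.s.\ and the bound is immediate.
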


A direct consequence of the previous statement is the following Lemma.

\begin{lemma}[Lemma 13.2 in \cite{backhausz2018action}\label{coupdist2}] Let $v_1,v_2,\dots,v_k$ and $w_1,w_2,\dots,w_k$ be in $L^1(\Omega)$ for some probability space $\Omega$. Let $m:=\max_{i\in [k]} \|v_i-w_i\|_1$. Then
$$d_{\mathcal{LP}}(\mathcal{L}(v_1,v_2,\dots,v_k),\mathcal{L}(w_1,w_2,\dots,w_k))\leq m^{1/2}k^{3/4}.$$
\end{lemma}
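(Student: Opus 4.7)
The plan is to obtain this lemma as an immediate application of Lemma \ref{coupdist}, viewing the $L^1$ functions $v_i, w_i$ as coordinates of jointly distributed random vectors on the underlying probability space $\Omega$. Concretely, I would set $X:=(v_1,v_2,\dots,v_k)$ and $Y:=(w_1,w_2,\dots,w_k)$, so that $X$ and $Y$ are $\mathbb{R}^k$-valued random variables defined on the same probability space $\Omega$, hence jointly distributed with $\mathcal{L}(X)=\mathcal{L}(v_1,\dots,v_k)$ and $\mathcal{L}(Y)=\mathcal{L}(w_1,\dots,w_k)$.

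Next I would compute $\tau(X-Y)$ directly from its definition in \eqref{eqn:tau}. Since $X-Y=(v_1-w_1,\dots,v_k-w_k)$, the $i$-th marginal of $\mathcal{L}(X-Y)$ is the law of the real-valued random variable $v_i-w_i$, whose first absolute moment is precisely $\mathbb{E}[|v_i-w_i|]=\|v_i-w_i\|_1$. Therefore
$$\tau(X-Y)=\max_{1\leq i\leq k}\int_{\mathbb{R}^k}|x_i|\,d\mathcal{L}(X-Y)=\max_{1\leq i\leq k}\|v_i-w_i\|_1=m.$$

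Applying Lemma \ref{coupdist} to the pair $(X,Y)$ then yields
$$d_{\rm LP}(\mathcal{L}(v_1,\dots,v_k),\mathcal{L}(w_1,\dots,w_k))=d_{\rm LP}(\mathcal{L}(X),\mathcal{L}(Y))\leq \tau(X-Y)^{1/2}k^{3/4}=m^{1/2}k^{3/4},$$
which is exactly the claimed bound. There is essentially no obstacle here: the entire content is the observation that $v_1,\dots,v_k$ and $w_1,\dots,w_k$ can be coupled on $\Omega$ itself (they are already defined there), so the joint coupling required by Lemma \ref{coupdist} is available for free, and $\tau$ of the difference reduces coordinate-wise to the $L^1$ norms whose maximum is $m$.
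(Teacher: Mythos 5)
Your proof is correct and follows exactly the route the paper intends: the paper presents this lemma as a direct consequence of Lemma \ref{coupdist}, obtained by taking $X=(v_1,\dots,v_k)$ and $Y=(w_1,\dots,w_k)$ as jointly distributed random vectors on $\Omega$ and noting that $\tau(X-Y)=\max_i\|v_i-w_i\|_1=m$. Nothing is missing.
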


The next lemma is a general probabilistic result about limits of random variables, products and expectations.

\begin{lemma}[Lemma 13.4 in \cite{backhausz2018action}]\label{closedlem2} Let $q\in (1,\infty)$.  Let $\{(X_i,Y_i)\}_{i=1}^\infty$ be a sequence of pairs of jointly distributed real-valued random variables such that $X_i\in [-1,1]$ and $\mathbb{E}[|Y_i|^q]\leq c<\infty$ for some $c\in\mathbb{R}^+$. Assume that the distributions of $(X_i,Y_i)$ weakly converge to some probability distribution $(X,Y)$ as $i$ goes to infinity. Then $\mathbb{E}[|Y|^q]\leq c$ and $$\lim_{i\to\infty} \mathbb{E}[X_iY_i]=\mathbb{E}[XY].$$ 
\end{lemma}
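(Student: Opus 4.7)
The plan is to treat the two conclusions separately, since the first is a standard lower-semicontinuity statement while the second requires controlling the unboundedness of the product.

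First I would establish $\mathbb{E}(|Y|^q)\leq c$ by the Portmanteau theorem. The weak convergence of the joint distributions $(X_i,Y_i)\to (X,Y)$ implies the weak convergence of the marginals $Y_i\to Y$. Since $y\mapsto |y|^q$ is continuous and non-negative, the Portmanteau theorem gives $\mathbb{E}(|Y|^q)\leq \liminf_{i\to\infty}\mathbb{E}(|Y_i|^q)\leq c$. The same argument, applied to the continuous non-negative function $(x,y)\mapsto (1-x^2)_{+}$ or by preservation of support under weak convergence, shows that $X$ is $[-1,1]$-valued almost surely.

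For the second conclusion the obstacle is that $(x,y)\mapsto xy$ is not bounded, so weak convergence does not directly yield convergence of expectations. I would introduce a continuous truncation $\phi_M:\mathbb{R}\to\mathbb{R}$ with $\phi_M(y)=y$ for $|y|\leq M$ and $|\phi_M(y)|\leq M$ otherwise, and then run a three-term approximation argument:
\begin{equation*}
\bigl|\mathbb{E}(X_iY_i)-\mathbb{E}(XY)\bigr|\leq \bigl|\mathbb{E}(X_iY_i)-\mathbb{E}(X_i\phi_M(Y_i))\bigr|+\bigl|\mathbb{E}(X_i\phi_M(Y_i))-\mathbb{E}(X\phi_M(Y))\bigr|+\bigl|\mathbb{E}(X\phi_M(Y))-\mathbb{E}(XY)\bigr|.
\end{equation*}
The middle term tends to $0$ as $i\to\infty$ for any fixed $M$, because $(x,y)\mapsto x\phi_M(y)$ is bounded and continuous on $\mathbb{R}^2$ and the pairs $(X_i,Y_i)$ converge weakly to $(X,Y)$. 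The first and third terms are the truncation errors, which I would control uniformly in $i$ using $|X_i|\leq 1$ together with Hölder's and Markov's inequalities:
\begin{equation*}
\bigl|\mathbb{E}(X_iY_i)-\mathbb{E}(X_i\phi_M(Y_i))\bigr|\leq \mathbb{E}\bigl(|Y_i|\mathbb{1}_{|Y_i|>M}\bigr)\leq \mathbb{E}(|Y_i|^q)^{1/q}\,\mathbb{P}(|Y_i|>M)^{1-1/q}\leq \frac{c}{M^{q-1}},
\end{equation*}
and analogously for $Y$ using the first conclusion $\mathbb{E}(|Y|^q)\leq c$. Given $\varepsilon>0$, I would first choose $M$ so large that both truncation errors are below $\varepsilon/3$, and then choose $i$ large enough that the middle term is below $\varepsilon/3$, completing the proof.

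The main obstacle, as usual for this type of result, is bridging weak convergence to convergence of an unbounded integrand; this is exactly where the hypothesis $q>1$ and the uniform $L^q$-bound on $Y_i$ enter, providing the uniform integrability of $\{X_iY_i\}$ needed to make the truncation error vanish uniformly in $i$. If $q=1$ were allowed the Hölder bound would degenerate, which is consistent with the sharpness of the exponent assumption.
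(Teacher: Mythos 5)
Your proof is correct, and since the paper only states this as Lemma 13.4 of the cited Backhausz--Szegedy work without reproducing a proof, there is nothing in-paper to diverge from; your truncation/uniform-integrability argument is the standard (and essentially the original) one. Two small imprecisions are worth fixing. First, $(x,y)\mapsto x\phi_M(y)$ is not bounded on all of $\mathbb{R}^2$ (the $x$-factor is unbounded), so to invoke weak convergence for the middle term you should test against $\phi_1(x)\phi_M(y)$ instead; since $X_i\in[-1,1]$ almost surely and, as you note, $X\in[-1,1]$ almost surely, this changes none of the expectations. Second, the function $(1-x^2)_{+}$ does not certify $X\in[-1,1]$: lower semicontinuity gives $\mathbb{E}\bigl[(1-X^2)_{+}\bigr]\leq\liminf_i\mathbb{E}\bigl[(1-X_i^2)_{+}\bigr]$, which is vacuous. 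You want the function $(|x|-1)_{+}$, which vanishes on $[-1,1]$ and yields $\mathbb{E}\bigl[(|X|-1)_{+}\bigr]\leq 0$, or equivalently the closed-set form of Portmanteau applied to $[-1,1]\times\mathbb{R}$ --- which is the ``preservation of support'' alternative you already mention. With those repairs the argument is complete: the Hölder--Markov bound $c/M^{q-1}$ is computed correctly and is exactly where $q>1$ is used.
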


We give a last technical upper bound for the Lévy–Prokhorov distance of measures generated by a $P-$operator through specific random variables. This is a minor modification of Lemma 13.6 in \cite{backhausz2018action}.

\begin{lemma}\label{applem2} Let $p\in [1,\infty)$ and let $A\in\mathcal{B}_r(\Omega)$ be a multi-$P$-operator. Let  $v_i$ and $w_i$ be functions in $L_{[-1,1]}^\infty(\Omega)$ for every $i\in [k]$. Then we have
$$d_{\mathcal{LP}}(\mathcal{D}_A(\{v_i\}_{i=1}^k),\mathcal{D}_A(\{w_i\}_{i=1}^k))\leq m^{1/2}((2d)^p+2^{p+1}d)^{1/(2p)}(2k)^{3/4},$$ where $m=\max\{1,(r-1)\|A\|_{p\ldots,p\to 1}\}$ and $d=\max_{i\in [k]}\{d_{\mathcal{LP}}(\mathcal{D}(v_i-w_i),\delta_0)\}$.
\end{lemma}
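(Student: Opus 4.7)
\textbf{Proof proposal for Lemma \ref{applem2}.} The plan is to view both measures $\mathcal{D}_A(\{v_i\}_{i=1}^k)$ and $\mathcal{D}_A(\{w_i\}_{i=1}^k)$ as the laws of two jointly distributed $\mathbb{R}^{2k}$-valued random vectors built on the same probability space $\Omega$, and then apply Lemma \ref{coupdist} directly. Specifically, I set
\begin{equation*}
X=(v_1,Av_1,\ldots,v_k,Av_k),\qquad Y=(w_1,Aw_1,\ldots,w_k,Aw_k),
\end{equation*}
so that $\mathcal{L}(X)=\mathcal{D}_A(\{v_i\}_{i=1}^k)$ and $\mathcal{L}(Y)=\mathcal{D}_A(\{w_i\}_{i=1}^k)$. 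Lemma \ref{coupdist} immediately gives
\begin{equation*}
d_{\rm LP}(\mathcal{L}(X),\mathcal{L}(Y))\leq \tau(X-Y)^{1/2}(2k)^{3/4},
\end{equation*}
and since $X-Y$ has $2k$ coordinates $v_i-w_i$ and $A(v_i-w_i)$ for $i\in[k]$, we have $\tau(X-Y)=\max_{i\in[k]}\max\{\|v_i-w_i\|_1,\|A(v_i-w_i)\|_1\}$. So everything reduces to obtaining uniform bounds on the two families of $L^1$ norms in terms of $d$.

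The core quantitative step is to convert the hypothesis $d_{\rm LP}(\mathcal{D}(v_i-w_i),\delta_0)\leq d$ into an $L^p$ estimate on the difference $v_i-w_i$. Since $\delta_0$ is a point mass, the definition of the Lévy–Prokhorov metric (applied to the open sets $\{|x|>\varepsilon\}$ whose $\varepsilon$-neighbourhoods still miss $0$) yields $\mathbb{P}(|v_i-w_i|>d)\leq d$ for every index $i$. Combining this tail bound with the pointwise bound $|v_i-w_i|\leq 2$ (coming from $v_i,w_i\in L^\infty_{[-1,1]}(\Omega)$), I split the integral:
\begin{equation*}
\|v_i-w_i\|_p^p=\int_{\{|v_i-w_i|\leq 2d\}}|v_i-w_i|^p\,d\mathbb{P}+\int_{\{|v_i-w_i|>2d\}}|v_i-w_i|^p\,d\mathbb{P}\leq (2d)^p+2^p\cdot 2d=(2d)^p+2^{p+1}d,
\end{equation*}
using $\mathbb{P}(|v_i-w_i|>2d)\leq 2d$ (a fortiori from the Lévy–Prokhorov condition) in the second term. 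Hence $\|v_i-w_i\|_p\leq\bigl((2d)^p+2^{p+1}d\bigr)^{1/p}$.

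With this $L^p$ bound in hand, I conclude by applying the two relevant inequalities for the norms appearing in $\tau(X-Y)$. For the $v_i-w_i$ coordinates, Jensen (or the standard $L^p$-inclusion on a probability space) gives $\|v_i-w_i\|_1\leq\|v_i-w_i\|_p$; for the image coordinates, the definition of the $(p,1)$-operator norm yields $\|A(v_i-w_i)\|_1\leq \|A\|_{p\to 1}\|v_i-w_i\|_p$. Both are dominated by $m\cdot\bigl((2d)^p+2^{p+1}d\bigr)^{1/p}$ where $m=\max\{1,\|A\|_{p\to 1}\}$, so $\tau(X-Y)\leq m\bigl((2d)^p+2^{p+1}d\bigr)^{1/p}$. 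Plugging back into the Lemma \ref{coupdist} estimate gives the announced bound.

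The only delicate point is the passage from a Lévy–Prokhorov distance to $\delta_0$ to a quantitative tail estimate, because the infimum in the definition of $d_{\rm LP}$ need not be attained; I would dispatch this by taking $\varepsilon \downarrow d$ and using monotonicity of measures on the decreasing sequence of open sets $\{|v_i-w_i|>\varepsilon\}$, which is routine. The rest of the proof is bookkeeping.
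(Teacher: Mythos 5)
Your proof is correct and is essentially the argument behind Lemma 13.6 of \cite{backhausz2018action}, which this paper only states without proof: convert the L\'evy--Prokhorov distance to $\delta_0$ into the tail bound $\mathbb{P}(|v_i-w_i|>2d)\le 2d$, deduce $\|v_i-w_i\|_p^p\le (2d)^p+2^{p+1}d$, push this through $\|\cdot\|_{p\to 1}$ (and $\|\cdot\|_1\le\|\cdot\|_p$) to control $\tau(X-Y)$, and finish with Lemma \ref{coupdist} applied to the $\mathbb{R}^{2k}$-valued vectors. All steps, including the limiting argument $\varepsilon\downarrow d$ for the tail estimate, check out, and the constants reproduce the stated bound exactly.
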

\proof
The proof is identical to the proof of Lemma 13.6 in \cite{backhausz2018action}, except that we use the properties of the multi-linear norm here.
\endproof

\textbf{Acknowledgements:} The author thanks Ágnes Backhausz, Tobias B\"ohle, Christian K\"uhn, Raffaella Mulas, Florentin M\"unch, Balázs Szegedy, {\color{black} Sjoerd van der Niet} and Chuang Xu for useful discussions. {\color{black} This work is part (in a slightly different form) of the author's PhD thesis.}
\medskip

\section*{References}

\bibliographystyle{plain}
\bibliography{biblio1}

\end{document}